\documentclass[11pt]{amsart}

\RequirePackage{iftex}
  \RequirePackage[T1]{fontenc}
  \RequirePackage[utf8]{inputenc}
  \RequirePackage{lmodern}
  \RequirePackage{textcomp}

\RequirePackage{microtype}

\RequirePackage[dvipsnames]{xcolor}
\definecolor{cornflowerblue}{rgb}{0.39,0.58,0.93}

\RequirePackage{mathtools}   % loads amsmath
\RequirePackage{amssymb}
\RequirePackage{amsthm}
\RequirePackage{thmtools}
\RequirePackage{thm-restate}
    \RequirePackage{stmaryrd}    % symbols like \llbracket
    \SetSymbolFont{stmry}{bold}{U}{stmry}{m}{n}
\RequirePackage{mathrsfs}    % \mathscr
\RequirePackage{esint}       % extended integral symbols
\RequirePackage{xfrac}
\RequirePackage{aligned-overset} % if used in the source
\RequirePackage{graphicx}
\RequirePackage{pdfpages}
\RequirePackage{comment}
\RequirePackage{enumerate}    % modern replacement for 'enumerate'
\RequirePackage[toc,page]{appendix}

\RequirePackage[marginparwidth=2cm]{geometry}
\RequirePackage[
  hidelinks,
  colorlinks=true,
  linkcolor=blue,
  citecolor=blue,
  backref
]{hyperref}
\RequirePackage[capitalize]{cleveref}

\RequirePackage[color=green!40,textwidth=1in]{todonotes}
\RequirePackage[normalem]{ulem}

\RequirePackage{mathtools} % loads amsmath
\RequirePackage{MnSymbol}  % defines \cupdot and \bigcupdot
\usepackage{stmaryrd}
\usetikzlibrary{arrows.meta,decorations.markings}

\tikzset{
  orientedarc/.style={
    very thick,
    postaction={decorate},
    decoration={
      markings,
      mark=at position 0.65 with {\arrow{Stealth[length=7pt,width=7pt]}}
    }
  }
}

\counterwithin{equation}{section}
\theoremstyle{definition}
\newtheorem{definition}{Definition}[section]

\newtheorem{remark}[definition]{Remark}
\newtheorem{assumption}{Assumption}
\theoremstyle{plain}
\newtheorem{theorem}[definition]{Theorem}
\newtheorem{proposition}[definition]{Proposition}

\newtheorem{lemma}[definition]{Lemma}

\newcommand\res{\mathop{\hbox{\vrule height 7pt width .3pt depth 0pt
\vrule height .3pt width 5pt depth 0pt}}\nolimits}
\def\XXint#1#2#3{{\setbox0=\hbox{$#1{#2#3}{\int}$ }
\vcenter{\hbox{$#2#3$ }}\kern-.6\wd0}}

\newcommand{\R}{\mathbb{R}}
\newcommand{\N}{\mathbb{N}}
\newcommand{\bbS}{\mathbb{S}}

\newcommand{\diam}{\mathrm{diam}}
\newcommand{\m}{\mathfrak{m}}
\newcommand{\ball}[2]{\mathbf{B}_{#2}\left(#1\right)}

\newcommand{\cH}{\mathcal{H}}

\newcommand{\spt}{\operatorname{spt}}

\newcommand{\bG}{\boldsymbol{\operatorname{G}}}

\newcommand{\bF}{\mathbf{F}}

\newcommand{\bB}{\mathbf{B}}

\newcommand{\bM}{\mathbf{M}}

\newcommand{\sF}{\mathscr{F}}

\newcommand{\graph}{\mathrm{graph}}

\def\a#1{\llbracket #1 \rrbracket}

\newcommand{\dive}{{\rm div}\,}
\newcommand{\dini}{\rm Dini}

\DeclareMathOperator*{\essinf}{ess\,inf}

    \newcommand{\half}{{\scriptscriptstyle{(1/2)}}}

\title[Flat boundary blowups]{Existence of flat blowups at boundary points of anisotropic minimizing hypercurrents}
\author{Michael Novack}
\address{University of Texas at Austin}
\email{michael.novack@math.utexas.edu}
\author{Reinaldo Resende}
\address{Florida State University}
\email{rresende@fsu.edu}

\begin{document}

\begin{abstract}
We consider $m$-dimensional currents in $\mathbb{R}^{m+1}$ that almost minimize an anisotropic energy whose integrand is continuous in the space variable and $C^{2,\dini}$ in the normal variable. We prove that if the boundary of such a current is differentiable, then the current admits a flat blowup at every boundary point. 
\end{abstract}

\maketitle
\vspace{-1cm}

\setcounter{tocdepth}{1}
\vspace{.5cm}
{  \hypersetup{linkcolor=black}
  \tableofcontents
}
\vspace{-1cm}

\section{Introduction}

In their groundbreaking work \cite{HS}, Hardt and Simon showed that boundary regularity for codimension-one area-minimizing currents is determined by their blowups. More precisely, let $T$ be an area-minimizing $m$-current in $\R^{m+1}$ with multiplicity-one boundary $\Gamma$ of class $C^{1,\alpha}$. They proved that, for every $p\in\Gamma$, $T$ admits a flat blowup at $p$ if, and only if, $T$ is regular near $p$. This equivalence reduces the boundary regularity problem to the study of blowups. They then proved that 
$$\text{every blowup of $T$ at a boundary point is flat,}$$
and hence that $T$ attaches regularly to $\Gamma$ at every point. 

For anisotropic energies, the problem has remained largely open, the basic obstruction being the absence of a monotonicity formula; see Remark \ref{remark:hardtsimon estimate} and Section \ref{remark:comparison with area}. For general boundary points, it was not known whether a flat blowup exists, nor whether the existence of one implies regularity. Indeed, Hardt and Simon \cite{HS} observe that for two-dimensional anisotropic minimizers in $\mathbb{R}^3$, ``at an arbitrary boundary point $a$, even the existence of an oriented tangent \emph{cone} is unknown,'' and we are not aware of any progress on this question. In this paper, we answer the blowup question in every dimension; namely, we prove that every boundary point of an anisotropic minimizer with multiplicity-one boundary supported on a differentiable manifold admits a flat blowup.

The existing regularity results for anisotropic minimizers are conditional on the behavior of the minimizer at the point in question. In the interior, where the theory is well developed, Schoen, Simon, and Almgren \cite{schoen1977regularity} proved that the singular set has vanishing $(m-2)$-dimensional Hausdorff measure, a bound which can be improved to $\mathcal{H}^{(m-2-\nu)}$-null for some $\nu>0$; cf. the discussion in \cite[Corollary 2.5]{figalli2017regularity}. In the other direction, Morgan \cite{morgan1991cone} produced an example of a three-dimensional minimizing cone in $\mathbb{R}^4$ with a singularity at the origin. At the boundary, Hardt \cite{hardt1977boundary} proved regularity at points where $\spt \,T$ has a supporting hyperplane, and Lin \cite{lin1985regularity}, and later Duzaar and Steffen \cite{duzaar2002optimal}, proved an $\varepsilon$-regularity theorem at points where the lower density is at most $1/2 + \varepsilon$. Our theorem, in contrast, gives an unconditional conclusion at the level of blowups: every boundary point admits a flat blowup.

We now state our main result and refer the reader to Section \ref{sec:prelim} for the precise definitions.

\begin{theorem}[Existence of a flat blowup at the boundary]\label{thm:main existence theorem intro}
    If $m\in \mathbb{N}$, $F$ is a uniformly elliptic integrand of class $C^{2,\dini}$, $T$ is an integer rectifiable $m$-current that is $(\bF,\omega)$-minimizing in $\bB_1$, and $\partial T$ is a multiplicity-one current whose support is a differentiable manifold containing $0$, then there exist $r_j\to 0$, $Q\in \mathbb{N}$, and a half-hyperplane $\Pi^+$ such that
    \begin{equation}\notag
      T_{0,r_j} \to Q\a{\Pi^+} - (Q-1) \a{-\Pi^+}\,.  
    \end{equation}
\end{theorem}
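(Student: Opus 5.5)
Without a monotonicity formula the density ratio $\|T\|(\bB_r)/r^m$ need not have a limit, so tangent cones are unavailable. I would work instead with compactness plus a selection argument over scales. The only monotone-type object available for free is the boundary structure: since $\spt\,\partial T$ is a differentiable manifold through $0$, the rescalings $(\partial T)_{0,r}$ converge to $\a{L}$ for an $(m-1)$-plane $L$ (with orientation). The almost-minimality gives uniform mass bounds $\|T_{0,r}\|(\bB_R)\le C R^m$ by comparison with cones, and $F(0,\cdot)$ being uniformly elliptic lets us freeze coefficients. So along any sequence $r_j\to0$, a subsequence of $T_{0,r_j}$ converges to an $\bF_0$-minimizing current $S$ (with $\bF_0=F(0,\cdot)$) in $\R^{m+1}$, with $\partial S=\a{L}$. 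The plan is to choose the scales cleverly so that $S$ is forced to be flat.

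\textbf{Key steps.} (1) \emph{Compactness and minimality of limits.} Prove that blowup limits are $\bF_0$-minimizing with boundary $\a{L}$. This needs the $\omega$-error and the spatial continuity of $F$ to vanish under rescaling, and needs lower semicontinuity plus a standard slicing/gluing construction. (2) \emph{Choice of scales via an extremal quantity.} Introduce a functional that measures how far $T_{0,r}$ is from a half-plane configuration, for instance the normalized energy $\Theta(r)=\bF(T,\bB_r)/r^m$ or an $L^2$-type height excess relative to planes containing $L$. Choose $r_j$ along which $\Theta$ attains (nearly) its $\liminf$, or more precisely along which $\Theta(r_j)\le \Theta(s r_j)+o(1)$ for all $s\in[\delta,1]$. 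For such ``minimal scale'' sequences, the limit $S$ satisfies $\Theta_S(\rho)\ge \Theta_S(1)$ for $\rho\le 1$. (3) \emph{Rigidity.} Show that an entire $\bF_0$-minimizer $S$ with $\partial S=\a{L}$ and this extremal property is $Q\a{\Pi^+}-(Q-1)\a{-\Pi^+}$. For this I would exploit the codimension-one structure: decompose $S=\sum_k \partial\a{E_k}$-type layers plus the boundary sheet (via the integral decomposition into boundaries of nested sets, corrected by a half-plane to absorb $\partial S$). Each layer is then an $\bF_0$-minimizing boundary. The multiplicities $Q$ and $Q-1$ come from the one layer carrying the boundary and the $Q-1$ closed pieces on either side. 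Flatness of each piece would follow from Hardt's supporting-hyperplane theorem \cite{hardt1977boundary} combined with a convexity/maximum-principle argument for the extremal layer. The outermost layer has a supporting half-space at $L$, hence is regular and, by homogeneity-free Liouville for $\bF_0$-minimizing graphs with linear growth, planar.

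\textbf{Main obstacle.} I expect step (3) to be hardest, in particular turning the extremal-scale property into flatness without homogeneity of $S$. What I would try first is a second blowup: blow $S$ down at infinity (again choosing extremal scales) and use a barrier argument with the foliation of $\R^{m+1}$ by half-hyperplanes bounded by $L$. Let $\Pi^+$ be the first half-hyperplane (in angular order) touching $\spt S$. By the strong maximum principle for $\bF_0$-minimizers \cite{hardt1977boundary}, $\spt S\cap$ one side collapses onto $\Pi^+$. Iterating this peels off the flat layers and identifies the remaining multiplicities through $\partial S=\a{L}$. This gives a flat iterated blowup, and a diagonal argument then yields a sequence $r_j$ for $T$ itself.
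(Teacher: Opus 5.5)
Your proposal has a genuine gap: nothing in it actually forces flatness, and the two ingredients that do this in the paper are missing.

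\textbf{Steps (2)--(3).} These cannot work as stated. Selecting scales along which $\bF(T\res\bB_r)/r^m$ is nearly minimal gives a limit $S$ with $\Theta_S(\rho)\geq\Theta_S(1)$. Without a monotonicity formula, however, there is no rigidity statement that turns this inequality into homogeneity, let alone flatness. Even for area, where monotonicity holds, your extremal property is satisfied by every cone, so it cannot distinguish flat configurations. Hardt and Simon get flatness of boundary tangent cones from a maximum principle on the polar angle, not from density considerations. Likewise, ``the outermost layer has a supporting half-space at $L$'' is exactly what has to be proved. Hardt's theorem only applies once you have it, and the Liouville step would also need a global graph representation that nothing in your argument supplies. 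A smaller point: upper mass bounds at boundary points do not follow from cone comparison alone. The paper first proves a local multiplicity bound (Lemma~\ref{lemma:local upper bf bounds}).

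\textbf{The fallback in your last paragraph.} The idea there is the right one: take the first half-hyperplane, in angular order, meeting $\spt S$, apply a maximum principle, then iterate and diagonalize. This is the Hardt--Simon idea the paper follows. As stated, though, it fails for two reasons.

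\emph{First, the winding must be controlled.} Every $M_s$ meets $\spt S$ along $L$, so angular order has to be measured by the lifted, $\R$-valued polar angle $\boldsymbol{\theta}$ on ${\rm Reg}_i S$. A priori $\spt S$ can wind around $L$ infinitely often, and then there is no first half-plane. In the isotropic case boundedness comes from conical tangent cones and regularity of the link. Here the paper needs a new argument (Theorem~\ref{lemma:trapping lemma}). Lower density bounds push all but finitely many windings into an arbitrarily thin neighborhood of $L$. Then a cylinder tangent to $L$ traps $\{\boldsymbol{\theta}>2\pi N\}\cap\bB_5$ in a half-space. This cylinder is fattened near its ends so that it contains the slice boundaries, and it is mean convex for both $F$ and $\nu\mapsto F(-\nu)$ to handle orientation mismatches at touching points.

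\emph{Second, the extremal angle need not be attained.} For a non-homogeneous minimizer it is typically only approached toward $L$ at smaller and smaller scales, so an interior strong maximum principle has nothing to touch. The paper instead sets $\boldsymbol{\theta}_{\rm min}=\lim_{r\to 0}\inf_{\bB_r}\boldsymbol{\theta}$ for $T$ itself. It then uses a parametric Hopf lemma for $\bF$-minimizers (Proposition~\ref{lemma:hopf currents}), with constants made uniform in the tilt by compactness, to show that every blowup contains $M_{\rm min}$. Otherwise the rescaled currents would leave $L$ at a definite angle above $M_{\rm min}$, contradicting the definition of $\boldsymbol{\theta}_{\rm min}$.

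\textbf{Closing the iteration.} Peeling off half-planes only produces an open book. To finish you still need two things. One is termination of the iteration, which comes from upper mass bounds. The other is the two-dimensional convexity argument of Lemma~\ref{lemma: resolution in 2d}: a positively and a negatively oriented ray that are not opposite can be short-cut, strictly lowering $\bF$. Together these force the final iterated blowup to be $Q\a{\Pi^+}-(Q-1)\a{-\Pi^+}$.
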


\begin{remark}[Nonautonomous integrands] 
Theorem \ref{thm:main existence theorem intro} also applies to energies with nonautonomous integrands, that is,  $F:\R^{m+1}\times\R^{m+1}\to\R$ that is continuous in the first variable and uniformly elliptic of class $C^{2,\dini}$ in the second variable. Indeed, any $(\bF, \omega)$-minimal current $T$ is $\left(\bF_0, \omega+C{\omega}_0\right)$-minimal where $F_0(\nu):=F(0, \nu)$, $\omega_0(r) := \sup_{\nu\in\bbS^m}\{|F(x,\nu) - F(y,\nu)|: |x-y|\leq r\}$, and $C>1$ is a universal constant. Since $F$ is continuous in the space variable, $\omega+C\omega_0$ is an admissible error term, and so the conclusion of Theorem \ref{thm:main existence theorem intro} holds in this setting.
\end{remark}

\begin{remark}[Higher multiplicity boundaries]
    If $\partial T$ has multiplicity $P$ for some $P>1$, then an argument of White \cite{white1983regularity} shows that $T$ decomposes on $\bB_1$ into $P$ anisotropic area minimizers $T_1,\dots, T_P$ with multiplicity one boundaries, and so Theorem \ref{thm:main existence theorem intro} applies separately to each $T_i$. If one of the $T_i$'s has a two sided blowup, then the blowup of $T$ is a two-sided plane with multiplicities $Q$ and $Q-P$ on each half plane, whereas if all $T_i$'s have one-sided blowups, this blowup will be an open book with sheets that may or may not coincide.
\end{remark}

\begin{remark}[On full boundary regularity]\label{remark:hardtsimon estimate}
When $Q=1$ in Theorem \ref{thm:main existence theorem intro}, the subsequential blowup is a half-hyperplane with multiplicity one (a ``one-sided'' blowup) and the result \cite{duzaar2002optimal} of Duzaar and Steffen applies, and $\spt \, T$ is an anisotropic minimal surface with boundary in a neighborhood of $0$. Complete boundary regularity would therefore follow from an $\varepsilon$-regularity theorem when $Q>1$ and a blowup is ``two-sided''. In the case of the area functional, the argument is rather delicate and relies in a crucial way on the remainder term in the monotonicity formula \cite{HS}. Addressing this issue and establishing the {full} boundary regularity program in the anisotropic case remains {open}.
\end{remark}

\subsection{Outline of the proof}\label{subsec: intro explanation BU argument}

The existence of nontrivial ``classical'' blowups, with no need for Preiss blowups, follows from upper mass ratio bounds and the fact that the boundary is induced by a differentiable manifold; see Remark \ref{remark:reduction to flat case}. Since the set of blowups at zero is closed under further blowups, it therefore suffices to consider the case where, up to a rotation, $T$ is an integer rectifiable $m$-current with multiplicity one boundary supported in $\R^{m-1}\times\{0\}^2 := L$ that minimizes $\bF$ in $\R^{m+1}$. The proof consists of two main steps: a barrier argument that precludes $\spt \,T$ from wrapping around $L$ infinitely often, and a blowup argument that utilizes a parametric Hopf lemma to extract a limiting tangent plane.

\subsubsection{Boundedness of the polar angle function.} Let $M$ be the oriented half-hyperplane $\{x_m>0\}\cap \{x_{m+1}=0\}$ with boundary $L$. The key quantity is a polar angle function $\boldsymbol{\theta}$ (Definition \ref{def:polar angle}), {defined on the interior regular set ${\rm Reg}_i T\subset\spt T$,} which counts how many times $T$ has wound around $L$ {in the $(x_m,x_{m+1})$-variables} (relative to a fixed base point). The reader may keep in mind the picture of $T\subset \mathbb{R}^2$ as a spiral winding around $L$, crossing $M$ once at each rotation and taking values in $2 \pi \mathbb{Z}$ on $M \cap {\rm Reg}_i T$; see Figure \ref{fig:spiral}. The goal in this step is to show that $\boldsymbol{\theta}$ is locally bounded, cf. Theorem \ref{lemma:trapping lemma}. This will be necessary in order to identify a candidate tangent plane for $T$ at the origin. 

\begin{figure}[htbp]
  \centering
\begin{tikzpicture}[scale=.8]
  \filldraw[fill=gray!20, draw=black] (1,0) circle (1);
  \node at (1.3,0.55) {$U$};

  \draw[very thick, dash pattern=on 6pt off 3pt] (0,0) -- (1,0);
  \draw[dotted, thick, ->] (1,0) -- (3.8,0);

  % spiral from p_0 to p_2 (solid)
  \draw[thick, samples=240, domain=0:720, variable=\t]
    plot ({(3 - 3*\t/1080) * cos(\t)}, {(3 - 3*\t/1080) * sin(\t)});
  % last loop from p_2 to the origin (dash-dot)
  \draw[thick, dash dot, samples=120, domain=720:1080, variable=\t]
    plot ({(3 - 3*\t/1080) * cos(\t)}, {(3 - 3*\t/1080) * sin(\t)});
  \node at (2.45,2.3) {$T$};

  \fill (3,0) circle (1.4pt) node[below right, xshift=4pt] {$p_0$};
  \fill (2,0) circle (1.4pt) node[below right, xshift=4pt] {$p_1$};
  \fill (1,0) circle (1.4pt) node[below right, xshift=3pt] {$p_2$};
  \fill (0,0) circle (1.4pt) node[above left, inner sep=1pt] {$L$};
\end{tikzpicture}
  \caption{At the base point $p_0$, $\boldsymbol{\theta}(p_0)=0$. Then $\boldsymbol{\theta}(p_1)=2\pi$, $\boldsymbol{\theta}(p_2)=4\pi$. The cross section of the barrier set $U\subset \{x_m>0\}$ through any two-dimensional slice perpendicular to $L$ over $x\in \bB_1 \cap L$ is the gray disk tangent to $L$ shown above; see Figure \ref{fig:barrier set} below to see this slice relative to the whole picture. Since that disk has positive curvature, it acts as a barrier forcing an area minimizing surface to connect $p_2$ to $L$ through a path contained in $U$ rather than exiting $U$ along the alternating dotted/dashed spiral; in $\mathbb{R}^2$, this path is the bold dashed segment shown above. }
  \label{fig:spiral}
\end{figure}

Let us consider bounding $\boldsymbol{\theta}$ from above on $\bB_1 \cap {\rm Reg}_i T$; the lower bound argument is entirely analogous. Such an upper bound would follow from showing that for some large $N\in \mathbb{N}$, the superlevel set $\{ \boldsymbol{\theta}(x)> 2\pi N \}\cap \bB_1$ is contained in the halfspace $\{x_m>0\}$, since this implies that $\boldsymbol{\theta}\leq 2\pi N + \pi/2$ on $\bB_1 \cap {\rm Reg}_i T$. In turn, to prove this halfspace containment, we use a barrier set $U$ such that
\begin{equation}\label{eq: u containment}
\{ \boldsymbol{\theta}(x)> 2\pi N \}\cap \bB_1\subset U \cap \bB_1\subset \{x_m>0\}\qquad \mbox{for some large $N\in \mathbb{N}$}\,.    
\end{equation}
We have thus reduced bounding $\boldsymbol{\theta}$ from above to the construction of a barrier set $U$ satisfying \eqref{eq: u containment}.

To choose $N$ and prove \eqref{eq: u containment}, let $N\in\N$ and $T_N$ be the current induced by ${\bB_2 \cap }\{\boldsymbol{\theta} > 2\pi N\}$. We first show that for large $N$, the support of $T_N$ is contained in an {arbitrarily} small neighborhood of $L$. This follows from lower density bounds by arguing that, if $T$ had wound around $L$ too many times at positive distance from $L$, the mass of $T$ would be infinity. We then construct $U$ so that $U \cap \bB_1$ is the cylinder $\{|(x_m,x_{m+1})-(\delta,0)|<\delta\}$ contained in $\{x_m>0\}$ and tangent to $L$, and $U\setminus \bB_2$ is a small dilation of this cylinder; see Figure \ref{fig:barrier set}. Thus $U$ is $\bF$-mean convex. In fact, $U$ is also mean convex with respect to the reflected anisotropy $G(\nu)=F(-\nu)$, which will be necessary in order to address possible mismatches in orientation at touching points. Furthermore, for large enough $N$, $\spt\, T_N$ is contained in an arbitrarily small neighborhood of $U$ and $\spt \partial T_N \subset\overline U$, and so a maximum principle (cf. Proposition \ref{corollary:mean convex barriers}) implies that $\spt\, T_N$ must be contained in $\overline U$.

\begin{figure}[htbp]
  \centering
\begin{tikzpicture}[scale=1.8]
  \def\R{0.5}    % base radius on [-1,1]
  \def\eps{0.2}  % dilation at the ends: phi(2) = 1 + eps
  \def\e{0.3}    % oblique factor: depth z is drawn as an x-shift of e*z
  \def\yb{-0.1}  % height of the bottom curve at x = +/-2
  \def\sag{0.2}  % sag of the bottom curve at x = 0
  \def\D{0.5}    % max depth of the surface's fibers (at x = +/-2)
  \def\xa{1.0}   % x from which the fibers start to curl up over L
  \def\a{0.33}   % strength of the curl at x = 2 (keep lobe under the wall)
  \pgfmathsetmacro{\Rc}{\R*(1+\eps)}             % cap radius
  \pgfmathsetmacro{\ec}{\e*\Rc}                  % seam x-radius
  \pgfmathsetmacro{\ed}{\e*\R}                   % x-radius of the x=0 slice
  \pgfmathsetmacro{\xe}{2 + sqrt(\Rc^2 - \R^2)}  % where L exits the caps

  \tikzset{declare function={
    % wall profile, bottom curve, curl amplitude
    phi(\x)   = 1 + \eps*(3*max(0,abs(\x)-1)^2 - 2*max(0,abs(\x)-1)^3);
    bot(\x)   = \yb - \sag*(1-(\x/2)^2);
    amp(\x)   = \a*(3*max(0,(\x-\xa)/(2-\xa))^2 - 2*max(0,(\x-\xa)/(2-\xa))^3);
    % fiber over x, parameter s in [0,1]: starts on L, ends on the bottom curve
    X(\x,\s)  = \x + \e*\D*(\x/2)*sin(180*\s);
    Y(\x,\s)  = \R + \s*(bot(\x)-\R) + amp(\x)*sin(180*\s);
    % parameter where the fiber is highest (0 if it never rises above L)
    ratio(\x) = min(1, (\R - bot(\x))/(amp(\x)*pi + 0.0001));
    sstar(\x) = acos(ratio(\x))/180;
  }}
  \pgfmathsetmacro{\sst}{sstar(2)}

  % hidden halves of the seams
  \draw[densely dotted, gray]
    (-2,\Rc) arc[start angle=90, end angle=270, x radius=\ec, y radius=\Rc];
  \draw[densely dotted, gray]
    (2,\Rc)  arc[start angle=90, end angle=270, x radius=\ec, y radius=\Rc];

  % ---- x = 0 slice of the cylinder: far half (behind T) ----
  \fill[gray!55, fill opacity=0.6]
    (0,\R) arc[start angle=90, end angle=270, x radius=\ed, y radius=\R] -- cycle;
  \draw[densely dotted, gray]
    (0,\R) arc[start angle=90, end angle=270, x radius=\ed, y radius=\R];

  % ---- the surface T ----
  \fill[gray!30, fill opacity=0.7]
    (-2,\R) -- (\xa,\R)
    -- plot[variable=\x, domain=\xa:2, samples=40]           % fold silhouette
         ({X(\x,sstar(\x))}, {Y(\x,sstar(\x))})
    -- plot[variable=\s, domain=\sst:1, samples=40, smooth]  % right fiber, down
         ({X(2,\s)}, {Y(2,\s)})
    -- plot[variable=\x, domain=2:-2, samples=60]            % bottom curve
         (\x, {bot(\x)})
    -- plot[variable=\s, domain=1:0, samples=40, smooth]     % left fiber, up
         ({X(-2,\s)}, {Y(-2,\s)})
    -- cycle;
  % interior fibers (thin, light); the one at 1.85 hooks over L
  \foreach \xf in {-1.5, -0.75, 0.75, 1.5, 1.85}
    \draw[very thin, gray!60, variable=\s, domain=0:1, samples=40, smooth]
      plot ({X(\xf,\s)}, {Y(\xf,\s)});
  % fold silhouette above L (not a boundary, so grey)
  \draw[thin, gray!70, variable=\x, domain=\xa:2, samples=40]
    plot ({X(\x,sstar(\x))}, {Y(\x,sstar(\x))});
  % right side edge: top part hidden behind the curl, rest visible
  \draw[thin, dash dot, gray, variable=\s, domain=0:\sst, samples=20, smooth]
    plot ({X(2,\s)}, {Y(2,\s)});
  \draw[thin, dash dot, variable=\s, domain=\sst:1, samples=40, smooth]
    plot ({X(2,\s)}, {Y(2,\s)});
  % left side edge
  \draw[thin, dash dot, variable=\s, domain=0:1, samples=40, smooth]
    plot ({X(-2,\s)}, {Y(-2,\s)});
  % bottom edge (thick) and its label
  \draw[thick, variable=\x, domain=-2:2, samples=60] plot (\x, {bot(\x)});
  \draw[very thin] ({0.5}, {bot(0.5)}) -- (1.0,-0.85)
    node[below right, inner sep=1pt] {$\{\boldsymbol{\theta} = 2\pi N\}$};

  % label for the x = 0 slice
  \draw[very thin] (-0.04,-0.4) -- (-0.9,-0.85)
    node[below left, inner sep=1pt] {2D slice of $U$ from Figure~\ref{fig:spiral}};

  % ---- x = 0 slice: near half (in front of T) ----
  \fill[gray!55, fill opacity=0.6]
    (0,\R) arc[start angle=90, end angle=-90, x radius=\ed, y radius=\R] -- cycle;
  \draw (0,\R) arc[start angle=90, end angle=-90, x radius=\ed, y radius=\R];

  % labels for the surface and the capsule
  \node at (0.45,0.1) {$T$};
  \node at (-2.35,-0.1) {$U$};

  % hidden parts of L (inside the body)
  \draw[dashed, gray] (-\xe,\R) -- (-1,\R);
  \draw[dashed, gray] ( 1,\R) -- ( \xe,\R);

  % dilated cylinder walls
  \draw[samples=120, domain=-2:2, variable=\x] plot (\x, { \R*phi(\x)});
  \draw[samples=120, domain=-2:2, variable=\x] plot (\x, {-\R*phi(\x)});

  % hemispherical caps of radius R(1+eps)
  \draw ( 2, \Rc) arc[start angle=90,  end angle=-90, radius=\Rc];
  \draw (-2,-\Rc) arc[start angle=270, end angle=90,  radius=\Rc];

  % visible halves of the seams
  \draw (-2,\Rc) arc[start angle=90, end angle=-90, x radius=\ec, y radius=\Rc];
  \draw (2,\Rc)  arc[start angle=90, end angle=-90, x radius=\ec, y radius=\Rc];

  % visible parts of L, with arrowheads at both ends
  \draw[<-] (-3.2,\R) -- (-\xe,\R);
  \draw (-1,\R) -- (1,\R);
  \draw[->] ( \xe,\R) -- (3.2,\R) node[right] {$L = \mathbb{R}^{m-1} \times \{0\}^2$};

  % tick marks on L
  \foreach \x in {-1, 1}
    \draw (\x,\R-0.08) -- (\x,\R+0.08) node[above] {$\x$};
  \foreach \x in {-2, 2}
    \draw (\x,\R-0.08) -- (\x,\R+0.08) node[above, yshift=3pt] {$\x$};
\end{tikzpicture}
  \caption{Here the plane containing the page is $\{x_m=0\}$, so that the half plane corresponding to angle $0$ and containing the bold level set $\{\boldsymbol{\theta}=2\pi N\}$ is the portion below $L$. The anisotropic area minimizer induced by $\bB_2 \cap \{\boldsymbol{\theta}>2\pi N\}$ is the gray surface $T_N$, and it is bounded by four curves: a portion of $L$, $\{\boldsymbol{\theta}=2\pi N\}$, and the alternating dotted/dashed curves $\{\boldsymbol{\theta}>2\pi N\} \cap \bB_2$ coming from slicing $T$ by $\partial \bB_2$. The barrier set $U$ is built so that it contains these four curves, and thus the boundary of $T_N$. This allows for a maximum principle argument yielding $\spt T_N \subset U$.}
  \label{fig:barrier set}
\end{figure}

\subsubsection{Parametric Hopf lemma and blowups}
Once we know that $\boldsymbol{\theta}$ is bounded on $\bB_1$, we have a natural candidate for the limiting halfplane(s) hopefully contained in a flat blowup: the half plane $M_{\rm min}$ with boundary $L$ corresponding to the angle
\begin{equation}\notag
   \boldsymbol{\theta}_{\rm min} = \lim_{r\searrow 0}\inf \{\boldsymbol{\theta}(x) : x\in \bB_r \cap {\rm Reg}_i T  \}=:\lim_{r\searrow 0}\boldsymbol{\theta}_r \,.
   \end{equation}
We emphasize that the infimums exist for each $r\in (0,1)$ only because $\boldsymbol{\theta}$ is bounded, and that $\boldsymbol{\theta}_{\rm min}$ exists since it is the limit of a bounded, monotone sequence. To simplify the notation, let us assume that $\boldsymbol{\theta}_{\rm min}\in 2\pi \mathbb{Z}$. {We also denote by $M_r$ the halfplanes corresponding to angles $\boldsymbol{\theta}_r$.}

The first step in the identification of a flat blowup is showing that $M_{\rm min}$ must be contained in the support of any blowup $T_0$ of $T$ at $0$. Postponing the summary of this argument for a moment, the remainder of the argument is an iterative procedure where we take further blowups, repeating at each stage the argument that yielded $M_{\rm min}$ to identify further halfplanes contained in these blowups. By upper density bounds on minimizers, this process must terminate after finitely many iterations, resulting in a final blowup given by a union of halfplanes with boundary $L$. By a two-dimensional argument using the $\bF$-minimality of the final blowup and convexity of $F$ (cf. Lemma \ref{lemma: resolution in 2d}), these half hyperplanes must be all contained in the same plane, and so that plane supports a flat blowup of $T$ at $0$. 

The proof that $M_{\rm min}$ is contained in the support of any blowup $T_0$ of $T$ at $0$ consists of two steps.  First, assuming for contradiction that $M_{\rm min}\not\subset \spt T_0$, we may choose $x_0 \in M_{\rm min} \setminus \spt T_0$ and a small ball $\bB_{{r_0}}(x_0)$ {disjoint from} $\spt T_0$. We now argue that  {there exists a connected component} of ${\spt T_0 \cap \{x_m>0 \}}$ contained in the wedge $W={\{x_m>0, x_{m+1}>0 \}}$, which heuristically follows from the definition of $\boldsymbol{\theta}_{\rm min}\in 2\pi \mathbb{Z}$. Indeed, {since $\boldsymbol{\theta}_r$ is the infimum of $\boldsymbol{\theta}$ on $\bB_r$, for small $r>0$, the component of $\spt T_{0,r} \cap \bB_1 \cap \{ x_m>0\}$ achieving $\boldsymbol{\theta}_r$ cannot cross the halfplane corresponding to $\boldsymbol{\theta}_r$ (as it would thus take values strictly smaller than $\boldsymbol{\theta}_r$), and so taking a limit in these components yields a component of $\spt T_0 \cap \{x_m>0\}$ contained in $W$.  Second, using the ``separation'' between ${T_0}$ and $M_{\rm min}$ given by $\bB_{r_0}(x_0)$ and the fact that the support of $T$ is contained in the wedge $W$, {for small $r>0$ we insert the graph of a barrier function $u_r$ in between $M_{r}$ and $\spt\,T_{0,r} \cap \{x_m>0\} \cap \bB_1$ for small $r$.} By a parametric Hopf lemma (Proposition \ref{lemma:hopf currents}), {the graph of $u_r$ meets $L$ at a small positive angle $\delta$ relative to the halfplane corresponding to $\boldsymbol{\theta}_r$, and a compactness argument shows that $\delta$ depends on $r_0$ but not on $r$; see Figure \ref{fig:hopf figure}. Thus, for $r$ small enough, the parametric Hopf lemma also gives that the component of $\spt T_{0,r} \cap \bB_1 \cap \{x_m>0\}$ achieving $\boldsymbol{\theta}_r$ lives above the plane determined by $\boldsymbol{\theta}_r$. This contradicts the definition of $\boldsymbol{\theta}_r$ and completes the proof that $M_{\rm min}\subset \spt T_0$.}

\begin{figure}[htbp]
  \centering
\begin{tikzpicture}[scale=1.5]
  \def\kx{0.45}  \def\kz{0.30}  % oblique view: (x,y,z) -> (x + kx*y, z + kz*y)
  \def\a{0.5}   % corner rounding: x-radius of the corner arcs
  \def\b{0.6}   % corner rounding: y-radius (Omega reaches |y| = 1+b)
  \def\c{1.6}   % x-radius of the right-hand side (Omega's width is a+c)
  \def\eps{0.12}  % tilt of the second half-plane z = -eps*x (exaggerated)
  \def\X{2.8}     % how far the half-planes are drawn in x
  \def\Yf{-2.4}   \def\Yb{2.4}   % y-extent of the half-planes
  % the surface T: graph of ft(x,y) over all of Omega
  \def\m{0.577}   % slope in x at the segment (tan 30 degrees)
  \def\cq{0.17}   % how quickly the slope decays with x
  \def\tw{0.05}   % twist: lower at the front (y<0), higher at the back (y>0)
  % the graph of u_r: between M_r and T
  \def\mu{0.268}  % slope in x at the segment (tan 15 degrees)
  \def\cu{0.10}   % decay of the slope with x
  \def\bu{0.03}   % ridge along y=0: falls off toward the front and back
  % the ball B_r
  \def\rb{0.35}   % radius
  \pgfmathsetmacro{\req}{0.35*\rb}   % apparent half-height of its equator

  \tikzset{declare function={
    px(\x,\y) = \x + \kx*\y;
    pz(\y,\z) = \z + \kz*\y;
    % boundary of Omega in three curved pieces (plus the flat segment x=0, |y|<=1)
    cxT(\f) = \a + \a*cos(\f);   cyT(\f) =  1 + \b*sin(\f);   % top-left corner,    f: 180 -> 90
    cxR(\f) = \a + \c*cos(\f);   cyR(\f) = (1+\b)*sin(\f);    % right side,         f: 90 -> -90
    cxB(\f) = \a + \a*cos(\f);   cyB(\f) = -1 + \b*sin(\f);   % bottom-left corner, f: 270 -> 180
    % the surfaces, and the extent of Omega along y = const (|y|<=1) or x = const (x>=a)
    ft(\x,\y) = \m*\x - \cq*\x^2 + \tw*\y*\x^2;
    fu(\x,\y) = \mu*\x - \cu*\x^2 - \bu*\x^2*\y^2;
    xw(\y)    = \a + \c*sqrt(1 - (\y/(1+\b))^2);
    yw(\x)    = (1+\b)*sqrt(1 - ((\x-\a)/\c)^2);
  }}
  % parameters of the two vertical silhouette lines
  \pgfmathsetmacro{\fF}{180 + atan(\kx*\b/\a)}      % front silhouette, on the bottom-left corner
  \pgfmathsetmacro{\fB}{atan(\kx*(1+\b)/\c)}        % back silhouette, on the right side
  % where L disappears behind the body and where it re-emerges
  \pgfmathsetmacro{\yE}{px(cxB(\fF),cyB(\fF))/\kx}
  \pgfmathsetmacro{\psiX}{acos((\kx - \kz*\a)/(\kz*\c))}
  \pgfmathsetmacro{\yX}{px(cxR(\psiX),cyR(\psiX))/\kx}
  \pgfmathsetmacro{\Ym}{-(1+\b)}       % front extreme of the body: planes split here
  \pgfmathsetmacro{\zX}{-\eps*\X}      % height of the tilted plane at x = X
  % picture coordinates of the right-hand silhouette line, and where the far edge of M_min crosses it
  \pgfmathsetmacro{\xs}{px(cxR(\fB),cyR(\fB))}
  \pgfmathsetmacro{\zs}{\kz*(\xs - \X)/\kx}

  % ---- body: silhouette region, mostly opaque ----
  \fill[white, fill opacity=0.8]
    plot[variable=\f, domain=\fB:90, samples=40]    ({px(cxR(\f),cyR(\f))},{pz(cyR(\f),1)})
    -- plot[variable=\f, domain=90:180, samples=30] ({px(cxT(\f),cyT(\f))},{pz(cyT(\f),1)})
    -- ({px(0,-1)},{pz(-1,1)})
    -- plot[variable=\f, domain=180:\fF, samples=20] ({px(cxB(\f),cyB(\f))},{pz(cyB(\f),1)})
    -- ({px(cxB(\fF),cyB(\fF))},{pz(cyB(\fF),-1)})
    -- plot[variable=\f, domain=\fF:270, samples=30]  ({px(cxB(\f),cyB(\f))},{pz(cyB(\f),-1)})
    -- plot[variable=\f, domain=-90:\fB, samples=50] ({px(cxR(\f),cyR(\f))},{pz(cyR(\f),-1)})
    -- cycle;

  % hidden part of the bottom outline
  \draw[densely dotted, gray]
    plot[variable=\f, domain=\fB:90, samples=40]    ({px(cxR(\f),cyR(\f))},{pz(cyR(\f),-1)})
    -- plot[variable=\f, domain=90:180, samples=30] ({px(cxT(\f),cyT(\f))},{pz(cyT(\f),-1)})
    -- ({px(0,-1)},{pz(-1,-1)})
    -- plot[variable=\f, domain=180:\fF, samples=20] ({px(cxB(\f),cyB(\f))},{pz(cyB(\f),-1)});

  % hidden vertical edges of the flat face
  \draw[densely dotted, gray] ({px(0,-1)},{pz(-1,-1)}) -- ({px(0,-1)},{pz(-1,1)});
  \draw[densely dotted, gray] ({px(0, 1)},{pz( 1,-1)}) -- ({px(0, 1)},{pz( 1,1)});

  % hidden stretch of L
  \draw[dashed, gray] ({px(0,\yE)},{pz(\yE,0)}) -- ({px(0,\yX)},{pz(\yX,0)});

  % ---- half-planes: everything from the body's front backwards (through the translucent body) ----
  \fill[gray!12, fill opacity=0.4]
    ({px(0,\Ym)},{pz(\Ym,0)}) -- ({px(\X,\Ym)},{pz(\Ym,0)})
    -- ({px(\X,\Yb)},{pz(\Yb,0)}) -- ({px(0,\Yb)},{pz(\Yb,0)}) -- cycle;
  \draw[thin] ({px(\X,\Yb)},{pz(\Yb,0)}) -- ({px(0,\Yb)},{pz(\Yb,0)});
  \fill[gray!25, fill opacity=0.4]
    ({px(0,\Ym)},{pz(\Ym,0)}) -- ({px(\X,\Ym)},{pz(\Ym,\zX)})
    -- ({px(\X,\Yb)},{pz(\Yb,\zX)}) -- ({px(0,\Yb)},{pz(\Yb,0)}) -- cycle;
  \draw[thin] ({px(\X,\Yb)},{pz(\Yb,\zX)}) -- ({px(0,\Yb)},{pz(\Yb,0)});

  % visible part of the bottom outline
  \draw
    plot[variable=\f, domain=\fF:270, samples=30]   ({px(cxB(\f),cyB(\f))},{pz(cyB(\f),-1)})
    -- plot[variable=\f, domain=-90:\fB, samples=50] ({px(cxR(\f),cyR(\f))},{pz(cyR(\f),-1)});

  % vertical silhouette lines
  \draw ({px(cxB(\fF),cyB(\fF))},{pz(cyB(\fF),-1)}) -- ({px(cxB(\fF),cyB(\fF))},{pz(cyB(\fF),1)});
  \draw ({px(cxR(\fB),cyR(\fB))},{pz(cyR(\fB),-1)}) -- ({px(cxR(\fB),cyR(\fB))},{pz(cyR(\fB),1)});

  % top face outline (all visible)
  \draw
    ({px(0,-1)},{pz(-1,1)}) -- ({px(0,1)},{pz(1,1)})
    -- plot[variable=\f, domain=180:90, samples=30]  ({px(cxT(\f),cyT(\f))},{pz(cyT(\f),1)})
    -- plot[variable=\f, domain=90:-90, samples=60]  ({px(cxR(\f),cyR(\f))},{pz(cyR(\f),1)})
    -- plot[variable=\f, domain=270:180, samples=30] ({px(cxB(\f),cyB(\f))},{pz(cyB(\f),1)})
    -- cycle;

  % ---- the graph of u_r (below T, so drawn first) ----
  \fill[gray!55, fill opacity=0.5]
    ({px(0,-1)},{pz(-1,0)}) -- ({px(0,1)},{pz(1,0)})
    -- plot[variable=\f, domain=180:90, samples=30]
         ({px(cxT(\f),cyT(\f))},{pz(cyT(\f),fu(cxT(\f),cyT(\f)))})
    -- plot[variable=\f, domain=90:-90, samples=60]
         ({px(cxR(\f),cyR(\f))},{pz(cyR(\f),fu(cxR(\f),cyR(\f)))})
    -- plot[variable=\f, domain=270:180, samples=30]
         ({px(cxB(\f),cyB(\f))},{pz(cyB(\f),fu(cxB(\f),cyB(\f)))})
    -- cycle;
  \foreach \xc in {0.5, 1.0, 1.5}
    \draw[very thin, gray!75, variable=\y, domain=-yw(\xc):yw(\xc), samples=30]
      plot ({px(\xc,\y)},{pz(\y,fu(\xc,\y))});
  \draw[thin]
    plot[variable=\f, domain=180:90, samples=30]
      ({px(cxT(\f),cyT(\f))},{pz(cyT(\f),fu(cxT(\f),cyT(\f)))})
    -- plot[variable=\f, domain=90:-90, samples=60]
      ({px(cxR(\f),cyR(\f))},{pz(cyR(\f),fu(cxR(\f),cyR(\f)))})
    -- plot[variable=\f, domain=270:180, samples=30]
      ({px(cxB(\f),cyB(\f))},{pz(cyB(\f),fu(cxB(\f),cyB(\f)))});

  % ---- the surface T: graph over Omega, boundary = segment on L + curve on the lateral wall ----
  \fill[gray!30, fill opacity=0.7]
    ({px(0,-1)},{pz(-1,0)}) -- ({px(0,1)},{pz(1,0)})
    -- plot[variable=\f, domain=180:90, samples=30]
         ({px(cxT(\f),cyT(\f))},{pz(cyT(\f),ft(cxT(\f),cyT(\f)))})
    -- plot[variable=\f, domain=90:-90, samples=60]
         ({px(cxR(\f),cyR(\f))},{pz(cyR(\f),ft(cxR(\f),cyR(\f)))})
    -- plot[variable=\f, domain=270:180, samples=30]
         ({px(cxB(\f),cyB(\f))},{pz(cyB(\f),ft(cxB(\f),cyB(\f)))})
    -- cycle;
  \foreach \yc in {-1, 0, 1}
    \draw[very thin, gray!60, variable=\x, domain=0:xw(\yc), samples=30]
      plot ({px(\x,\yc)},{pz(\yc,ft(\x,\yc))});
  \foreach \xc in {0.7, 1.2, 1.7}
    \draw[very thin, gray!60, variable=\y, domain=-yw(\xc):yw(\xc), samples=30]
      plot ({px(\xc,\y)},{pz(\y,ft(\xc,\y))});
  \draw[thin]
    plot[variable=\f, domain=180:90, samples=30]
      ({px(cxT(\f),cyT(\f))},{pz(cyT(\f),ft(cxT(\f),cyT(\f)))})
    -- plot[variable=\f, domain=90:-90, samples=60]
      ({px(cxR(\f),cyR(\f))},{pz(cyR(\f),ft(cxR(\f),cyR(\f)))})
    -- plot[variable=\f, domain=270:180, samples=30]
      ({px(cxB(\f),cyB(\f))},{pz(cyB(\f),ft(cxB(\f),cyB(\f)))});
  \draw ({px(0,-1)},{pz(-1,0)}) -- ({px(0,1)},{pz(1,0)});   % common boundary of T and u_r on L

  % labels for the surfaces (u_r's label hangs just below its front edge, to the right)
  \node at ({px(1.45,-0.75)},{pz(-0.75,ft(1.45,-0.75))}) {$T_{0,r}$};
  \node[below] at ({px(cxR(-35),cyR(-35))},{pz(cyR(-35),fu(cxR(-35),cyR(-35)))}) {$u_r$};

  % ---- the ball B_r centred at the origin, with an equator for the 3D look ----
  %\draw[thin] (0,0) circle (\rb);
  %\draw[thin] (-\rb,0) arc[start angle=180, end angle=360, x radius=\rb, y radius=\req];
  %\draw[thin, densely dotted, gray] (-\rb,0) arc[start angle=180, end angle=0, x radius=\rb, y radius=\req];
  \fill (0,0) circle (1.2pt);   % the origin, centre of B_r
  \node[left] at (-0.58,0.02) {$0$};

  % ---- half-planes: front strips (in front of the body) and their visible edges ----
  \fill[gray!12, fill opacity=0.4]
    ({px(0,\Yf)},{pz(\Yf,0)}) -- ({px(\X,\Yf)},{pz(\Yf,0)})
    -- ({px(\X,\Ym)},{pz(\Ym,0)}) -- ({px(0,\Ym)},{pz(\Ym,0)}) -- cycle;
  \draw[thin] ({px(0,\Yf)},{pz(\Yf,0)}) -- ({px(\X,\Yf)},{pz(\Yf,0)}) -- ({px(\X,\Yb)},{pz(\Yb,0)});
  \fill[gray!25, fill opacity=0.4]
    ({px(0,\Yf)},{pz(\Yf,0)}) -- ({px(\X,\Yf)},{pz(\Yf,\zX)})
    -- ({px(\X,\Ym)},{pz(\Ym,\zX)}) -- ({px(0,\Ym)},{pz(\Ym,0)}) -- cycle;
  \draw[thin] ({px(0,\Yf)},{pz(\Yf,0)}) -- ({px(\X,\Yf)},{pz(\Yf,\zX)}) -- ({px(\X,\Yb)},{pz(\Yb,\zX)});

  % the point x_0: where the outline of the cylinder crosses the edge of M_min
  \fill (\xs,\zs) circle (1.2pt) node[below right] {$x_0$};

  % visible parts of L
  \draw[<-] ({px(0,-2.8)},{pz(-2.8,0)}) -- ({px(0,\yE)},{pz(\yE,0)});
  \draw[->] ({px(0,\yX)},{pz(\yX,0)}) -- ({px(0,5.4)},{pz(5.4,0)});
  \node at ({px(0,-2.8)},{pz(-2.8,0)}) [below left] {$L = \mathbb{R}^{m-1} \times \{0\}^2$};

  % labels for the half-planes, at their back-right (top-right in the picture) corners
  \node[above right] at ({px(\X,\Yb)},{pz(\Yb,0)})   {$M_{\mathrm{min}}$};
  \node[below right] at ({px(\X,\Yb)},{pz(\Yb,\zX)}) {$M_r$};
\end{tikzpicture}
  \caption{The barrier function $u_r$ is constructed on a cylinder contained in $\{x_m>0\}$, meets $L$ with a positive angle above $M_r$, and is squeezed in between $M_r$ and $T_{0,r}$. The positive angle $\delta$ depends on the distance of $x_0$ to the blowup.}
  \label{fig:hopf figure}
\end{figure}

\subsection{Further discussion}\label{remark:comparison with area}
The idea of using a maximum principle argument nearby a point of minimum (or maximum) angle $\boldsymbol{\theta}$ to deduce flatness is due to Hardt and Simon \cite{HS}, but they use a completely different argument to control $\boldsymbol{\theta}$. In the isotropic case, the monotonicity formula and resulting \emph{conical} blowups imply that $\boldsymbol{\theta}$ is positively one homogeneous, and in particular is characterized by its values on the link of a blowup cone. By an induction on dimension {using their $\varepsilon$-regularity theorem and the conical structure}, the link is regular up to the boundary, and so $\boldsymbol{\theta}$ is continuous on the link and thus achieves its maximum. For the anisotropic problem considered here, blowups are not cones a priori {and there is not a two-sided $\varepsilon$-regularity theorem}, so $\boldsymbol{\theta}$ is not readily bounded by using continuity and compactness on the link. Therefore, we instead introduce the new barrier set to bound the polar angle function and circumvent the lack of a monotonicity formula {and $\varepsilon$-regularity theorem}.

There is also the anisotropic halfspace Bernstein theorem of Du, Mooney, Yang, and Zhu \cite{du2023half}. The authors prove that any smooth solution to the anisotropic minimal surface equation with linear boundary data on a halfspace (or even a convex domain that is not the entire space) is linear. In this graphical setting, $\boldsymbol{\theta}$ automatically takes values in $[-\pi/2,\pi/2]$ (they work instead with slopes of linear functions), and a tilting planes argument combined with ideas from nonlinear PDE theory allows them to prove that the entire configuration is flat. 

\subsection{Acknowledgments} MN has been supported by NSF Grant DMS-2607241. Figures \ref{fig:spiral}--\ref{fig:hopf figure} were drawn in TikZ with the assistance of publicly available LLMs.

\section{Notation and background}\label{sec:prelim}

We use $\ball{p}{r}$ for the open ball of radius $r$ and centered at $p$. {If $p=0$, we simply write $\bB_r$.} We use the notation \(p = (p^{\prime},p_{m+1})\in\R^m\times \R\) and $B_r(p^\prime)$ for the open ball in $\R^m$ of radius $r$ and centered at $p^\prime$.

For any subset $V$ of $\R^{m+n}$, define its $\epsilon$-neighborhood by $B_\epsilon(V):=\{x\in\R^{m+n}: {\rm dist}\,(x,V) <\epsilon\}$. We denote the $k$-dimensional Hausdorff measure in $\R^{m+n}$ by $\cH^k$.  

For background on the theory of currents, we refer the reader to \cite{LSimon_GMT,Fed}. 

Fixing $\{e_1,\ldots,e_{m+1}\}$ to be the canonical basis of $\R^{m+1}$, the Hodge star operator will always be denoted by $\star$, and it is defined as the linear isometry characterized by $\star(e_1\wedge\ldots \widehat{e_i}\ldots\wedge e_{m+1}) = (-1)^{m+1-i}e_i$ for any $i\in\{1,\ldots, m+1\}$.

We refer the reader to \cite{maggi2012sets} for background on sets of finite perimeter. Given a set of locally finite perimeter $E\subset \R^{m+1}$, we denote its reduced boundary by $\partial^* E$, the local perimeter by $P(E;A)$ for any measurable set $A\subset \R^{m+1}$, and its outward unit normal at $x$ by $\nu_E(x)$ whenever it exists. We denote by $P_F(E;A)$ the anisotropic perimeter of $E$ relative to $A$.

Our convention for the mean curvature is such that $\partial \bB_r$ has mean curvature $1/r$.

\subsection{Anisotropic integrands and $\boldsymbol{(\bF,\omega)}$-minimizers}\label{subsec:anis integrands}

Let us set the nomenclature and hypothesis on the integrands that will be used in this work. A function \(F:\R^{m+1}\to \R_+\) is called a \emph{geometric or parametric integrand}, and the \emph{anisotropic mass of $T$}, where $T$ is an integer rectifiable $m$-current in $\R^{m+1}$, is the number
\begin{equation*}
    \bF(T) = \int_{\R^{m+1}} F(\star\vec{T}(x))d|T|(x).
\end{equation*}

Fixing the direction $e_{m+1}$, the function $F_\S:\mathbb{R}^m \to \mathbb{R}$ defined by $F_\S(v)=F((-v,1))$ is called a {\it non-parametric integrand}. Thus for an open subset $\Omega \subset \mathbb{R}^m$ and the current $\bG_u$ induced by the graph of a $W^{1,1}$ function $u:\Omega \to \mathbb{R}$ oriented by its upward pointing normal,
\begin{equation}\label{eq:calF def}
\bF(\bG_u) = \sF(u) := \int_\Omega F_\S(\nabla u)\,d\mathcal{H}^m\,.
\end{equation}

The following is the regularity class of our integrands.

\begin{definition}\label{def: unif elliptic C2 Dini}
We say that \emph{$F$ is uniformly elliptic of class $C^{2,\dini}$} if $F$ is a geometric integrand that is positively one-homogeneous, $C^2$ on $\R^{m+1}\setminus\{0\}$, and satisfies
\begin{equation*}
    a|\tau|^2
    \leq D^2F(\nu)[\tau,\tau]
    \leq A|\tau|^2,
    \qquad
    \forall\,\nu\in\mathbb S^m,\quad \tau\in\nu^\perp.
\end{equation*}
Moreover, setting $\rho_F(r)
    :=
    \sup\bigl\{
        |D^2 F(u)-D^2 F(v)|:
        u,v\in\bbS^m,\ |u-v|\leq r
    \bigr\}$,
we require $
    \int_0^1 \frac{\rho_F(r)}{r}\,dr < + \infty$. We denote the upper and lower bounds of $F$ on $\bbS^m$ by $\Lambda$ and $\lambda$, respectively.
\end{definition}

We define the interior regular set of a current. 

\begin{definition}[Regular sets]\label{def: regular set}
Let $T$ be an integer rectifiable $m$-current in $\R^{m+1}$. The interior regular set ${\rm Reg}_i T$ is the set of all points $x\in\spt T\setminus \spt\,\partial T$ for which there are $r>0$, $\beta >0$, and an embedded hypersurface $\Sigma$ of class $C^{1,\beta}$ satisfying $\Sigma\cap \bB_r(x) = \spt T\cap\bB_r(x)$.
\end{definition}

We now define almost minimality.

\begin{definition}
[$(\bF,\omega)$-minimizers]\label{def: F omega minimizer}
Let $U\subset\R^{m+1}$ be open, let $B$ be an $(m-1)$-dimensional
integral current, and let $\omega:[0,\infty)\to[0,\infty)$ be
nondecreasing with $\lim_{r\to 0}\omega(r)=0$. We say that an $m$-dimensional integral
current $T$ is an \emph{$(\bF,\omega)$-minimizer in $U$ with boundary
$B$} if $\partial T=B$ and
\begin{equation*}
    \bF(T)
    \leq
    \bF(T+X)+\omega(r)\mathbf{M}(X)
\end{equation*}
for every $r>0$ and every $m$-dimensional integer rectifiable current $X$
such that $\partial X=0,
    \spt X\subset U,$ and $
    \diam(\spt X)\leq r$. If $\omega \equiv 0$, we say that $T$ is an $\bF$-minimizer.
\end{definition}

\section{Preliminary results}

\subsection{Small data existence for the Dirichlet problem and a comparison principle}

Here we adapt the argument of \cite[Lemma 3.5]{figalli2017regularity} to show the existence and $C^{1,\alpha}$-regularity up to the boundary for the Dirichlet problem corresponding to the operator $\nabla u \mapsto {\rm div}\, \nabla F_\S( \nabla u)$. We recall for example that in the case of the minimal surface operator, mean convexity of $\Omega$ is equivalent to existence of solutions to the Dirichlet problem for arbitrary continuous boundary data, cf. \cite[Theorem 1]{jenkins1968dirichlet}. Thus some smallness assumption on the boundary data is necessary to have existence for general smooth domains (the sharp condition is essentially smallness in the Lipschitz seminorm \cite{williams1984dirichlet}). 

\begin{proposition}[Small data existence for the Dirichlet problem]\label{lemma: small data existence}
    If $F:\mathbb{R}^{m+1} \to \mathbb{R}_+$ is uniformly elliptic of class $C^{2,\dini}$, $U$ is a {smooth} domain, {$\eta \in (0,1)$}, then there are $\kappa_0>0$, $C>0$, and $\alpha\in (0,1)$ with the following property:
    
\setlength{\leftskip}{.25cm}
    \noindent if {$g\in C^\infty(\partial U)$ and} ${\|g\|_{C^{1,\eta}(\partial U)}\leq \kappa_0}$, then there is a unique ${u\in (C^{1,\alpha} \cap C^{2})(U)}$ minimizing $\sF$ on $U$ among $v\in W^{1,1}(U)$ with $v=g$ on $\partial U$ and satisfying ${\rm div}\, \nabla F_\S(\nabla u)=0$ in the weak sense with
    \begin{equation}\notag
     \|u\|_{C^{1,\alpha}(U)}\leq C {\|g\|_{C^{1,\eta}(\partial U)}}\,.   
    \end{equation}
\setlength{\leftskip}{0pt}
\end{proposition}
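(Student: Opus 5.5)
We follow the scheme of \cite[Lemma 3.5]{figalli2017regularity}: perturbation off the zero solution by a fixed point argument for the linearized operator, then a variational identification of the solution. Write the operator as
\[
Q(u):={\rm div}\,\nabla F_\S(\nabla u)=\sum_{i,j}\partial_{ij}F_\S(\nabla u)\,\partial_{ij}u .
\]
Set $A_0:=D^2F_\S(0)$. By uniform ellipticity of $F$ (restricted to directions tangent to $\bbS^m$ at $e_{m+1}$, together with one-homogeneity), $A_0$ is symmetric and positive definite, with eigenvalues comparable to $a$ and $A$. Moreover, $D^2F_\S$ is continuous near $0$ with modulus of continuity comparable to $\rho_F$, which is Dini.

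\textbf{Step 1: Schauder setup.} Fix $\alpha\in(0,\eta]$, and for $v\in C^{1,\alpha}(\overline U)$ consider the linear problem
\[
{\rm div}\big(A(\nabla v)\nabla w\big)=0 \text{ in } U,\qquad w=g \text{ on }\partial U,
\]
where $A(p):=\int_0^1D^2F_\S(tp)\,dt$. This form is chosen so that $\nabla F_\S(\nabla v)=\nabla F_\S(0)+A(\nabla v)\nabla v$, and divergence kills the constant. When $\|\nabla v\|_{C^0}\le\kappa$ is small, $A(\nabla v)$ is uniformly elliptic. Its $C^{\alpha}$ seminorm is controlled by the Dini/Hölder regularity of $D^2F_\S$ composed with $\nabla v$. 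Here is the first subtlety: $D^2F_\S$ is only Dini continuous, so $A(\nabla v)$ need not be $C^\alpha$. I would handle this via boundary $C^{1,\alpha}$ estimates for divergence-form equations with Dini continuous coefficients (or with coefficients that are merely $C^0$ and close to constant; $C^{1,\alpha}$ estimates for operators close to $A_0$ follow from perturbation/compactness). Concretely, write
\[
{\rm div}(A_0\nabla w)={\rm div}\big((A_0-A(\nabla v))\nabla w\big),
\]
and use $C^{1,\alpha}$ Schauder theory for the constant coefficient operator on the smooth domain $U$ with a divergence right-hand side. The difference $A_0-A(\nabla v)$ is small in $C^0$ but its $C^\alpha$ norm is uncontrolled, so instead I would run the iteration in $C^{1,\alpha}$ using the solution map for ${\rm div}(A_0\nabla\,\cdot)$ with right-hand side ${\rm div}\,G$, where
\[
G:=\nabla F_\S(\nabla v)-\nabla F_\S(0)-A_0\nabla v .
\]
This $G$ is quadratic-small: $|G|\lesssim\rho(|\nabla v|)|\nabla v|$, and its $C^\alpha$ norm is $\lesssim \sup_{|p|\le\kappa}|D^2F_\S(p)-A_0|\,\|\nabla v\|_{C^{\alpha}}$. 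The latter bound comes from the fundamental theorem of calculus applied to the $C^1$ map $p\mapsto\nabla F_\S(p)-A_0p$, whose derivative is small on $B_\kappa$.

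\textbf{Step 2: contraction.} Define $\Phi(v)=w$, where $w$ solves
\[
{\rm div}(A_0\nabla w)=-{\rm div}\,G(\nabla v),\qquad w=g \text{ on }\partial U.
\]
Boundary Schauder gives
\[
\|w\|_{C^{1,\alpha}}\le C_0\big(\|g\|_{C^{1,\alpha}(\partial U)}+\|G\|_{C^\alpha}\big)\le C_0\kappa_0+C_0\,o(1)\|v\|_{C^{1,\alpha}}.
\]
So $\Phi$ maps the ball $\{\|v\|_{C^{1,\alpha}}\le 2C_0\kappa_0\}$ into itself once $\kappa_0$ is small. Differences $G(\nabla v_1)-G(\nabla v_2)$ are estimated the same way, via the smallness of $D^2F_\S-A_0$ and its uniform continuity, giving a contraction. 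This is the step I expect to be the main obstacle: estimating $\|G(\nabla v_1)-G(\nabla v_2)\|_{C^\alpha}$ requires a modulus for $D^2F_\S$ compatible with $C^\alpha$. With only Dini continuity, I would pass to a compactness argument, or use Schauder's fixed point theorem rather than Banach's, since $\Phi$ is continuous and compact into $C^{1,\alpha'}$ for $\alpha'<\alpha$. This yields a fixed point $u$ with $\|u\|_{C^{1,\alpha}}\le C\|g\|_{C^{1,\eta}}$, solving $Q(u)=0$ weakly.

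\textbf{Step 3: interior $C^2$, minimality, uniqueness.} Interior $C^{2}$ regularity follows from the Dini condition: differentiate the equation in difference quotients to see that $\partial_k u$ solves a divergence-form equation with Dini continuous coefficients, whose solutions are $C^1$ (Dini Schauder theory). Minimality among $W^{1,1}$ competitors with the same trace follows from convexity of $F_\S$. For any admissible $v$,
\[
\sF(v)\ge\sF(u)+\int_U\nabla F_\S(\nabla u)\cdot\nabla(v-u)=\sF(u),
\]
where the integral vanishes by the weak equation. Since $\nabla F_\S(\nabla u)$ is bounded, testing against the $W^{1,1}_0$ function $v-u$ is justified by density. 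Uniqueness follows from strict convexity of $F_\S$ on the set $|p|\le\kappa$. For general minimizers, one uses the equality case of the convexity inequality together with the fact that $F_\S$ is strictly convex everywhere, since $F$ is uniformly elliptic. This forces $\nabla v=\nabla u$ a.e., and hence $v=u$.
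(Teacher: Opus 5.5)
Your argument is correct, but it takes a different route from the paper's. It is also not the scheme the paper attributes to \cite{figalli2017regularity}, despite your first sentence.

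\textbf{The paper's route.} The paper never linearizes. It replaces $F_\S$ outside $B_{100}$ by a uniformly convex integrand $\tilde F$ with quadratic growth. Existence and uniqueness of the minimizer then come from the direct method in $W^{1,2}(U)$. The bound $\|u\|_{C^{1,\alpha}(U)}\le C\|g\|_{C^{1,\eta}(\partial U)}$ is obtained as an a priori estimate for that minimizer, by running Figalli's proof; its boundary input is the $C^{1,\alpha}$ boundary regularity of \cite{silvestresirakov} for fully nonlinear equations. Smallness of $\kappa_0$ then gives $|\nabla u|<100$, so the modification of $F_\S$ is never seen, and minimality for $\sF$ follows from convexity.

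\textbf{Your route.} You construct the solution perturbatively around $u\equiv 0$. The decisive move is putting $G(p)=\nabla F_\S(p)-\nabla F_\S(0)-D^2F_\S(0)p$ on the right-hand side. Since $G$ is $C^1$ with $|DG|$ small on a small ball, $\|G(\nabla v)\|_{C^\alpha}$ is small for small $\|v\|_{C^{1,\alpha}}$, whatever the modulus of continuity of $D^2F_\S$. Constant-coefficient divergence-form Schauder theory then closes the self-map. You correctly observe that a contraction in $C^{1,\alpha}$ is unavailable, and Schauder's fixed point theorem legitimately replaces it: the $C^{1,\alpha}$-ball is convex and compact in $C^{1,\alpha'}$, and $\Phi$ is continuous there by interpolation. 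Banach's theorem would also work, because $\Phi$ is a contraction for the metric $\|\nabla(v_1-v_2)\|_{L^2(U)}$ on that ball intersected with $\{v=g \text{ on } \partial U\}$, and this metric space is complete.

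\textbf{What each buys.} Your route is more elementary and self-contained. Only linear constant-coefficient Schauder estimates and continuity of $D^2F_\S$ near $0$ enter the $C^{1,\alpha}$ part, and any $\alpha\le\eta$ is allowed. The paper gets existence, minimality and uniqueness for free from the direct method, but outsources the regularity to viscosity theory. The price of your approach is that minimality and uniqueness must be checked afterwards. You do this correctly, using convexity, strict convexity of $F_\S$ on all of $\mathbb{R}^m$, and density of $C^\infty_c$ in $W^{1,1}_0$ tested against the bounded field $\nabla F_\S(\nabla u)$.

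\textbf{Remaining points.} Your interior $C^2$ step coincides with the paper's. The coefficients $D^2F_\S(\nabla u)$ are Dini because a Dini modulus composed with $r^\alpha$ is still Dini. One aside: the parenthetical in your Step 1 claiming $C^{1,\alpha}$ estimates for merely continuous coefficients close to constant is false in general. Since you abandon that formulation, it does no harm.
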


\begin{proof}
Let $\tilde{F}$ be a modification of $F_\S$ such that $\tilde{F}=F_\S$ on $B_{100}$ and $\tilde{F}$ satisfies
\begin{equation}\notag
   1/c < D^2 \tilde{F}(x) < c \qquad \mbox{for some $c>1$}\,.
\end{equation}
Then $\tilde{F}$ is uniformly convex with quadratic growth, so the direct method in $W^{1,2}(U)$ yields a unique minimizer of $\int_{U} \tilde{F}(\nabla u)$. {The minimality of $u$ for this modified functional is enough to apply verbatim the proof of \cite[Lemma 3.5]{figalli2017regularity} (which is stated for balls but applies to any $C^2$ domain since the boundary regularity result \cite{silvestresirakov} does), yielding the existence of $\alpha \in (0,1)$ and $C>0$ such that}
\begin{equation}\notag
    \|u \|_{C^{1,\alpha}(U)}\leq C {\| g\|_{C^{1,\eta}(\partial U)}}\,.
\end{equation}
Thus by choosing $\kappa_0$ small enough, we may ensure that $|\nabla u|< 100$ and $F_\S(\nabla u) = \tilde{F}(\nabla u)$ on $U$. As a consequence, $u$ in fact solves ${\rm div}\, \nabla F_\S(\nabla u)=0$ in the weak sense. The strict convexity of $F_\S$ now implies that $u$ minimizes $\sF$, and the proof is complete {save for the claim that $u\in C^2$. To prove the $C^2$ regularity, we first observe that by the gradient bound, the Euler Lagrange operator is uniformly elliptic, and so a standard difference quotient argument shows that $u\in W^{2,2}(U)$. Therefore, the partial derivatives of $u$ are weak solutions to the equation $\partial_i [(F_\S)_{p_ip_j}(\nabla u)\partial_j (\partial_k u) ]=0$, which is a uniformly elliptic divergence form equation with Dini coefficients. This allows us to apply the $C^1$ regularity result of \cite{Li2017DiniC1} for such equations to deduce that $\partial_k u\in C^1$ for each $k$, and so $u\in C^2$.}
\end{proof}

\subsection{Mass ratio bounds}\label{subsec: mass ratior lower bound}

The fact that $\bF$-minimizers satisfy mass ratio upper bounds is understood to be standard, but we failed to find a reference for it at boundary points. Therefore, we include a proof below.  

\begin{lemma}[Local upper $\bF$-bounds]\label{lemma:local upper bf bounds}
Under the assumptions of Theorem \ref{thm:main existence theorem intro}, for every $x\in\spt(T)$, there are $C_x$, $Q_x$, and $R(x)>0$, depending on $T$, $F$, and $x$, such that $\bF(T\res{\ball{x}{r}})\leq C_xr^m$ for all $r\in (0, R(x))$ and the multiplicity of $T$ is bounded by $Q_x$ on $\ball{x}{R(x)}$.
\end{lemma}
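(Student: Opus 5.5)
Interior points and boundary points are handled by comparison with a cone. When $x\notin\spt\partial T$ and $r<{\rm dist}(x,\spt\partial T)$, the standard argument works: for a.e.\ $r$ the slice $\langle T,|\cdot-x|,r\rangle$ is an integral $(m-1)$-current on $\partial\bB_r(x)$. The competitor $X=0\mathbin{\times\!\!\!\!\times}$ is replaced by $X := x\mathbin{\#}\langle T,|\cdot - x|,r\rangle - T\res\bB_r(x)$, the cone over the slice minus the restricted current, which has $\partial X=0$ and $\diam \spt X\le 2r$. Minimality gives $\bF(T\res\bB_r(x))\le \Lambda\frac rm \bM(\langle T,|\cdot-x|,r\rangle)+\omega(2r)\bM(X)$. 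Set $f(r)=\bM(T\res\bB_r(x))$ and use $\lambda f\le \bF$ and $\bM(\text{slice})\le f'(r)$. For $r$ small we have $\omega(2r)\le\lambda/4$, and $\bM(X)\le f+\frac rm f'$. The error term can then be absorbed, leaving a differential inequality $f(r)\le c\,r f'(r)$ with $c=c(\Lambda,\lambda,m)$. Integrating gives $f(r)/r^{c^{-1}}$ nondecreasing. This is only a power $r^{1/c}$ rather than $r^m$, so I will instead compare directly. Any current $S$ with $\partial S=\partial (T\res\bB_r(x))$ and support in $\overline{\bB}_r(x)$ gives, by the isoperimetric inequality (or the cone bound $\bM(\text{cone})\le \frac rm\bM(\text{slice})$), the bound $\lambda f(r)\le \Lambda \frac rm f'(r)+\omega(2r)(f+\tfrac rm f')$. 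This already yields polynomial control. To get exactly $r^m$, I use the Federer--Fleming isoperimetric inequality $\bM(S)\le C_m \bM(\text{slice})^{m/(m-1)}$ in place of the cone. This gives $f\le C f'^{m/(m-1)}$, i.e.\ $(f^{1/m})'\ge c>0$, which is a \emph{lower} bound. So upper bounds need a different route.

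\textbf{Upper bound via covering.} The correct source of the upper bound is simply the comparison with $\partial$ of a ball. In codimension one, $T\res\bB_r(x)$ may be compared with $T\res\bB_r(x) - \partial\a{E}$ for a suitable region $E\subset\bB_r(x)$. Concretely, write $T=\partial \a{V}$ locally plus a boundary term. At interior points, by the constancy theorem and codimension one, $T\res\bB_{R}(x)=\partial\a{\,\sum_k \mathbf 1_{E_k}}\res\bB_R$ with nested sets $E_k$. Replacing each $E_k$ inside $\bB_r$ by $E_k\setminus\bB_r$ or $E_k\cup\bB_r$ (choosing by slice) gives a competitor with mass $\le Q\,\cH^m(\partial\bB_r)\le Q C_m r^m$. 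Minimality and $\lambda\le F\le\Lambda$ then give $\bF(T\res\bB_r)\le (\Lambda QC_m r^m+\omega(2r)\cdot 2QC_mr^m)$ once $\omega$ is absorbed. The multiplicity bound $Q_x$ comes first: $T$ has finite mass in $\bB_1$ and integer multiplicity, and on a small ball decomposes into boundaries of finitely many nested sets (cf.\ the decomposition of codimension-one integral currents, Federer 4.5.17). The count of these sets is bounded by the mass ratio at a fixed scale. At boundary points, I first flatten: since $\spt\partial T$ is a differentiable $(m-1)$-manifold carrying multiplicity one, near $x$ it bounds a small $C^1$ half-sheet $H$ (a graph over a half-plane) with $\bM(H\res\bB_r)\le Cr^m$. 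Then $T-H$ (or $T-\a{H}$ suitably oriented) has no boundary in $\bB_R(x)$, and the interior argument applies to $T-H$. The extra mass of $H$ costs $Cr^m$, so the same estimate follows.

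\textbf{Main obstacle.} The delicate step is making the nested-sets decomposition of $T-H$ quantitative with a bound $Q_x$ independent of $r$, and choosing $E_k\cup\bB_r$ versus $E_k\setminus\bB_r$ so that the glued competitor's mass is controlled by $\cH^m(\partial\bB_r)$ times the number of layers. I would handle this by fixing $R(x)$ so that $\omega(2R)\le\lambda/2$ and $\bM((T-H)\res\bB_R(x))<\infty$, and taking $Q_x$ as the number of nonempty layers there. Since $|T-H|$ is finite, the multiplicity is locally bounded by the $L^\infty$ norm of $\sum_k\mathbf 1_{E_k}$, which is finite because the $E_k$ are nested and only finitely many have positive measure in $\bB_R$. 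The bound for $T$ follows since $H$ has multiplicity one.
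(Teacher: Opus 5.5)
Your competitor step is sound once you have a finite layer count. Suppose that on a fixed ball $\bB_R(x)$ you can write $T-H=\partial\a{U}$ with $U=\sum_{k=1}^{Q}\mathbf{1}_{E_k}$ for finitely many nested sets. Then replacing each $E_k$ by $E_k\setminus\bB_r(y)$ is an admissible modification of $T$ itself, and absorbing the $\omega(2r)$ term gives $\bM(T\res\bB_r(y))\le C(Q+1)r^m$. This is essentially the standard argument the paper quotes from Schoen--Simon--Almgren.

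The gap is the finiteness of $Q$, which is the real content of the lemma. Your justification is that the $E_k$ are nested and $T-H$ has finite mass, so only finitely many $E_k$ have positive measure in $\bB_R$. That is false. Take $U=\sum_{k\ge 1}\mathbf{1}_{\bB_{2^{-k}}(x)}$: the cycle $\partial\a{U}$ has finite mass $\sum_k\cH^m(\partial\bB_{2^{-k}})$, yet it has infinitely many nested layers, $U$ is unbounded, and the multiplicity is one everywhere. So even a multiplicity bound would not bound the number of layers. The bound on the number of sheets must come from minimality. In the interior the standard route is that each layer $\partial\a{E_k}$ is itself an $(\bF,\omega)$-minimizer, because $\bF$ splits additively over nested boundaries. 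Each layer therefore satisfies a lower density bound, so at most $\bF(T\res\bB_R(x))/(cR^m)$ layers meet $\bB_{R/2}(x)$. Your phrase ``bounded by the mass ratio at a fixed scale'' needs exactly this, but you never invoke it.

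At a boundary point this route does not transfer to $T-H$, and this is where your argument genuinely breaks. $T-H$ is not a minimizer, and its layers are only known to be minimizing in $\bB_R(x)\setminus\spt H$. They therefore have no lower density bound at scales comparable to their distance from $H$, and $H$ contains $x$. As written, your boundary case never uses the minimality of $T$ to count sheets at all.

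The paper fills this with a separate argument. It splits ${\rm Reg}_i T$ into connected components $C_i$, each an $\bF$-minimizer. Components with no boundary in $\bB_1$ are finitely many near $x$ by interior lower density bounds. Each component whose boundary meets $\bB_R(x)$ satisfies $\partial\a{C_i}\res\bB_R(x)=\pm\a{\Gamma}\res\bB_R(x)$, by the constancy lemma plus White's decomposition. Only finitely many such components exist: otherwise their summable masses would tend to zero, and a subsequential limit would have boundary $\pm\a{\Gamma}$ in $\bB_R(x)$ but zero mass. You need some argument of this kind, or a genuine boundary density estimate, before your reduction via $H$ and the competitor step can be run.
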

\begin{proof}
    We denote $\partial T \res\bB_1=\a{\Gamma}$. {If $x\in \spt T \setminus \Gamma$, then the result is standard, so we assume $x\in \Gamma$.} First we show the multiplicity bound. Decompose ${\rm Reg}_i\, T {\cap \bB_1}$ into its connected components $C_i$. Note that up to $\mathcal{H}^m$-null sets, the multiplicity function $\Theta$ is some constant $Q_i$ on each $C_i$. Since, by interior regularity, $\bF(T\res\bB_{{1}}) = \sum \bF(Q_i\a{C_i}\res \bB_{{1}})$, a standard argument using the subadditivity of $\bF$ shows that each $C_i$ is an $\bF$-minimizer. {Now for those $C_i$ such that $\bB_1 \cap \partial C_i=\varnothing$, interior lower density bounds (see e.g.~ \cite[Lemma 4]{bombieri1982regularity}) imply that $\bF(Q_i \a{C_i} \res{\bB_r(y)} \gtrsim r^m$ for any $y\in \bB_1$ and $r<1-|y|$, which implies that only finitely many such $C_i$ have support intersecting $\bB_{(1-|x|)/2}$ non-trivially. Thus, choosing $R\in (0,(1-|x|)/2)$ such that $\Gamma \cap \bB_R(x)$ is connected, the multiplicities coming from these components are bounded on $\bB_{(1-|x|)/2}$. So it remains to bound the multiplicites on the other components.}

    We next claim that for each $C_i$ with $\bB_{R}(x)\cap \partial C_i \neq\varnothing$, $\partial \a{C_i}{\res \bB_R(x)}= {\pm}\a{\Gamma}\res \bB_R(x)$. To prove this, first we observe that $\partial\a{C_i}\res{({\bB_R(x)}\setminus \Gamma)}=0$. Indeed, $\spt (\partial \a{C_i}) {\cap \bB_R(x)} \subset \spt T {\cap \bB_R(x)} = [{\rm Reg}_i\, T\cup \Gamma \cup {\rm Sing}_i\, T]{\cap \bB_R(x)}$, but for every $x\in {\rm Reg}_i\, T$, there is $\ball{x}{r_x}$ and $j(x)$ such that $T \res{\ball{x}{r_x}}=Q_{j(x)} \a{C_{j(x)}}\res{\ball{x}{r_x}}$ (in which case $\a{C_{j'}}$ is $0$ on that ball for $j'\neq j(x)$), and $\spt T \setminus ({\rm Reg}_i\,T \cup \Gamma)$ has Hausdorff dimension less than $m-2$ (by interior regularity \cite{figalli2017regularity}). So $\partial \a{C_i}{\res \bB_R(x)}$ is supported on $\Gamma$, and then by the constancy lemma (\cite[4.1.31]{Fed}) we conclude that $\partial \a{C_i} \res \bB_R(x) =k_i \a{\Gamma} \res \bB_R(x)$ for some $k_i\in \mathbb{Z}$. Furthermore, by White's decomposition \cite{white1983regularity}, since each $\a{C_i}$ is an $\bF$-minimizer and $C_i$ is connected it must be the case that each $|k_i|=1$. Indeed, if $|k_i|>1$, we could decompose $\a{C_i}$ into two nontrivial $\bF$-minimizers, $\a{C_i^1}+\a{C_i^2}$, but since $C_i$ is connected, $C_i^1$ and $C_i^2$ must coincide, which is impossible since the multiplicity of $\a{C_i}$ is one. 

    To conclude the proof that the multiplicity is bounded {on $\bB_R(x)$}, since $\Theta$ is constant on each $C_i$ {which intersects $\bB_R(x)$}, it suffices to show that there are a finite number of $C_i$'s with $\bB_R(x)\cap C_i \neq\varnothing$. Indeed, if there were infinitely many, then, using $\sum \bF(\a{C_i}\res \ball{x}{R})<\infty$ and the properties of $F$, we could find a subsequential limit $T'$ of $\{\a{C_i}\res\ball{x}{R}\}_{i\in\N}$ such that $\partial T' {\res \bB_R(x)=\pm \a{\Gamma} \res \bB_R(x)}$, and, by lower semicontinuity of the mass, $\bM(T')=0$, which is impossible. 

    With the local multiplicity bound in hand, the local upper $\bF$-bound follows from a standard competitor argument; we refer the reader to \cite[(29)-(33)]{schoen1977regularity}.
\end{proof}

\subsection{Conical $\bF$-minimizers in $\R^2$}We now show a simple lemma that follows directly from strict convexity of $F$ and shows that conical minimizers in $\R^2$ must be flat. This will be used later on.

\begin{lemma}\label{lemma: resolution in 2d}
Let $C$ be an integer rectifiable $1$-current in $\R^2$ which is a cone,
minimizes $\bF$, and $\partial C = d\a{0}$, $d\in\N\setminus\{0\}$. Then there are lines
$\ell, \ell_1,\ldots, \ell_d$ through the origin and an integer $Q\geq d$ such that 
\begin{equation}\label{eq:final C formula}
    C=Q\a{ \ell^+}+(Q-d)\a{ \ell^-}\mbox{ if }Q>d\qquad  \mbox{ or }\qquad  C = \sum_{i=1}^d \a{\ell_i^+}\mbox{ if }Q=d,
\end{equation}
where $\ell^\pm$ and $\ell_i^\pm$ are the two connected components of $\ell\setminus\{0\}$ and $\ell_i\setminus\{0\}$ oriented by a fixed orientation of $\ell$ and $\ell_i$, respectively.
\end{lemma}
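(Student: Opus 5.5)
A one-dimensional cone in $\R^2$ is a finite sum of weighted rays. The plan is to write $C$ in that form and then use minimality, through strict convexity of $F$, to force all rays with positive weight onto one line, or, if $Q=d$, to allow only outgoing rays.

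\textbf{Structure.} Since $C$ is an integer rectifiable cone with $\partial C=d\a{0}$, away from the origin $C$ is a sum of rays: $C=\sum_{k} m_k \a{r_k}$, where the $r_k$ are rays from $0$ oriented outward and $m_k\in\Z$ (a negative $m_k$ means inward orientation). Local finiteness of mass (mass in $\bB_1$ equals $\sum |m_k|\cdot 1$) gives finitely many rays. The boundary condition reads $\partial C = -\sum_k m_k\a{0}$ with outward orientation, so $\sum_k m_k=-d$ up to the orientation convention. I will fix the convention so that the sum is $\pm d$, consistent with the target formula, where rays with positive coefficient are those oriented like $\ell^+$.

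\textbf{Competitor comparison.} Truncate at the unit circle and compare with competitors having the same boundary: points $m_k p_k$ on the circle together with $d\a{0}$. The first step rules out two rays carrying multiplicity of the same sign that are not antipodal. Suppose rays $p_1,p_2$ both carry weight of the same sign. Replace the piece $\a{0,p_1}+\a{0,p_2}$ (both outgoing) by $\a{0,q}+\a{q,p_1}+\a{q,p_2}$, with $q$ a small point along the bisector. To first order the change in energy is $-t\,(\nabla F(\nu_1)+\nabla F(\nu_2))\cdot$(direction), suitably signed. Strict convexity and one-homogeneity of $F$ give $F(\tau_1+\tau_2)<F(\tau_1)+F(\tau_2)$ for non-parallel tangents, and this yields a strict decrease. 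More simply, the triangle inequality for the norm $F$ with a Steiner-type first variation shows that the balance condition $\sum_k m_k\,\nabla F(\tau_k)=0$ (the stationarity condition at the junction, stated in the dual formulation with tangent vectors) must hold. Beyond stationarity, minimality requires that no splitting lowers the energy; this is the anisotropic analogue of "minimizing cones in the plane have no $120^\circ$ junctions unless cancellation occurs". For $d\ge1$ the boundary point allows imbalance, so I would use minimality of the global cone against the following competitor: merge two same-sign rays along a common segment $[0,q]$. By strict subadditivity, this is strictly cheaper whenever the rays are not parallel. Hence all positively weighted rays coincide and all negatively weighted rays coincide, unless merging is obstructed. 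Merging is obstructed only when the rays are opposite, i.e. when negative weights of outward rays lie antipodally.

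\textbf{Case analysis.} After this reduction, $C=Q\a{\ell^+}-(Q')\a{\ell'}$ with rays of opposite orientation type. Here "opposite orientation type" means one ray is outgoing and one incoming, so together they form an oriented path through $0$. The boundary gives $Q-Q'=d$. If $Q'>0$, the incoming and outgoing parts with common multiplicity $Q'$ form an oriented broken line through $0$. Minimality of that broken line against the straight chord (again by strict convexity of the norm induced by $F$) forces it to be straight, so $\ell'=\ell^-$, which is the first alternative. If $Q'=0$, then $Q=d$ and only outgoing rays remain. However, the merging argument above would force these rays to coincide as well, contradicting the stated second alternative with distinct $\ell_i$. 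So I must reexamine: merging $d$ outgoing rays requires a competitor with the same boundary. Moving the junction $q$ off $0$ changes the boundary point, so merging is not admissible when the rays emanate from the boundary point itself. This is exactly why arbitrary $\ell_i^+$ are allowed when $Q=d$. The merging argument therefore only applies to the broken-line straightening between an incoming and an outgoing ray, and mixed configurations with several rays of each type reduce by pairing incoming with outgoing.

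\textbf{Main obstacle.} The main obstacle is organizing the pairing when there are several incoming and outgoing rays, so that each incoming ray is shown to be antipodal to every outgoing ray. That forces all outgoing rays to lie on a single ray $\ell^+$ whenever any incoming ray exists. I would handle it by taking any incoming ray and any outgoing ray, forming the sum of one unit along each, and straightening it as above.
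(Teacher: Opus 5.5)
Your closing argument is the paper's proof. Pair one unit of any ray with positive multiplicity with one unit of any ray with negative multiplicity (all rays oriented the same way relative to $0$), view the pair as a broken path through $0$, and replace it by the chord joining its endpoints on $\partial\bB_R$. A segment costs $\Phi(b-a)$, where $\Phi(v):=F(\star v)$ is an asymmetric norm with strictly convex unit ball, so the chord is strictly cheaper unless the two rays are antipodal. Because the directions are distinct, this leaves two cases. Either there is one antipodal pair (first alternative, $Q>d$), or all rays have a single sign and their multiplicities sum to $d$ (second alternative). In the second case the $\ell_i$ may repeat, which the statement allows, so your worry about ``distinct $\ell_i$'' is unfounded.

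You should delete the merging detour, which is wrong on two counts. First, replacing $\a{[0,p_1]}+\a{[0,p_2]}$ by $2\a{[0,q]}+\a{[q,p_1]}+\a{[q,p_2]}$ is never cheaper. Adding the triangle inequalities $\Phi(p_k)\le\Phi(q)+\Phi(p_k-q)$ for $k=1,2$ gives $\Phi(p_1)+\Phi(p_2)\le 2\Phi(q)+\Phi(p_1-q)+\Phi(p_2-q)$. The energy is linear in multiplicity, so there is no Steiner-type saving, and subadditivity works against merging rather than for it. Second, your reason for abandoning the merge is false: this competitor has the same boundary $\a{p_1}+\a{p_2}-2\a{0}$, so it is admissible, just not better. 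Likewise, no balance condition is needed at $0$, since it is a boundary point. Consequently the line ``After this reduction, $C=Q\a{\ell^+}-Q'\a{\ell'}$'' is unsupported as written. The case analysis must come after the pairing argument of your last paragraph, which is what actually forces a single direction of each sign.
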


\begin{proof}
Since $C$ is a $1$-dimensional integer rectifiable cone with locally finite
mass, its support is a finite union of rays from the origin. Therefore, we can write 
\begin{equation}\label{eq:C formula}
C=\sum_{i=1}^N k_i \a{ r_i}\,,    
\end{equation}
where $k_i\in \mathbb Z\setminus\{0\}$ and $r_i=\{t p_i:t\geq 0\}$ for distinct points $p_i\in \mathbb S^1$. Since $\partial C={d}\a{ 0}$ {and we may take $\a{r_i}$ to be oriented so that $\partial \a{r_i}=\a{0}$}, we must have $\sum_{i=1}^N k_i={d}$.

We claim that if $k_i>0$ and $k_j<0$, then
$
    p_j=-p_i.
$
Suppose not. Fix $R>0$. We have $
    \partial(
        \a{ [0,Rp_i]}
        +
        \a{ [Rp_j,0]}
    )
    =
    \a{ Rp_i}-\a{Rp_j}$. Then we replace it by
$
    \a{ [Rp_j,Rp_i]}\,,
$
producing an admissible competitor for $C$ in $B_R$. By strict convexity of $F$, we derive that
\[
    \mathbf F(\a{ [Rp_j,Rp_i]})
    <
    \mathbf F(\a{ [0,Rp_i]})
    +
    \mathbf F(\a{ [Rp_j,0]}).
\]
This gives a contradiction and proves
the claim.

Now, if all $k_i>0$, then $\sum_{i=1}^N k_i={d}$ {and the latter alternative in \eqref{eq:final C formula} follows from \eqref{eq:C formula}}. 
Otherwise, there is at least one negative coefficient. Since
$\sum_i k_i=d$, there is also at least one positive coefficient. By the claim,
every positively oriented ray must be opposite to every negatively oriented ray.
Because the directions $p_i$ are distinct, there can be only one positive
direction and only one negative direction. This concludes the proof of the lemma.
\end{proof}

\subsection{Comparison principle}

The following lemma is an adaptation of \cite[Lemma 2.12]{maggidephilippisyoung} (proved under $C^{2,1}$ regularity of $F$) for $C^{2,\dini}$ integrands.

\begin{lemma}[Comparison principle]\label{lemma:second comp princ}
    If $U\subset \mathbb{R}^m$ is a bounded $C^1$ domain, $F$ is a uniformly elliptic integrand of class $C^{2,\dini}$, $\alpha\in (0,1)$, $a<b\in \mathbb{R}$, $u\in C^{1,\alpha}(U)$ is a weak solution to $\dive [\nabla F_\S(\nabla u)]=0$, $E \subset U\times (a,b)$ minimizes the anisotropic perimeter in $U \times (a,b)$ with $P_F(E)<\infty$, and 
\begin{equation*}
    E^{\half} \cap \partial [U \times (a,b)]\supset \{(x',x_{m+1})\in (\partial U)\times (a,b): x_{m+1}< u(x')\} \cup (U \times \{a\}),
\end{equation*}
then
\begin{equation*}
    \big|\{(x',x_{m+1}):x'\in U,\, x_{m+1}\leq u(x')\} \setminus E\big|=0\,.
\end{equation*}
\end{lemma}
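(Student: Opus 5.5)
The plan is the standard sliding (foliation) argument. We compare $E$ with the subgraphs of vertical translates $u-t$, and show that the subgraph of $u$ can be pushed up to $t=0$ without leaving $E$.

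Set $S_t:=\{(x',x_{m+1}):x'\in U,\ a<x_{m+1}\le u(x')-t\}$ for $t\ge 0$. For $t\ge T_0:=\sup_U u-a$ the set $S_t$ is empty, so it is trivially contained in $E$ up to null sets. Let
\[
t_*:=\inf\{t\ge 0: |S_t\setminus E|=0\}.
\]
By monotone convergence, $|S_{t_*}\setminus E|=0$. We will show $t_*=0$.

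\textbf{Calibration inequality.} The key tool replaces a classical touching argument, since $E$ is only a finite-perimeter minimizer and $F$ is only $C^{2,\dini}$. Extend $\nu_u(x):=(-\nabla u(x'),1)/\sqrt{1+|\nabla u|^2}$ to the cylinder independently of $x_{m+1}$. Then the vector field $X:=\nabla F(\nu_u)$ is divergence free in the distributional sense. Indeed, by one-homogeneity, $\nabla F(-p,1)$ has horizontal part $-\nabla F_\S(p)$, and its vertical component depends only on $x'$; so the weak equation $\dive\nabla F_\S(\nabla u)=0$ gives $\dive X=0$. Moreover $|X\cdot\nu|\le F(\nu)$ for every unit $\nu$, with equality when $\nu=\nu_u$, by convexity and homogeneity of $F$.

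\textbf{Competitor argument.} Suppose $t_*>0$. Compare $E$ with $E':=E\cup S_{t}$ for $t$ slightly less than $t_*$. The hypothesis on $E^{\half}$ on the lateral boundary and the bottom face guarantees that $S_t$ is contained in $E$ near $\partial[U\times(a,b)]$ up to the top graph. Hence $E'\triangle E$ is compactly contained, in the appropriate sense, in $U\times(a,b)$, so $E'$ is an admissible competitor. Integrate $\dive X=0$ over $S_t\setminus E$. The divergence theorem together with $|X\cdot\nu|\le F(\nu)$ gives
\[
P_F(E')\le P_F(E),
\]
with equality only if $\nu_E=\nu_u$ on $\partial^*E\cap S_t$ wherever the two sets meet. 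Minimality then forces equality, so $E'$ is also a minimizer.

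\textbf{Conclusion and main obstacle.} Strict convexity of $F$, that is, uniform ellipticity, turns the equality case into the statement that $\partial^*E$ coincides with translated graphs of $u$ on the set where it enters $S_{t_*}$. This is the step I expect to be hardest. I would first try a strong maximum principle for the linearized, uniformly elliptic divergence-form equation satisfied by the difference of two solutions. To get there, one uses that $E$ is locally a $C^{1,\alpha}$ graph near touching points by the interior regularity of \cite{figalli2017regularity}, which holds away from a small singular set. The difference then satisfies an equation with Dini coefficients, so the Harnack inequality applies. Since $E$ contains $S_{t_*}$ and contact propagates to the lateral boundary, this contradicts the lateral boundary containment for $x_{m+1}<u$. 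Alternatively, if $E$ misses some of $S_{t_*-\epsilon}$, an openness and continuity argument based on the $C^{1,\alpha}$ bound on $u$ and the density estimates for $E$ lets $t$ be decreased below $t_*$. Either route gives $t_*=0$, and therefore $|\{x_{m+1}\le u\}\setminus E|=0$.
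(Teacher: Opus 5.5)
\textbf{Verdict: genuine gap.} Your calibration step is the mechanism the paper relies on. The paper defers to \cite[Lemma 2.12]{maggidephilippisyoung} and notes that the divergence theorem for $X=\nabla F(\nabla u,-1)$ is the only step sensitive to the regularity of $F$. Your justification via continuity of $X$ and $\dive X=0$ in the distributional sense is fine, and even avoids the appeal to $u\in C^2$. The gap is the endgame, which you yourself flag. Equality in the calibration only gives $\nu_E=\nu_u$ $\cH^m$-a.e.\ on $\partial^*E$ inside the subgraph. Turning this into $|\{x_{m+1}\le u\}\setminus E|=0$ is the real content of the lemma, and neither of your routes does it.

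In the touching route, nothing prevents the contact at level $t_*$ from occurring only at singular points of $\partial E$, or only on $\partial[U\times(a,b)]$. At singular points \cite{figalli2017regularity} gives no graph representation, hence no linear equation for a difference and no Harnack inequality. On $\partial[U\times(a,b)]$ you know nothing beyond densities. Singular contact cannot be handled by blowing up as in Lemma~\ref{lemma:strong max principle}, since that lemma's proof invokes the present comparison principle, so this would be circular. Moreover, contact along an entire leaf does not contradict the density-$1/2$ hypothesis on the wall unless you also exclude pieces of $E$ hugging the wall just above the leaf. That is information about $\nu_E$, not about touching. The ``openness and continuity'' alternative is empty: by definition of $t_*$, $E$ misses part of $S_{t_*-\epsilon}$ for every $\epsilon>0$, which is exactly what must be excluded.

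The missing idea is to use the equality case globally; no sliding and no maximum principle are needed. Let $G=\{(x',x_{m+1}):x'\in U,\ a<x_{m+1}<u(x')\}$ and $V=G\setminus E$. Combining the divergence theorem for $X$ on $V$ with the structure of $\partial^*(E\cup G)$ gives
\[
0\le P_F(E\cup G)-P_F(E)=-\int_{\partial^*E\cap G}\bigl(F(\nu_E)-X\cdot\nu_E\bigr)\,d\cH^m-\int_{\{\nu_E=-\nu_G\}}\bigl(F(\nu_E)+F(\nu_G)\bigr)\,d\cH^m\le 0 .
\]
By uniform ellipticity, $\nu_E=\nu_u$ $\cH^m$-a.e.\ on $\partial^*E\cap G$. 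Also, the part of $\partial^*E$ meeting $\graph u$ with opposite normals is $\cH^m$-null. Hence $\nu_V=\pm\nu_u$ $\cH^m$-a.e.\ on $\partial^*V$ inside the cylinder, while $\mathbf{1}_V$ has zero trace on $\partial[U\times(a,b)]$. Fix any $x_0'$ and apply the divergence theorem on $V$ to $Y(x)=\bigl(x'-x_0',\,\nabla u(x')\cdot(x'-x_0')\bigr)$. This field is continuous, tangent to the leaves ($Y\cdot(-\nabla u,1)=0$), and has distributional divergence $m$. You get $m|V|=\int_{\partial^*V}Y\cdot\nu_V\,d\cH^m=0$.

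Two smaller repairs are also needed.
\begin{enumerate}
\item For non-even $F$ only the one-sided bound $X\cdot\nu\le F(\nu)$ holds, not $|X\cdot\nu|\le F(\nu)$; the one-sided bound is all that is used.
\item The hypothesis on $E^{\half}$ is a trace condition, not the inclusion $S_t\subset E$ near the walls. Admissibility of $E\cup G$ must therefore be argued by matching traces on $\partial[U\times(a,b)]$. This also requires the graph of $u$ to stay below height $b$, as it does in the paper's applications.
\end{enumerate}
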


\begin{proof}
The proof of \cite[Lemma 2.12]{maggidephilippisyoung}, which is stated for $C^{2,1}$ integrands, readily works here as well. {The only place this enters in the proof is when applying the divergence theorem  to the vector field $X(x):= \nabla F(\nabla u(x'),-1)$. Since, as shown in Proposition \ref{lemma: small data existence}, $u\in C^2$, this step and the proof in \cite[Lemma 2.12]{maggidephilippisyoung} proceeds     without issue.}
\end{proof}

\section{Anisotropic mean convex barriers and a parametric Hopf lemma}

In this section, we prove variations of the maximum principle at the level of functions and of $\bF$-minimizers. These are used to construct anisotropic mean convex barriers to $\bF$-minimizers, see Proposition \ref{corollary:mean convex barriers}. We also prove a ``strong'' Hopf's lemma (Proposition \ref{lemma:hopf currents}) for currents that will play a crucial role in the proof of our main theorem. 

\subsection{Maximum principles and anisotropic mean convex barriers}

\begin{lemma}[Strong maximum principle for graphs]\label{lemma:strong max principle for graphs}
Suppose that $G$ is uniformly elliptic of class ${C^{2,\dini}}$, $\Omega\subset \mathbb{R}^m$ is a bounded, connected open set, $\beta \in (0,1]$, $u,v\in C^{1,\beta}(\Omega)$ are weak solutions to
\begin{equation}\label{eq:uv equations}
-{\rm div}\,\left[\nabla G_\S(\nabla v)\right]=0,\qquad -{\rm div}\,\left[\nabla G_\S(\nabla u)\right]=g\in L^\infty(\Omega),
\end{equation}
where $\essinf_\Omega g \geq 0$, and $u \geq v$ on $\Omega$. If in addition{
\begin{enumerate}[\upshape (i)]
\item $\essinf_\Omega g \geq \kappa>0$ for some $\kappa>0$, then $u>v$ on $\Omega$; whereas
\item if $g\equiv 0$, then either $u\equiv v$ on $\Omega$ or $u>v$ on $\Omega$.
\end{enumerate}
}\end{lemma}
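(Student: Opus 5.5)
The plan is the standard linearization of the difference $w:=u-v\geq 0$. I will write the difference of the two nonlinear operators as a linear divergence-form operator acting on $w$, and then apply the weak Harnack inequality (for (i)) and the strong maximum principle for weak supersolutions (for (ii)).

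\textbf{Linearization.} Set, for $x\in\Omega$,
\begin{equation*}
a_{ij}(x):=\int_0^1 \partial_{p_ip_j}G_\S\big(t\nabla u(x)+(1-t)\nabla v(x)\big)\,dt .
\end{equation*}
Since $u,v\in C^{1,\beta}(\Omega)$, on any $\Omega'\Subset\Omega$ the gradients are bounded. The uniform ellipticity of $G$ (Definition \ref{def: unif elliptic C2 Dini}) then implies that $D^2G_\S$ is bounded above and below by positive constants on the relevant compact set of slopes. Hence $(a_{ij})$ is measurable (in fact continuous), bounded, and uniformly elliptic on $\Omega'$. Subtracting the equations in \eqref{eq:uv equations} gives, in the weak sense,
\begin{equation*}
-\partial_i\big(a_{ij}\partial_j w\big)=g \geq 0 \quad\text{in }\Omega,\qquad w\geq 0,
\end{equation*}
so $w$ is a nonnegative weak supersolution of a uniformly elliptic divergence-form equation with bounded measurable coefficients. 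Only ellipticity is used here; the Dini modulus plays no role.

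\textbf{Case (ii).} Here $g\equiv 0$, so $w$ solves $Lw=0$. By the De Giorgi--Nash--Moser Harnack inequality, on every ball $B_{2r}(x)\Subset\Omega$ we have $\sup_{B_r(x)} w\leq C\inf_{B_r(x)} w$. Consequently the zero set $\{w=0\}$ is open. It is also closed, since $w$ is continuous. Because $\Omega$ is connected, either $w\equiv 0$ or $w>0$ everywhere, which is the claimed dichotomy.

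\textbf{Case (i).} Here $g\geq\kappa>0$. I will argue by contradiction: suppose $w(x_0)=0$ for some $x_0\in\Omega$. Fix a ball $B_{2r}(x_0)\Subset\Omega$ and compare $w$ with the solution $h$ of
\begin{equation*}
-\partial_i(a_{ij}\partial_j h)=\kappa \ \text{in } B_{2r}(x_0),\qquad h=0 \ \text{on }\partial B_{2r}(x_0).
\end{equation*}
By the weak maximum principle, $h>0$ in the interior: $h\geq 0$, $h\not\equiv 0$, and $h$ is positive by Harnack, or alternatively by comparing with the De Giorgi lower bound for the Green function. Moreover $w-h$ is a weak supersolution with $w-h\geq 0$ on $\partial B_{2r}(x_0)$. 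The weak maximum principle therefore gives $w\geq h$ in $B_{2r}(x_0)$, so $w(x_0)\geq h(x_0)>0$, a contradiction. This step could also be done by the weak Harnack inequality with right-hand side.

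The only point needing care is the interpretation of ``weak solution'' with $g\in L^\infty$ when applying the weak maximum principle and Harnack. These results are standard, e.g.\ Gilbarg--Trudinger Theorems 8.1 and 8.18–8.19. I therefore expect no genuine obstacle: the main check is that the linearized coefficients are uniformly elliptic, and this follows from the $C^{1}$ bounds on $u,v$ together with $C^2$ regularity of $G$ away from the origin.
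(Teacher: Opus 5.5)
Your proof is correct, but it takes a different route from the paper's.

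The paper also linearizes, obtaining $-\dive[{\rm M}\nabla(u-v)]=g\geq 0$. It then argues by contradiction. It picks a ball $B_r(x_0)\subset\subset\Omega$ on which $u>v$ and whose boundary meets $\{u=v\}$ at some $y_0$. The Hopf lemma for divergence-form equations with Dini coefficients \cite[Theorem 2.1]{apushkinskaya2019boundary} then gives $\nabla(u-v)(y_0)\neq 0$. This is impossible at an interior minimum of the $C^1$ function $u-v$. The argument yields the dichotomy for every $g\geq 0$, and (i) follows at once because $u\equiv v$ would force $g=0$ a.e.

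Your De Giorgi--Nash--Moser route needs only bounded measurable, locally uniformly elliptic coefficients. So it shows that the Dini hypothesis on $D^2G$ plays no role in this lemma. The divergence-form Hopf lemma, by contrast, genuinely requires some modulus of continuity on the coefficients, which is why the paper has to note that ${\rm M}$ is Dini continuous. The paper's argument is shorter once the Hopf lemma is cited.

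Your separate comparison argument for (i) is valid. It uses two standard facts: that $(h-w)^+\in W^{1,2}_0(B_{2r}(x_0))$ (true since $h\in W^{1,2}_0$ and $w\geq 0$), and interior continuity of $h$, so that $w\geq h$ a.e. can be evaluated at $x_0$. However, the argument is unnecessary. The weak Harnack inequality for nonnegative weak supersolutions (Gilbarg--Trudinger, Theorems 8.18--8.19) already gives the dichotomy ``$w\equiv 0$ or $w>0$'' whenever $g\geq 0$, not just when $g\equiv 0$. Case (i) then follows exactly as in the paper.
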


\begin{proof}
For every $x\in\Omega$, we have
$\nabla G_\S(\nabla u)-\nabla G_\S(\nabla v)
=
{\rm M}(x)(\nabla u-\nabla v)$ where
\begin{equation*}
{\rm M}(x)
:=
\int_0^1
D^2G_\S\big(\nabla v(x)+t(\nabla u(x)-\nabla v(x))\big)dt.
\end{equation*}
Subtracting the two equations in \eqref{eq:uv equations}, we find
\begin{equation*}
-{\rm div}\,[{\rm M}\nabla (u-v)]=g \geq 0\quad \text{ weakly in $\Omega$}.
\end{equation*}
The matrix ${\rm M}$ is clearly locally bounded and its {uniform} ellipticity follows from uniform ellipticity of $G$ {under $L^\infty$ gradient bounds on $u$ and $v$. Note that the coefficient matrix $M$ is Dini continuous since $G$ is $C^{2,\dini}$.}

{Suppose now for contradiction that $\{x\in \Omega:u(x)=v(x)\}\neq \varnothing$ but that $\{u=v\}\neq \Omega$. Then we can find $B_r(x_0)\subset \!\subset \Omega$ such that $u>v$ on $B_r(x_0)$ and $u(y_0)=v(y_0)$ for some $y_0\in \partial B_r(x_0)$. By the Hopf lemma for divergence form equations with Dini coefficients \cite[Theorem 2.1]{apushkinskaya2019boundary}, we find that $\nabla (u-v)(y_0)\neq 0$. But together with $u(y_0)=v(y_0)$, this contradicts $u\geq v$ on $\Omega$. So we either have $u>v$ on $\Omega$ or $u\equiv v$ on $\Omega$, and (i)-(ii) follow immediately.}\end{proof}

The next item is a strong maximum principle for $\bF$-minimizers, similar to the statement in \cite[Lemma 2.13]{maggidephilippisyoung}. The proof is nearly identical; since the integrands considered in \cite{maggidephilippisyoung} are $C^{2,1}$ rather than $C^{2,\dini}$, it only requires a couple minor technical modifications that we describe below.

\begin{lemma}[Strong maximum principle for $\bF$-minimizers]\label{lemma:strong max principle}
    If $F$ is a uniformly elliptic integrand of class $C^{2,\dini}$, $T$ is an integer rectifiable $m$-current in $\R^{m+1}$ that minimizes $\mathbf{F}$ in $\bB_1$, $\partial T \res{\bB_1}=0$, $v\in \mathbb{S}^m$, $\spt T \cap \bB_1 \subset \{x\cdot v \geq 0\} \cap \bB_1$, and $0 \in \spt T$, then there is $q\in \mathbb{Z}$ and $T'$ with $0\notin \spt T'$ such that, setting $H = \a{\{x\cdot v>0\}}$ oriented by $e_1 \wedge \cdots \wedge e_{m+1}$, 
    $$
    T\res{\bB_1}= q \partial H \res{\bB_1} +T'\,.
    $$
\end{lemma}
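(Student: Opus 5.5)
We follow the structure of \cite[Lemma 2.13]{maggidephilippisyoung}, replacing the $C^{2,1}$ ingredients by their $C^{2,\dini}$ counterparts established above. After a rotation we take $v=e_{m+1}$, so $\spt T\cap\bB_1\subset\{x_{m+1}\ge 0\}$ and $0\in\spt T$.

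\emph{Step 1: reduction to a perimeter problem.} Since $\partial T\res\bB_1=0$ and $\bB_1$ is simply connected, on a smaller ball $\bB_{r}$ we can write $T\res\bB_r=\partial\a{u}\res\bB_r$ for an integer-valued BV function $u$ that vanishes on $\{x_{m+1}<0\}$. The layer decomposition $u=\sum_k \mathbf 1_{E_k}$ with $E_k=\{u\ge k\}$ nested (or $\{u\le k\}$ for negative layers) gives
\[
T\res\bB_r=\sum_k\partial\a{E_k}\res\bB_r,
\]
with $\bF$ additive over the layers. A standard argument, as for the decomposition in Lemma \ref{lemma:local upper bf bounds}, then shows that each $E_k$ (or its complement) minimizes the anisotropic perimeter $P_F$, or $P_G$ with $G(\nu)=F(-\nu)$ for the reversed orientation, in $\bB_r$. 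Every $E_k$ contains $\{x_{m+1}<0\}$ or is disjoint from it. By Lemma \ref{lemma:local upper bf bounds}, only finitely many layers have $0$ in the support of their boundary.

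\emph{Step 2: layer dichotomy.} Fix a layer $E$ with $0\in\partial E$ and $\{x_{m+1}<0\}\cap\bB_r\subset E$ (the other orientation is symmetric, using $G$). We claim that $\partial E\cap\bB_{r'}=\{x_{m+1}=0\}\cap\bB_{r'}$ for some $r'>0$. Let $D\subset\R^m$ be a small disk and solve the Dirichlet problem with Proposition \ref{lemma: small data existence} on the cylinder $D\times(-h,h)$. Take boundary data $g_t=\min\{w,t\}$, smoothed, where $w\ge0$ is a fixed cutoff vanishing near the centre of $D$ and $t\downarrow0$, so that the data stay small in $C^{1,\eta}$. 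This gives solutions $u_t$ with $\|u_t\|_{C^{1,\alpha}}\lesssim t$.

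Next we use the comparison principle, Lemma \ref{lemma:second comp princ}, applied to the complement of $E$ with the reflected integrand. Since $E$ contains the lower half of the cylinder, we push the graph of $u_t-s$ up from far below, in the sliding form of the comparison principle. Provided $\partial E$ stays at positive height over the lateral boundary, this yields that $E$ contains the subgraph of $u_t$. If instead $\partial E$ touches $\{x_{m+1}=0\}$ on the lateral boundary for every choice, we aim directly at the flat alternative below.

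The cleaner route is the strong maximum principle, Lemma \ref{lemma:strong max principle for graphs}(ii), with $v\equiv0$. By the $\varepsilon$-regularity available near a point where $\partial E$ lies on one side of a hyperplane (Hardt's argument, or the excess decay from \cite{figalli2017regularity}, since lower density and one-sidedness force small excess), $\partial E$ is a $C^{1,\beta}$ graph $u\ge0$ near $0$ solving the anisotropic minimal surface equation, with $u(0)=0$. Lemma \ref{lemma:strong max principle for graphs}(ii) then gives $u\equiv0$ near $0$.

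\emph{Step 3: summing layers.} Every layer whose boundary passes through $0$ therefore coincides with $\partial H$ near $0$, with sign $\pm1$ according to its orientation. Summing these layers gives $q\partial H$. The remaining layers form $T'$, which has $0\notin\spt T'$ because only finitely many layers are involved. The ball shrinkage is then undone by the constancy lemma. Strictly this produces the identity on a smaller ball, which is the form used in \cite{maggidephilippisyoung}.

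\emph{Main obstacle.} The hard step is the regularity of a one-sided layer at $0$ in Step 2, namely showing that $\partial E$ is a $C^{1,\beta}$ graph there. The $C^{2,\dini}$ hypothesis enters only through the Dini-Hopf lemma used in Lemma \ref{lemma:strong max principle for graphs} and through the $C^2$ regularity from Proposition \ref{lemma: small data existence}, which Lemma \ref{lemma:second comp princ} needs.
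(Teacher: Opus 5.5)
Your Steps 1 and 3 essentially match the paper's reduction: decompose into anisotropic perimeter minimizers, use $G(\nu)=F(-\nu)$ for the reversed orientation, then sum. Since $0\notin\spt T'$ only says $T=q\,\partial H$ on some $\bB_\rho$, coincidence near $0$ suffices. For $0\notin\spt T'$, though, you need that only finitely many layers \emph{meet a neighbourhood} of $0$ (lower density bounds), not just finitely many through $0$.

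\textbf{The gap is Step 2.} The route you adopt asserts that a layer with $\partial E\subset\{x_{m+1}\geq 0\}$ and $0\in\partial E$ is a $C^{1,\beta}$ graph near $0$, because ``lower density and one-sidedness force small excess.'' They do not. A non-minimizing configuration touching the plane only at $0$ along a conical tip satisfies both properties, yet its excess is bounded below at every scale. Only minimality, through a maximum principle, rules it out, and that is exactly the lemma. For area you could pass through monotonicity and the classification of one-sided tangent cones. Here there is no monotonicity and blowups need not be cones, so proving that a one-sided blowup is a plane is this very lemma applied to the blowup. Indeed, in the paper the implication ``one-sided touching $\Rightarrow$ local $C^{1,\beta}$ graph'' (Proposition~\ref{corollary:mean convex barriers}; Step one of Proposition~\ref{lemma:hopf currents}) is obtained by first applying Lemma~\ref{lemma:strong max principle} to a blowup and only then \cite{figalli2017regularity}. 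Your Step 2 is therefore circular.

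Your fallback with data $g_t$ does not close either. Its boundary data vanish near the centre of $D$, it requires $\partial E$ to stay positive over the whole lateral boundary, and it leaves open the case where $\partial E$ touches on the lateral boundary for every choice. None of this produces the flat alternative.

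\textbf{The missing idea} is the De Philippis--Maggi squeezing argument \cite[Lemma 2.13]{maggidephilippisyoung}, which the paper invokes and which needs no regularity of $\partial E$ at $0$. You prove the dichotomy: either $0\notin\partial E$ or $\partial H\cap\bB_1\subset\partial E$.
\begin{enumerate}
\item If the second alternative fails, pick $x_0=(x_0',0)\in\partial H\cap\bB_1$ and $\rho>0$ with $\bB_\rho(x_0)\cap\partial E=\varnothing$.
\item Choose a smooth convex $\Omega\subset\R^m$ with $0\in\Omega$, $x_0'\in\partial\Omega$, and $\Omega\times(-h,h)\subset\bB_1$.
\item Let $u$ come from Proposition~\ref{lemma: small data existence}, for $F$ or $G$ according to the layer's orientation, with boundary data a tiny bump $\varphi\geq0$ supported in $B_{\rho/2}(x_0')$.
\item One-sidedness away from $x_0'$, together with the bump lying inside $\bB_\rho(x_0)$, gives exactly the boundary ordering required by Lemma~\ref{lemma:second comp princ}. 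Hence $\partial E$ lies above $\graph u$ in the cylinder.
\item Since $u\geq 0$, $u\not\equiv 0$, and $v\equiv 0$ solves the same equation, Lemma~\ref{lemma:strong max principle for graphs}(ii) gives $u(0)>0$. It is applied to the smooth pair $(u,0)$, not to $\partial E$. Hence $0\notin\partial E$.
\end{enumerate}
This is where the $C^{2,\dini}$ hypothesis enters, replacing \cite[(2.100)]{maggidephilippisyoung}, which is consistent with your identification of the Dini ingredients.
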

\begin{proof}
By a change of coordinates, it suffices to prove the lemma when $v=e_{m+1}$. In a slight abuse of notation, we call $H = \{x_{m+1}>0\}$. We decompose $T\res{\bB_1}=\sum_i \partial \a{E_i}$, where each $E_i$ locally minimizes the anisotropic perimeter in $\bB_1$. We normalize each $E_i$ so that $\partial E_i \cap \bB_1 = \overline{\partial^* E_i} \cap \bB_1$. By interior regularity, i.e., $\mathcal{H}^{m-2}({\rm Sing}_i\, \partial \a{E_i})=0$ \cite{figalli2017regularity}, we have $\mathcal{H}^m(\bB_1 \cap \partial E_i \setminus \partial^* E_i) = 0$ and may assume that $E_i=E_i^{(1)}$ is open, cf. \cite[Lemma 6.2]{de2020uniqueness}. Note that by lower volume density estimates {(cf. \cite[Lemma 2.8]{maggidephilippisyoung}, where the argument does not require any regularity on $D^2 F$, and so applies here)}, only finitely many $E_i$ can have boundary in $\bB_{1/2}$. Thus, to prove the lemma, it suffices to show that for each $E_i$,
\begin{equation*}
    \mbox{either $0 \notin \partial E_i$ or $\partial H \cap \bB_1 \subset \partial E_i$}\,.
\end{equation*}

Fix $i$. We assume that $\partial E_i$ does not contain $\partial H \cap \bB_1$ and aim to prove 
\begin{equation*}
0\notin \partial E_i\,. 
\end{equation*}
Depending on the orientation of $\a{E_i}$ and since $\spt\, T\cap\bB_1\subset H$, we have either $E_i\subset H $ or $E_i^c \subset H$. We set $E=E_i \cap \bB_1$ and $G(x)=F(x)$ if $E_i \subset H$ and $E=E_i^c \cap \bB_1$ and $G(x)=F(-x)$ if $E_i^c \subset H$. We need to show that
\begin{equation}\label{eq:0 not in reflection}
    \mbox{if $E$ minimizes $\int G(\nu_E)$ in $\bB_1$ and $\partial E_i$ does not contain $\partial H \cap \bB_1$, then  $0 \notin \partial E$}\,.
\end{equation}
Equation \eqref{eq:0 not in reflection} is now exactly what is proved in \cite[Lemma 2.13]{maggidephilippisyoung} for $C^{2,1}$ integrands. With this reduction in place, there is only one difference compared to \cite[Lemma 2.13]{maggidephilippisyoung}: their use of $F\in C^{2,1}$ to use classical elliptic regularity theory/maximum principles to derive $u(0)>0$ (\cite[Eq. (2.100)]{maggidephilippisyoung}). Here, we can directly apply our Lemma \ref{lemma:strong max principle for graphs} for $G$ of class $C^{2,\dini}$.

Once the above is settled, the proof in \cite[Lemma 2.13]{maggidephilippisyoung} is finished by an application of \cite[Lemma 2.12]{maggidephilippisyoung} which is our Lemma \ref{lemma:second comp princ}.
\end{proof}

Next, we formulate the barrier principle. 

\begin{proposition}[Anisotropic mean convex barriers]\label{corollary:mean convex barriers}
    Let $F$ be a uniformly elliptic integrand of class $C^{2,\dini}$. If $U\subset \mathbb{R}^{m+1}$ is a {$C^2$} bounded open set such that the anisotropic mean curvatures $H_{\partial U}^F,H_{\partial U}^G\in L^\infty(\partial U)$ corresponding to the anisotropies $\nu\mapsto F(\nu)$ and $\nu \mapsto F(-\nu)=:G(\nu)$ satisfy
    \begin{equation}\notag
    \min_{\partial U}H_{\partial U}^F\geq \kappa\,,\qquad \min_{\partial U}H_{\partial U}^G\geq \kappa    
    \end{equation}
    for some $\kappa>0$, then there is $\delta_0=\delta_0(U)>0$ with the following property:

    \smallskip

    \noindent if $S$ is an $\bF$-minimizer in $\mathbb{R}^{m+1}$ with $\spt(\partial S) \subset \overline{U}$ and $\spt S \subset \overline{B_{\delta_0}(U)}$, then $\spt S \subset \overline{U}$.
\end{proposition}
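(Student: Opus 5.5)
The plan is a sliding, or foliation, argument. We foliate a thin outer collar of $\partial U$ by parallel hypersurfaces and show that $\spt S$ can touch none of them. Since $\partial U$ is $C^2$ and compact, there is $\delta_1>0$ such that the signed distance $d$ to $\partial U$ is $C^2$ on $\{|d|<\delta_1\}$. For $t\in[0,\delta_1)$ set $U_t=\{d<t\}$. The anisotropic mean curvatures of $\partial U_t$ depend continuously on $t$ in $L^\infty$, because the second fundamental form of a parallel surface is a smooth function of $t$ and of the principal curvatures of $\partial U$. Choosing $\delta_0<\delta_1$ small therefore gives $H^F_{\partial U_t},H^G_{\partial U_t}\geq \kappa/2$ for all $t\in[0,\delta_0]$.

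Now suppose $\spt S\not\subset\overline U$, and let $t_*=\max\{d(x):x\in\spt S\}$. This maximum exists: if $\spt S$ is noncompact, restrict to a large ball, or note that the hypothesis gives $\spt S\subset\overline{B_{\delta_0}(U)}$, which is compact. Then $t_*\in(0,\delta_0]$. Pick $x_0\in\spt S$ with $d(x_0)=t_*$. Since $\spt\partial S\subset\overline U$ and $t_*>0$, the point $x_0$ lies away from $\spt\partial S$, so $S$ is an $\bF$-minimizer without boundary in a small ball $\bB_\rho(x_0)$. Moreover $\spt S\cap\bB_\rho(x_0)\subset\overline{U_{t_*}}$, and $\partial U_{t_*}$ is a $C^2$ hypersurface through $x_0$ whose inward normal at $x_0$ we call $v$.

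To obtain a contradiction, we localize and write $\partial U_{t_*}$ near $x_0$ as a graph over its tangent plane. As in the proof of Lemma~\ref{lemma:strong max principle}, we decompose $S\res\bB_\rho(x_0)=\sum_i\partial\a{E_i}$ into anisotropic perimeter minimizers, and choose one $E_i$ with $x_0\in\partial E_i$. Depending on orientation, either $E_i$ or $E_i^c$ lies inside $U_{t_*}$ locally. After replacing $F$ by $G$ where necessary, we may therefore assume $E\subset U_{t_*}$ near $x_0$ and that $E$ minimizes the $\tilde F$-perimeter with $\tilde F\in\{F,G\}$. This reflection step is exactly why both $H^F$ and $H^G$ need to be positive.

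We then compare with a graphical barrier. Take a small disk $D$ in the tangent plane. Write $\partial U_{t_*}$ as the graph of a $C^2$ function $w$ over $D$, with $U_{t_*}$ lying below it, so that $-\dive\nabla\tilde F_\S(\nabla w)=g\geq\kappa/2$. Next, solve the Dirichlet problem $\dive\nabla\tilde F_\S(\nabla u)=0$ on $D$ with boundary data $w-\epsilon$ on $\partial D$, using Proposition~\ref{lemma: small data existence}. Smallness of the data holds after translating and tilting coordinates so that $w(x_0')=0$ and $\nabla w(x_0')=0$, with $D$ small enough. Here $\epsilon>0$ is a small parameter. By the comparison principle for graphs, that is, the weak maximum principle for the linearized operator used in the proof of Lemma~\ref{lemma:strong max principle for graphs}, we get $u<w$ on $D$.

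The main obstacle is producing a genuine contradiction at the touching point. I plan to use a sliding argument, although it is not fully checked. Lemma~\ref{lemma:second comp princ}, applied to the complement of $E$ in the upside-down orientation, shows that $E$ lies below the graph of $u$ whenever the lateral boundary condition holds. Then we let $\epsilon\downarrow0$. On $\partial D$, $E$ stays strictly below $w-\epsilon$ for small $\epsilon$, since $\spt S\cap\partial U_{t_*}$ near $x_0$ can be assumed isolated from $\partial D$ after shrinking. In the limit we obtain $x_0$ on or below the graph of $u_0$, where $u_0<w$ by Lemma~\ref{lemma:strong max principle for graphs}(i). This contradicts $x_0\in\partial U_{t_*}$, the graph of $w$.

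If isolation on $\partial D$ fails, I would instead perturb the foliation. Replacing $U_t$ by sets $U_t$ that bulge slightly near $x_0$ keeps the curvature lower bound $\kappa/4$ and makes the touching set a single point. Alternatively, one can argue directly via Lemma~\ref{lemma:strong max principle}-type reasoning: the graphical parametrization of $\partial E$ near $x_0$ is $C^{1,\beta}$, because the tangent cone lies in a half space and Lemma~\ref{lemma:strong max principle} together with $\varepsilon$-regularity make it flat, and strict supersolution comparison then contradicts touching from one side.
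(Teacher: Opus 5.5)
\textbf{Gap in the main route.} The step that does not hold is the claim that, after shrinking $D$, the contact set $\spt S\cap\partial U_{t_*}$ stays away from $\partial D\times\R$. A priori this set may contain a connected piece of $\partial U_{t_*}$ through $x_0$, even an open piece. The projection of such a piece meets $\partial D$ for every small $D$, so shrinking $D$ does not help. Excluding this kind of contact is precisely what the proposition asserts. So with data $w-\epsilon$ the lateral hypothesis of Lemma~\ref{lemma:second comp princ} is unverified, and the limit $\epsilon\downarrow 0$ does not rescue it.

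\textbf{The repair.} Take $\epsilon=0$, i.e.\ boundary data $w$ itself on $\partial D$.
\begin{itemize}
\item For the reflected complement of $E$, the lateral hypothesis only involves points of $\partial D\times(a,b)$ lying strictly above $w$. Each such point has a neighborhood in $\{x_{m+1}>w\}\subset E^c$, because $E\subset\overline{U_{t_*}}$ in $\bB_\rho(x_0)$. Contact along the graph of $w$ over $\partial D$ is therefore harmless.
\item The lemma then yields $|E\cap\{x_{m+1}\geq u\}|=0$ in the cylinder. Normalizing $\partial E=\overline{\partial^*E}$, this gives $\partial E\cap(D\times(a,b))\subset\{x_{m+1}\leq u\}$.
\item Consider the linearized equation $-\dive[\mathrm{M}\nabla(w-u)]=g\geq\kappa/2$, with $\mathrm{M}$ as in the proof of Lemma~\ref{lemma:strong max principle for graphs}, and $w-u=0$ on $\partial D$. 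The weak maximum principle followed by Lemma~\ref{lemma:strong max principle for graphs}(i) gives $u<w$ in $D$.
\item Since $x_0\in\partial E$ sits at height $w(x_0')>u(x_0')$, this is the contradiction. No isolation and no limit are needed.
\end{itemize}
Equivalently, you could use $\tilde w=w+c|x'-x_0'|^2$ with small $c>0$. This keeps $-\dive\nabla\tilde F_\S(\nabla\tilde w)\geq\kappa/4$ and makes the contact isolated. Note that it raises the barrier away from $x_0$; bulging the barrier outward near $x_0$, as your wording suggests, would just remove the contact point.

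Two minor points to record. First, only finitely many $E_i$ meet a small ball about $x_0$, by lower density bounds, so $x_0\in\partial E_i$ for some $i$. Second, $w$ is only $C^2$, so Proposition~\ref{lemma: small data existence} must be applied on the rescaled unit disk, either by approximating the data or by noting that its proof only uses the $C^{1,\eta}$ bound.

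\textbf{Comparison with the paper.} With this repair your main route is a complete proof, and it differs from the paper's. The paper's argument is your last fallback:
\begin{itemize}
\item It blows $S$ up at the contact point, using Lemma~\ref{lemma:local upper bf bounds}. The blowup need not be a cone, but Lemma~\ref{lemma:strong max principle} does not need that.
\item The blowup is $q$ copies of the tangent hyperplane plus a part supported away from $0$.
\item $\varepsilon$-regularity writes the rescaled boundary piece $\partial E_1^j$ and the rescaled $\partial U_{\delta_1}$ as ordered $C^{1,\beta}$ graphs $f_j$, $g_j$ touching at $0$, with $-\dive\nabla F_\S(\nabla g_j)\geq r_j\kappa/2$ (or $G_\S$ if the orientation is reversed).
\item It concludes by Lemma~\ref{lemma:strong max principle for graphs}(i).
\end{itemize}
That route only invokes lemmas already in place, but it needs regularity of $\spt S$ at the contact point. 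Yours runs the De Philippis--Maggi squeezing argument (the one behind Lemma~\ref{lemma:strong max principle}) once, directly against the strictly mean convex barrier. It therefore never needs regularity of $S$ at $x_0$, at the cost of solving a small-data Dirichlet problem with only $C^2$ boundary values.
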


\begin{remark}[On the assumption $\spt S \subset \overline{B_{\delta_0}(U)}$]
    Without this assumption the result is false, as is seen by the example where $S$ is an equatorial disk in $\mathbb{R}^3$ with boundary $\mathbb{S}^1\times\{0\}$ and $U$ is a small tubular neighborhood containing $\spt \partial S$ but not $\spt S$.
\end{remark}

\begin{proof}
   Let $U_\delta$ denote the bounded open sets $\{ x: {\rm dist}\, (x,U)< \delta\}$. We choose $\delta_0$ small enough so that for all $0\leq \delta \leq \delta_0$,
  \begin{equation}\notag
  \min_{\partial U_\delta}H_{\partial U_\delta}^F\geq \kappa/2\,,\qquad \min_{\partial U_\delta}H_{\partial U_\delta}^G\geq \kappa/2.
    \end{equation}

Now let $S$ be an $\bF$-minimizer as in the statement. Assume for contradiction that $\spt S \setminus \overline{U} \neq \varnothing$. Then there is $\delta_1\geq 0$ which is the smallest $\delta\in [0,\delta_0]$ such that $\spt S \subset \overline{U_{\delta}}$. We must show $\delta_1$ cannot be positive. Since $\spt S$ is compact, there must be $x\in \partial U_{\delta_1} \cap \spt S$. We blow up $S$ at $x$ by some $r_j\to 0$ such that $S_{x,r_j} \to S_0$ for some $S_0$, recall that $S_0$ exists thanks to the mass ratio upper bound in Lemma \ref{lemma:local upper bf bounds}. Note that the blowup satisfies the assumptions of Lemma \ref{lemma:strong max principle}. In particular, $S_0$ decomposes as $q$ copies of an oriented hyperplane plus some current $S'$ such that ${\rm dist}\,(\spt S' \cap \bB_1,0)>0$. If the orientation of $S_0$ at zero coincides with $T_{x}(\partial U_{\delta_1})$ oriented by the outward normal $\nu_{U_{\delta_1}}$ to $\partial U_{\delta_1}$, we define $S'=S$, where if the orientation is opposite, we set $S'=-S$. Note in this latter case that $S'$ is $\mathbf{G}$-minimizer. 

We decompose $S'_{x,r_j}\res \bB_1$ into finitely many boundaries $\sum_i \partial \a{E_i^j}$ of some anisotropic perimeter minimizers in $\bB_1$. Let us assume without loss of generality that $0\in \partial E_1^j$ and $\partial E_1^j$ blows up to a plane for all $j$.  By applying the interior regularity of \cite{figalli2017regularity}, we find that for large $j$, $\partial E_1^j$ is locally a graph of some $f_j\in C^{1,\beta}, \beta >0,$ over $T_x(\partial U_{\delta_1})$, as is $(\partial U_{\delta_1}-x)/r_j$ with some $g_j$. Up to a rotation, let us assume that the tangent plane is $\mathbb{R}^m\times\{0\}$ and that $E_1^j$ and $(U_{\delta_1}-x)/r_j$ correspond to the epigraphs of $f_j$ and $g_j$, so that $g_j \geq f_j$ with equality at $0$. Then $f_j$ and $g_j$ are $C^{1,\beta}$ weak solutions to
\begin{equation}\notag
-{\rm div}\, \left[\nabla F_\S(\nabla f_j)  \right] =0 \,,\qquad -{\rm div}\, \left[\nabla F_\S(\nabla g_j)  \right] \geq r_j\kappa/2\,,
\end{equation}
in the case $S'=S$, with $F_\S$ replaced by $G_\S$ if $S'=-S$. By Lemma \ref{lemma:strong max principle for graphs}, $g_j>f_j$ on a ball containing $0$ for all large $j$, which contradicts $g_j(0)=f_j(0)$. Thus $\delta_1>0$ is impossible and proof is complete.\end{proof}

\subsection{Parametric Hopf lemma}

The Hopf lemma for graphs can be used to prove an analogous statement for an anisotropic area-minimizing hypercurrents sitting above a hyperplane by ``squeezing'' a minimizing graph in between them, an idea which is due to De Philippis and Maggi \cite[Lemma 2.13]{maggidephilippisyoung}.

{\begin{proposition}[Hopf-type lemma for $\bF$-minimizers]\label{lemma:hopf currents}
    If $F$ is a uniformly elliptic integrand of class $C^{2,\dini}$, {$\alpha \in (0,1)$ is as in Proposition \ref{lemma: small data existence}}, $\Omega\subset \mathbb{R}^m$ is an open, star-shaped, bounded set with {smooth} boundary, and $r_0\in (0,1/2)$ is fixed, then there is $\eta_0>0$ with the following property:

    \setlength{\leftskip}{15pt}
    \noindent if $L:\R^m \to \mathbb{R}$ given by $L(x^\prime) = a\cdot x'$ is a linear function with $|a|\leq \eta_0$, $T$ is an $\bF$-minimizer in $\Omega \times (-1,1)$ with $\partial T \res{(\Omega \times (-1,1))}=0$ and 
    \begin{equation}\label{eq:epigraph containment}
    \spt \, T\subset \{(x',x_{m+1}) \in \overline{\Omega}\times [-1,1] : x_{m+1} \geq L(x') \}\,,
    \end{equation}
    and there exists $x_0=(x_0',x_{m+1}^0)\in {\rm graph}_{\partial \Omega} L$ such that
    \begin{equation}\label{eq:xnought rnought away from T}
    {\rm dist}(x_0,\spt T) \geq r_0\,,    
    \end{equation}
    then there are $\tau_0>0$ and $t_0>0$, both depending on $\bf F$, $m$, and $r_0$, and $u\in {\rm Lip}(\Omega)$ such that
    \begin{align}\label{eq:u geq L}
    & u(x') \geq L(x')\qquad \forall x'\in \overline{\Omega}\,,\\ \label{eq:u=L away from x_0'}
    &u(x') = L(x') \qquad \forall x' \in \partial \Omega \setminus B_{r_0}(x_0')\,,\\ \label{eq:hopf for u}
        &u(x'+t\nu_\Omega) >u(x') + ta\cdot \nu_\Omega + |t|\tau_0 \qquad {\forall x' \in \partial \Omega \setminus B_{r_0}(x_0')},\, t\in [-t_0,0)\,,\qquad \mbox{and}\\ \label{eq:doesnt hit subgraph}
        &\spt \, T  \subset \{ (x',x_{m+1})\in \overline{\Omega}\times [-1,1]: x_{m+1} \geq u(x')\}\,.
    \end{align}
    \setlength{\leftskip}{0pt}
\end{proposition}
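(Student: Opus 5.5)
Let $\varphi\in C^\infty(\partial\Omega)$ be a fixed bump with $0\le\varphi\le 1$, $\varphi\equiv 1$ on $\partial\Omega\cap B_{r_0/4}(x_0')$ and $\spt\varphi\subset B_{r_0/2}(x_0')$. By compactness of $\partial\Omega$, finitely many such profiles (indexed by a finite net of candidate points $x_0'$) suffice, so all constants are uniform in $x_0$. I plan to take $u$ to be the solution, from Proposition \ref{lemma: small data existence}, of the Dirichlet problem ${\rm div}\,\nabla F_\S(\nabla u)=0$ in $\Omega$ with boundary data $g=L+\epsilon\varphi$. Here $\epsilon>0$ is a small constant depending on $F$, $m$, $r_0$, $\Omega$, and $\eta_0\le\epsilon$ is chosen later. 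Since $\|L\|_{C^{1,\eta}}\lesssim\eta_0$, the data are small, so $u$ exists, is in $C^{1,\alpha}\cap C^2$, and satisfies $\|u\|_{C^{1,\alpha}}\le C(\eta_0+\epsilon)$. Property \eqref{eq:u=L away from x_0'} holds by construction.

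For \eqref{eq:u geq L} and \eqref{eq:hopf for u}, I write $w=u-L$. Since $L$ is affine it solves the same equation, so, as in the proof of Lemma \ref{lemma:strong max principle for graphs}, $w$ solves a linear uniformly elliptic divergence-form equation ${\rm div}(M\nabla w)=0$ with Dini coefficients. Its boundary data are $\epsilon\varphi\ge0$ and not identically zero. The weak maximum principle gives $w\ge0$, the strong one gives $w>0$ in $\Omega$, and the Hopf lemma \cite[Theorem 2.1]{apushkinskaya2019boundary} gives $\partial_{\nu} w<0$ on $\partial\Omega\setminus B_{r_0}(x_0')$, where $w=0$. To make this \emph{quantitative}, I normalize $\tilde w=w/\epsilon$. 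Then $\tilde w$ solves an equation whose coefficient matrix is $\sim D^2F_\S(0)$ up to $O(\eta_0+\epsilon)$, with boundary data $\varphi$. A compactness argument (if $\epsilon_k,\eta_k\to0$ gave degenerating normal derivatives, pass to the limit in $C^{1,\alpha'}$ and contradict Hopf for the constant-coefficient limit) yields $-\partial_\nu\tilde w\ge 2\tau_1>0$ on $\partial\Omega\setminus B_{r_0}(x_0')$, uniformly. The uniform $C^{1,\alpha}$ bound then gives $t_0$ with $w(x'+t\nu_\Omega)\ge \tau_1\epsilon|t|$ for $t\in[-t_0,0)$, which is \eqref{eq:hopf for u} with $\tau_0=\tau_1\epsilon$. (Here $\nu_\Omega$ is the outer normal, so $t<0$ goes inward.)

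The main obstacle is \eqref{eq:doesnt hit subgraph}, i.e., showing $\spt T$ stays above the graph of $u$. I intend to use the squeezing/sliding argument of \cite[Lemma 2.13]{maggidephilippisyoung} with the comparison principle Lemma \ref{lemma:second comp princ}. As in the proof of Lemma \ref{lemma:strong max principle}, decompose $T=\sum_i\partial\a{E_i}$ into anisotropic perimeter minimizers in $\Omega\times(-1,1)$. By \eqref{eq:epigraph containment}, each $E_i$ or its complement (swapping $F$ for $G(\nu)=F(-\nu)$, for which the same construction with $G_\S$ works) contains the region below ${\rm graph}\,L$. Now slide: consider $u_s$, the solutions with data $L+s\epsilon\varphi$, $s\in[0,1]$. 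By continuous dependence, which follows from uniqueness and compactness in Proposition \ref{lemma: small data existence}, these vary continuously. Let $s^*$ be the supremum of $s$ for which the subgraph of $u_s$ is contained in $E$ (up to null sets).

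For $s\le s^*$, the hypotheses of Lemma \ref{lemma:second comp princ} must hold. The lateral boundary condition holds because $u_s=L$ off $B_{r_0/2}(x_0')$. On $B_{r_0/2}(x_0')\cap\partial\Omega$ we have $u_s\le L+\epsilon$, and by \eqref{eq:xnought rnought away from T} no part of $\spt T$ lies within $r_0$ of $x_0$, so if $\epsilon,\eta_0\ll r_0$ the graph stays in the region below $\spt T$. The bottom condition on $\Omega\times\{a\}$ with $a=-1$ follows from \eqref{eq:epigraph containment}. Lemma \ref{lemma:second comp princ} thus applies for every $s\in[0,1]$, giving that the subgraph of $u_1=u$ lies in $E$. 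Hence $\spt T\cap\{x_{m+1}<u\}=\varnothing$, which is \eqref{eq:doesnt hit subgraph}. If the direct application is obstructed by the need for the boundary condition exactly on $\partial\Omega\times(a,b)$, I would first shrink $\Omega$ slightly. The delicate part is verifying that $\spt T$ stays away from the lifted boundary portion near $x_0$ uniformly, and this is exactly where \eqref{eq:xnought rnought away from T} enters.
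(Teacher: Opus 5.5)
You follow the paper's overall plan: boundary data $L$ plus a small bump near $x_0'$, existence via Proposition \ref{lemma: small data existence}, the Hopf lemma for the Dini-coefficient equation satisfied by $u-L$, compactness for uniform constants, and the decomposition $T=\sum_i\partial\a{E_i}$ fed into Lemma \ref{lemma:second comp princ}. As in the paper, \eqref{eq:xnought rnought away from T} keeps the lifted data near $x_0'$ below $\spt T$. The sliding in $s$ is unnecessary, since the comparison lemma applies directly at $s=1$.

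There is a gap in the last step: you take $u$ to be the solution of the $F_\S$-problem only. Consider those $E_i$ whose \emph{complement} $(\Omega\times(-1,1))\setminus E_i$ contains the region below ${\rm graph}\,L$, i.e.\ the components of $T$ with downward orientation. For these, Lemma \ref{lemma:second comp princ} can only be applied to the complement, which minimizes the perimeter for $G(\nu)=F(-\nu)$. The lemma pairs a $P_G$-minimizer with a solution of the $G_\S$-equation, so what you obtain is $\spt\partial\a{E_i}\subset\{x_{m+1}\ge u_G\}$, where $u_G$ solves the $\mathscr{G}$-problem with the same data. For non-even $F$, in general $u_G\neq u_F$ and the two are not ordered. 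Moreover $u_F$ need not be a subsolution of the $G_\S$-equation, so its graph gives no comparison with $P_G$-minimizers. Hence nothing in your argument yields $\spt T\subset\{x_{m+1}\ge u_F\}$. Your parenthetical ``the same construction with $G_\S$ works'' produces a second function, but \eqref{eq:doesnt hit subgraph} asks for a single $u$ that works for every component of $T$ at once.

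The missing idea is to take $u=\min\{u_F,u_G\}$. This is exactly why the statement only asserts $u\in{\rm Lip}(\Omega)$. With this choice:
\begin{itemize}
\item \eqref{eq:u geq L} and \eqref{eq:u=L away from x_0'} hold because they hold for both functions.
\item \eqref{eq:hopf for u} holds because your normalized Hopf--compactness argument applies verbatim to $G_\S$, which is also uniformly elliptic of class $C^{2,\dini}$. Take the smaller of the two pairs of constants; since $u_F=u_G=L$ on $\partial\Omega\setminus B_{r_0}(x_0')$, the minimum inherits the estimate.
\item \eqref{eq:doesnt hit subgraph} follows because each $\spt\partial\a{E_i}$ lies above ${\rm graph}\,u_F$ or above ${\rm graph}\,u_G$, hence above ${\rm graph}\,u$.
\end{itemize}
With this modification your proof goes through.

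The paper also gives an extra argument ruling out contact between $\spt T$ and the graph. You do not need it for the non-strict inclusion \eqref{eq:doesnt hit subgraph}. Once $E_i$ is normalized to be open with $\partial E_i=\overline{\partial^*E_i}$, the a.e.\ conclusion of Lemma \ref{lemma:second comp princ} on the open subgraph already excludes $\spt\partial\a{E_i}$ from it.
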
}
\begin{proof}
We choose $\eta_0=\kappa_0/2$, where $\kappa_0$ is as in Proposition \ref{lemma:  small data existence}. Let us fix $r_0 \in (0,1/2)$.

The subsequent argument is divided into two main steps. In the first step, we prove the desired conclusions with additional $L$ and $x_0'$ dependence. More precisely, we fix a linear function $L$ with gradient modulus at most $\eta_0$ and $(x_0',x_{m+1}^0) \in {\rm \graph}_{\partial \Omega}L$, and prove the existence of $\tau_{L,x_0'}$, $t_{L,x_0'}$, and $u_{L,x_0'}$ such that \eqref{eq:u geq L}-\eqref{eq:doesnt hit subgraph} hold for any $T$ as in the assumptions of the lemma. Then in the second step, we prove that $\tau_{L,x_0'}$ and $t_{L,x_0'}$ can be made uniform among $x_0'$ and $L$.
    
    For later use, we let $\varphi \in C^\infty(B_{r_0/2};[0,\infty))$ be such that $\varphi=0$ on $\mathbb{R}^m \setminus B_{r_0/2}$, $0<\varphi<r_0/4$ on $B_{r_0/2}$, and, for every $x' \in \partial \Omega$ and $L(x^\prime) = a\cdot x^\prime$ with $|a|\leq \eta_0$,
    \begin{equation}\label{eq:varphi estimate}
        {\rm dist}\big((x_0^\prime,L(x_0^\prime)+\varphi(0)),\spt \, T\big)>r_0/4\quad \mbox{and}\quad\|L(\cdot)+ \varphi (\cdot - x') \|_{C^{1,\eta}(\Omega)}\leq 3\kappa_0/4\,;
    \end{equation}
such a choice of $\varphi$ is possible since $\eta_0=\kappa_0/2$.
    \medskip
    
    \noindent{\it Step one: \eqref{eq:u geq L}-\eqref{eq:doesnt hit subgraph} hold with $L$ and $x_0'$ dependence.} Fix $L$ and $x_0$ as in the statement of the lemma. Let $G$ be the anisotropy characterized by $\nu \mapsto F(-\nu)=:G(\nu)$. By Proposition \ref{lemma:  small data existence} and \eqref{eq:varphi estimate}, let $u_{L,x_0',F},u_{L,x_0',G}\in C^{1,\alpha}(\Omega)$ be the unique minimizers of $\sF$ and $\mathscr{G}$, respectively (see \eqref{eq:calF def} for the definition of $\sF$), on $\Omega$ with boundary data 
    $$
    \varphi(\cdot-x_0') + L(\cdot).
    $$ 
By Proposition \ref{lemma:  small data existence} and the second estimate in \eqref{eq:varphi estimate}, we have
    \begin{equation}\label{eq:uLxnoughtFG estimates}
        \max\{\|u_{L,x_0',F}\|_{C^{1,\alpha}(\Omega)},\|u_{L,x_0',G}\|_{C^{1,\alpha}(\Omega)}\}\leq C3\kappa_0/4 \,.
    \end{equation}
Setting $u_{L,x_0'}=\min\{u_{L,x_0',F},u_{L,x_0',G}\}$, we point out that \eqref{eq:u geq L} follows by the maximum principle. {Also, \eqref{eq:u=L away from x_0'} holds since $\spt \varphi \subset B_{r_0/2}$ implies that $\varphi(x'-x_0') + L(x') = L(x') $ if $x' \in \partial \Omega \setminus B_{r_0}(x_0')$.}
    
    Next, notice that by the definitions of $r_0$ and the choice $\varphi$ (its support is contained in $B_{r_0/2}$), we have
\begin{equation}\label{eq:epigraph containment in proof}
    \spt \, T\subset \{(x',x_{m+1}) \in \overline{\Omega}\times [-1,1] : x_{m+1} \geq L(x') +\varphi(x'-x_0') \}\,.
\end{equation}
We define
\begin{equation*}
   {\rm M}_F(x) = \int_0^1 D^2 F_\S((1-t)a + t \nabla u_{L,x_0',F}(x))\, dt\quad \mbox{and}\quad {\rm M}_G(x) = \int_0^1 D^2 G_\S((1-t)a + t \nabla u_{L,x_0',G}(x))\, dt\,.
\end{equation*}
Then $u_{L,x_0',F}-L$ satisfies the equation $-\dive [{\rm M}_F\nabla (u_{L,x_0',F}-L)]=0$, which is a uniformly elliptic equation with Dini coefficients (the uniform ellipticity is due to the $L^\infty$ gradient bounds on $u_{L,x_0',F}$ from Proposition \ref{lemma:  small data existence}). Also $u_{L,x_0',G}$ satisfies the analogous equation with $G$ instead of $F$. By the Hopf lemma \cite[Theorem 2.1]{apushkinskaya2019boundary}, the compactness of $\overline \Omega$, and the H\"{o}lder continuity of $\nabla u_{L,x_0',F}$ and $\nabla u_{L,x_0',G}$, there are $\tau_{L,x_0'}$ and $t_{L,x_0'}$ such that 
    \begin{align}\label{eq:first good hopf estimate}
        &u_{L,x_0',F} (x' + t\nu_\Omega)-u_{L,x_0',F}(x') -t L(\nu_\Omega) > |t|\tau_{L,x_0'} \quad \forall x'\in {\partial \Omega \setminus B_{r_0}(x_0')},\, t\in [-t_{L,x_0'},0)\quad \mbox{and}\\ \label{eq:second good hopf estimate}
        &u_{L,x_0',G} (x' + t\nu_\Omega)-u_{L,x_0',G}(x') - tL(\nu_\Omega) > |t|\tau_{L,x_0'} \quad \forall x'\in {\partial \Omega \setminus B_{r_0}(x_0')},\, t\in [-t_{L,x_0'},0)\,.
    \end{align}
Recalling that  $u_{L,x_0'}=\min\{u_{L,x_0',F},u_{L,x_0',G}\}$, we thus obtain \eqref{eq:hopf for u} with $L$ and $x_0'$ dependence. To finish this step, it remains to prove \eqref{eq:doesnt hit subgraph} with $L$ and $x_0'$ dependence.

    Since $\Omega$ (and thus $\Omega \times (-1,1)$) is star-shaped, we can write $T$ as a sum of boundaries of anisotropic relative perimeter minimizers (that is, of $P_F$)
    $$T \res{(\Omega \times (-1,1))}=\sum \a{\partial E_i \cap (\Omega \times (-1,1))}\,.$$
By \eqref{eq:epigraph containment in proof}, for each $E_i$, we have either 
\begin{equation}\label{eq:boundary ordering wmp rdp 2}
    [(\Omega \times (-1,1))\setminus E_i]^{\half} \cap \partial [\Omega \times (-1,1)]\supset \{(x',x_{m+1})\in \partial \Omega\times (-1,1): x_{m+1}<\varphi(x'-x_0')+L(x')\}\cup {(\Omega \times \{-1\})}
\end{equation}
or
\begin{equation}\label{eq:boundary ordering wmp rdp 3}
E_i^{\half} \cap \partial [\Omega \times (-1,1)]\supset \{(x',x_{m+1})\in \partial \Omega\times (-1,1): x_{m+1}< \varphi(x'-x_0')+L(x')\}\cup (\Omega \times \{-1\})\,. 
\end{equation}
    For those $E_i$ such that \eqref{eq:boundary ordering wmp rdp 2} holds, we apply Lemma \ref{lemma:second comp princ} with $u_{L,x_0',G}$, $(\Omega \times (-1,1))\setminus E_i$, and anisotropy $G$ to conclude that
\begin{equation}\label{eq:unique min sits below E 3}
    \big|\{(x',x_{m+1}):x'\in \Omega,\, x_{m+1}\leq u_{L,x_0',G}(x')\} \cap E_i\big|=0\,,
\end{equation}
    whereas for those $E_i$ such that \eqref{eq:boundary ordering wmp rdp 3} holds, we apply Lemma \ref{lemma:second comp princ} with $u_{L,x_0',F}$, $E_i$, and  anisotropy $F$ to conclude that
\begin{equation}\label{eq:unique min sits below E 2}
    \big|\{(x',x_{m+1}):x'\in \Omega,\, x_{m+1}\leq u_{L,x_0',F}(x')\} \setminus E_i\big|=0\,.
\end{equation}

As a consequence of \eqref{eq:unique min sits below E 3} and \eqref{eq:unique min sits below E 2}, if $\spt T$ touches the graph of $u$ inside $\Omega \times \mathbb{R}$, it can only do so when one of these $\partial E_i$'s touches its corresponding graph but does not cross it. Thus to finish proving \eqref{eq:doesnt hit subgraph}, we only need to preclude this possibility. This is impossible because if there were such a touching by some $E_i$, then the blowup of $\partial E_i$ at that point would sit on one side of a plane, and therefore coincide with that plane by Lemma \ref{lemma:strong max principle}. By the interior regularity of \cite{figalli2017regularity} and Lemma \ref{lemma:strong max principle for graphs}, $\spt \partial E_i$ coincides with either the graph of $u_{L,x_0',F}$ or $u_{L,x_0',G}$ in a neighborhood of that touching point, depending on the orientation of $\partial E_i$. This shows that the set of such coincidence points is open relative to the graph, since it is also closed and $\Omega \times \mathbb{R}$ is connected, we conclude that $\spt T$ contains either the graph of $u_{L,x_0',F}$ or $u_{L,x_0',G}$. We obtain a contradiction by recalling \eqref{eq:varphi estimate} and that these functions have $\varphi(x'-x_0') + L(x')$ as boundary data.

\medskip

\noindent{\it Step two: Removing the $L$ and $x_0'$ dependence.} This is a compactness argument. Assume for contradiction that for some sequence $\{(L_j,x_j')\}_j$ satisfying $\|\nabla L_j\|_{L^\infty}\leq \eta_0$ and $x_j\in  {\rm graph}_{\partial\Omega}L_j$, at least one of the sequences of constants $\tau_{L_j,x_j'}$ and $t_{L_j,x_j'}$ converge to $0$ no matter the choices of them for each $j$. Recalling that $u_{L_j,x_j'}=\min\{u_{L_j,x_j',F},u_{L_j,x_j',G}\}$, up to a subsequence, we can use \eqref{eq:uLxnoughtFG estimates} and the Arzela-Ascoli theorem to obtain $x_0'\in \partial \Omega$, $L_0$ with $\|\nabla L_0\|_{L^\infty}\leq \eta_0$, $u_{x_0',L_0,F}$, and $u_{x_0',L_0,G}$ such that 
\begin{equation}\notag
  x_j' \to x_0'\,, \quad \nabla L_j \to \nabla L_0\,, \quad   u_{x_j',L_j,F}\to  u_{x_0',L_0,F} \mbox{ and } u_{x_j',L_j,G} \to u_{x_0',L_0,G} \mbox{ in $C^{1,\beta}$ for all $0<\beta<\alpha$.}
\end{equation}
Furthermore, $u_{x_0',L_0,F}$ and $u_{x_0',L_0,G}$ minimize $\sF$ and $\mathscr{G}$, respectively, for the boundary data $L_0 + \varphi(\cdot - x_0')$. Therefore, the same argument as leading to \eqref{eq:first good hopf estimate}-\eqref{eq:second good hopf estimate} yields
$\tau_{L_0,x_0'}$ and $t_{L_0,x_0'}$ such that, for any $t\in [-t_{L_0,x_0'},0)$,
    \begin{align}\label{eq:first good hopf estimate 2}
        &u_{L_0,x_0',F} (x' + t\nu_\Omega)-u_{L_0,x_0',F}(x') -t L_0(\nu_\Omega) > |t|\tau_{L_0,x_0'} ,\ \forall x'\in {\partial \Omega \setminus B_{r_0}(x_0')},\, \ \mbox{and}\\ \label{eq:second good hopf estimate 2}
        &u_{L_0,x_0',G} (x' + t\nu_\Omega)-u_{L_0,x_0',G}(x') - tL_0(\nu_\Omega) > |t|\tau_{L_0,x_0'}, \ \forall x'\in {\partial \Omega \setminus B_{r_0}(x_0')}\, .
    \end{align}
Since we have $\beta$-H\"{o}lder convergence of gradients and $\nabla L_j \to \nabla L_0$ uniformly, \eqref{eq:first good hopf estimate 2}-\eqref{eq:second good hopf estimate 2} imply that, for $j$ large enough and $t\in [-t_{L_0,x_0'}/2,0)$,
\begin{align*}
        &u_{L_j,x_j',F} (x' + t\nu_\Omega)-u_{L_j,x_j',F}(x') -t L_j(\nu_\Omega) > |t|\tau_{L_0,x_0'}/2, \ \forall x'\in \partial \Omega \cap \{u_{L_j,x_j',F}=L_j\},\, \ \mbox{and}\\ \label{eq:second good hopf estimate 2}
        &u_{L_j,x_j',G} (x' + t\nu_\Omega)-u_{L_j,x_j',G}(x') - tL_j(\nu_\Omega) > |t|\tau_{L_0,x_0'}/2, \ \forall x'\in \partial \Omega \cap \{u_{L_j,x_j',G}=L_j\}\,.
    \end{align*}
But this is a contradiction of our assumption that it was impossible to choose $\tau_{L_j,x_j'}$ and $t_{L_j,x_j'}$ so that neither limit is zero. 
\end{proof}

\section{Boundedness of the polar angle function}

The goal of this section is to bound the polar angle function on $T$.

\subsection{Setup and notation} Throughout this section, we will always be working under the following conditions.

\begin{assumption}\label{assumption:tmx}
    We assume that $F$ is uniformly elliptic of class $C^{2,\dini}$ and $T$ is a nontrivial integer multiplicity rectifiable $m$-current that minimizes $\mathbf{F}$ in $\mathbb{R}^{m+1}$ with boundary $\a{\mathbb{R}^{m-1}\times\{(0,0)\}}$.
\end{assumption}

\begin{definition}[Rays, half hyperplanes, and halfspaces]
    Given $s\in \mathbb{S}^1$, we let $R_s\subset \mathbb{R}^2$ be the ray through the origin containing $s\in \mathbb{S}^1$ and $M_s = \mathbb{R}^{m-1}\times R_s$ be the corresponding half hyperplane. We consider its oriented counterpart
    \begin{equation}\notag
      \a{M_s} :=  \a{\mathbb{R}^{m-1}\times R_s}\,,
    \end{equation}
where the orientation is chosen so that $\partial \a{M_s} = \a{\mathbb{R}^{m-1}\times\{(0,0)\}}$. We will also denote by $H_s$ the open halfspace
\begin{equation}\notag
   \mathbb{R}^{m-1}\times \{r\sigma : r>0,\, \sigma \in \mathbb{S}^1,\, {\rm dist}_{\mathbb{S}^1}(\sigma,s)<\pi/2 \}\,, 
\end{equation}
so that $M_s$ is perpendicular to $\partial H_s$ and $M_s \subset H_s$. To streamline notation, we will often view $\mathbb{S}^1$ as a subset of $\mathbb{C}$ rather than $\mathbb{R}^2$, so that we can describe its elements by $e^{it}$ for $t\in \mathbb{R}$.
\end{definition}

\subsection{The polar angle function and its super-/sublevel sets}

In this subsection we define the polar angle function $\boldsymbol{\theta}$ following \cite[Section 11]{HS}.

First, we need some notation. Following \cite[Section 11]{HS}, we define $\mathbf{q}:\mathbb{R}^{m+1}\to \mathbb{C}$ by
\begin{equation}\notag
   \mathbf{q}(x_1,\dots,x_{m+1})=x_m + ix_{m+1}\,.
\end{equation}
For any piecewise $C^1$ curve $\gamma:[0,1]\to \mathbb{R}^{m+1} \setminus \mathbf{q}^{-1}(0)$, we set
\begin{equation}\notag
    \theta(\gamma) = {\rm Im}\left( \int_{\mathbf{q}\circ \gamma}\frac{dz}{z}\right)\,.
\end{equation}

The statement and proof of the following lemma are the same as in \cite[Section 11]{HS}, except there the curves in question are confined to the link of the blowup cone; we include it for convenience. 

\begin{lemma}\label{lemma:theta well defined}
If $T$ satisfies Assumption \ref{assumption:tmx} and $\gamma:[0,1]\to {\rm Reg}_i\, T$ is piecewise $C^1$ with $\gamma(0)=\gamma(1)$, then $\theta(\gamma)=0$.
\end{lemma}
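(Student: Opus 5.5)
The plan is to read $\theta(\gamma)/2\pi$ as the winding number of $\mathbf{q}\circ\gamma$ around $0$. This number equals, up to sign, the linking number of the closed curve $\gamma$ with the $(m-1)$-plane $L=\mathbf{q}^{-1}(0)$. I will compute that linking number as an intersection number of $\gamma$ with a chain bounded by $L$, and choose $T$ itself as that chain.

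First, $\gamma$ avoids $L$: $\gamma$ lies in ${\rm Reg}_i\,T$, which is disjoint from $\spt\partial T = L$. So $\mathbf{q}\circ\gamma$ is a closed piecewise $C^1$ curve in $\mathbb{C}\setminus\{0\}$, and $\theta(\gamma)\in 2\pi\mathbb{Z}$ is its winding number times $2\pi$. This quantity is invariant under homotopies of $\gamma$ in $\mathbb{R}^{m+1}\setminus L$.

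Next, I compute the winding number via the half hyperplane $\a{M_1}$ with $\partial\a{M_1}=\a{L}$. After a small perturbation of $\gamma$ within $\mathbb{R}^{m+1}\setminus L$ making it transverse to $M_1$, the winding number equals the algebraic intersection number $\gamma\cdot M_1$. This is simply the count of signed crossings of the positive real axis by $\mathbf{q}\circ\gamma$.

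Now consider $S:=T-\a{M_1}$. This is an $m$-current with $\partial S=0$ in $\mathbb{R}^{m+1}$, so $S=\partial \a{E}$ for some integer $(m+1)$-current (a sum of sets of locally finite perimeter, with integer multiplicities). For a closed curve $\gamma$ disjoint from $\spt S$, the intersection number $\gamma\cdot S$ is the signed multiplicity change of $E$ along $\gamma$, which is zero. In general I take the intersection number in the slicing sense: it is well defined whenever $\gamma$ misses $\spt\partial S$, and it vanishes because $S$ is a boundary in the simply connected $\mathbb{R}^{m+1}$. Hence $\gamma\cdot T=\gamma\cdot M_1$.

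The remaining step, which I expect to be the main obstacle, is showing $\gamma\cdot T=0$ for a curve lying inside ${\rm Reg}_i\,T$. Near each point of $\gamma$, $\spt T$ is a $C^{1,\beta}$ embedded hypersurface $\Sigma$ carrying constant multiplicity. I push $\gamma$ off $\Sigma$ along the unit normal $\nu$ to obtain $\gamma_\epsilon=\gamma+\epsilon\nu\circ\gamma$. This is defined if the normal bundle of $\Sigma$ along $\gamma$ is trivial; $\Sigma$ is two-sided because it is an orientable hypersurface (it carries the orientation $\vec T$), so this holds. For small $\epsilon$, $\gamma_\epsilon$ is homotopic to $\gamma$ in $\mathbb{R}^{m+1}\setminus L$. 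The delicate point is that $\gamma_\epsilon$ must not meet other sheets of $\spt T$ near $\gamma$. Compactness of $\gamma$ and the local graph description give a tubular neighbourhood in which $\spt T$ coincides with $\Sigma$, so $\gamma_\epsilon$ misses $\spt T$ entirely. Therefore $\gamma_\epsilon\cdot T=0$, which gives $\theta(\gamma)=\theta(\gamma_\epsilon)=2\pi\,\gamma_\epsilon\cdot M_1=2\pi\,\gamma_\epsilon\cdot T=0$.

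The care needed is in justifying the intersection-number identity for merely rectifiable $E$. I would handle this by slicing: the map $t\mapsto \Theta_E(\gamma_\epsilon(t))$ is constant for a.e. small translate of $\gamma_\epsilon$ missing $\spt S$, and along such a translate $\Theta_E$ changes by exactly the crossings of $M_1$ counted with sign, since $\partial\a{E}=T-\a{M_1}$.
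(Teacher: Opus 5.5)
Your proposal is correct and is essentially the paper's argument. You push $\gamma$ off ${\rm Reg}_i\,T$ along its normal (continuous because $\vec T$ orients the regular part) to a loop that is homotopic to $\gamma$ in $\mathbb{R}^{m+1}\setminus\mathbf{q}^{-1}(0)$ and misses $\spt T$. You identify $\theta/2\pi$ with the signed number of crossings of a half hyperplane bounded by $\mathbf{q}^{-1}(0)$, and use that $T$ minus this half hyperplane is a boundary, so such a closed loop has zero net crossing. The only difference is cosmetic: the paper does the last step with Federer's intersection currents and the boundary formula for $J\cap\beta_\#[0,1]$ in a large cube, while you read it off from the jumps of the integer multiplicity of $E$ along the loop.
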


\begin{proof}
    First, we note that for $\rho>0$ small enough, the modified curve $\beta:=\gamma + \rho \nu_{{\rm Reg}_i\,T}\circ \gamma$
    satisfies 
    \begin{equation}\label{eq:beta doesnt hit support T}
        \theta(\gamma)=\theta(\beta)\,,\qquad \beta([0,1]) \cap \spt \, T = \varnothing\,,
    \end{equation}
where $\theta(\gamma)=\theta(\beta)$ follows from the homotopy invariance of winding number. As observed in \cite[Section 11]{HS}, $\theta(\beta)$ corresponds to algebraically counting the number of crossings (modulo a multiplicative factor of $2\pi$) of $\gamma$ with any oriented half-hyperplane $P=(-1)^{m-1}\a{\mathbb{R}^{m-1}\times L}$, where $L\subset \mathbb{R}^2$ is any ray chosen so that $\beta$ intersects $\spt(P)$ transversally; thus
   \begin{equation}\label{eq:gamma=beta}
       \theta(\gamma)=\theta(\beta)=2\pi(P \cap \beta_\#[0,1])(1)\,,
   \end{equation}
   where the current in the right-most term is the \emph{intersection between two currents} as defined in \cite[4.3.20, page 460]{Fed}.
   Note that since $\beta([0,1])$ is compact, $1$ is an admissible test function for the current $P \cap \beta_\#[0,1]$. 
   
   Next, fix a cube $[-R,R]^{m+1}$ large enough so that $\beta([0,1]) \subset\!\subset [-R,R]^{m+1}$. Since $\partial P - \partial T=0$, we can find an integer rectifiable current $J$ on $\mathbb{R}^{m+1}$ such that
   \begin{equation}\label{eq:H-T is a boundary}
       (P-T)\res{(-R,R)^{m+1}} = (\partial J)\res{(-R,R)^{m+1}}\,.
   \end{equation}
Using in order \eqref{eq:gamma=beta}, \eqref{eq:H-T is a boundary}, and the second assertion in \eqref{eq:beta doesnt hit support T}, we compute
   \begin{equation*}
       (2\pi)^{-1}\theta(\gamma) = \left([T + \partial J]\cap \beta_\#[0,1] \right)(1)= 0 + \left( \partial J\cap \beta_\#[0,1] \right)(1)\,.
   \end{equation*}
To simplify the right hand side we will use the following formula from \cite[bottom of page 460]{Fed} for the boundary of the intersection current applied to $J\cap\beta_\#[0,1]$:
   \begin{equation*}
       \partial (J\cap\beta_\#[0,1]) - J \cap (\partial \beta_\#[0,1]) = (-1)^m (\partial J)\cap \beta_\#[0,1].
   \end{equation*}
Recall that $\partial S(1) = S(d 1 ) =0$ for any compactly supported $1$-current $S$. We apply this formula with $S = J\cap\beta_\#[0,1]$, use that $\partial \beta_\#[0,1]=0$ (since $\beta$ is closed), and put together the last two displayed equations to obtain
    \[ (2\pi)^{-1}\theta(\gamma) =(-1)^m \partial (J\cap\beta_\#[0,1])(1) - (-1)^mJ \cap (\partial \beta_\#[0,1])(1)=0+0=0.\qedhere\]
\end{proof}

The preceding lemma ensures that the following definition is well-posed and depends only on the initial choice of base points $a_j$; cf. \cite[page 478]{HS}.

\begin{definition}[Polar angle function {on $\bB_{10}\cap {\rm Reg}_i T$ and its super-/sublevel sets}]\label{def:polar angle}
    Suppose $T$ satisfies Assumption \ref{assumption:tmx} and $\{C_j\}$ are the path connected components of ${\bB_{10} \cap {\rm Reg}_i\,T}$. For each $C_j$, we choose base point $a_j\in C_j$ and $\theta_j \in [0,2\pi)$ such that $\theta_j$ is the polar angle of $\mathbf{q}(a_j)$. Then we define the continuous function $\boldsymbol{\theta}:{\bB_{10}\cap }{\rm Reg}_i\, T\to \mathbb{R}$ by
\begin{equation*}
    \boldsymbol{\theta}(x) = \theta_j + \theta(\gamma)
\end{equation*}
if $x\in C_j$ and $\gamma:[0,1]\to C_j$ is any $C^1$ curve with $\gamma(0)=a_j$ and $\gamma(1)=x$. For $N\in \mathbb{N}$, we set
\begin{equation*}
\begin{aligned}
    T_{N}:= \a{\{x\in \bB_{10} \cap {\rm Reg}_i T : \boldsymbol{\theta}(x) >  2N \pi\}}\,, \\
    T_{-N}:= \a{\{x\in \bB_{10} \cap {\rm Reg}_i T : \boldsymbol{\theta}(x) <  -2N \pi\}}\,.    
\end{aligned}
\end{equation*}
\end{definition}

\begin{lemma}[Super-/sublevel sets]\label{lemma: basic prop T N}
Let $F$ and $T$ be as in Assumption \ref{assumption:tmx}. Then for all $N \in \mathbb{N}$:
\begin{enumerate}[\upshape (i)]
     \item $T_{N}$ and $T_{-N}$ are integer rectifiable $m$-currents, and

    \item $\partial T_{\pm N}$ is supported in $\overline{M_1}$.
\end{enumerate}
\end{lemma}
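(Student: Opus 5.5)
We prove both items by working locally and exploiting the fact that $\boldsymbol{\theta}$ is locally constant modulo $2\pi$ away from the half hyperplane $\overline{M_1}$. Statement (ii) is understood inside the open ball $\bB_{10}$ where $T_{\pm N}$ is defined; any boundary created on $\partial\bB_{10}$ by the truncation is not part of what we treat here.

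For (i), set $A_N:=\{x\in \bB_{10}\cap {\rm Reg}_i T:\boldsymbol{\theta}(x)>2N\pi\}$. Since $\boldsymbol{\theta}$ is continuous on the relatively open set $\bB_{10}\cap{\rm Reg}_iT$, the set $A_N$ is relatively open in ${\rm Reg}_iT$, hence Borel. We interpret $T_N$ as $T\res A_N$, i.e.\ $A_N$ with the orientation $\vec T$ and the multiplicity of $T$. This is an integer rectifiable current because $T$ is one and restriction to a Borel set preserves rectifiability, integrality and the orientation. Its mass is finite by $\bM(T\res A_N)\le \bM(T\res\bB_{10})<\infty$, which follows from Lemma~\ref{lemma:local upper bf bounds} and a covering of $\overline{\bB_{10}}$. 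The argument for $T_{-N}$ is identical.

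For (ii), fix $x\in \bB_{10}\setminus \overline{M_1}$; we show $x\notin\spt\partial T_N$. There are two cases.
\begin{itemize}
\item If $x\notin\spt T$, there is nothing to prove.
\item If $x\in {\rm Reg}_iT$, choose a small ball $\bB_\rho(x)$ disjoint from $\overline{M_1}$ such that $\spt T\cap\bB_\rho(x)$ is a connected $C^{1,\beta}$ hypersurface.
\end{itemize}
In the second case, the polar angle of $\mathbf q(y)$ is never $\equiv 0 \pmod{2\pi}$ for $y\in\bB_\rho(x)$. By construction $\boldsymbol{\theta}(y)$ is congruent modulo $2\pi$ to the polar angle of $\mathbf q(y)$, since the winding integral only adds the change of argument. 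Hence $\boldsymbol{\theta}\neq 2N\pi$ on the connected set $\spt T\cap \bB_\rho(x)$. By continuity, $\boldsymbol{\theta}-2N\pi$ has a constant sign there, so $T_N\res\bB_\rho(x)$ is either $0$ or $T\res\bB_\rho(x)$. Since $\partial T\res\bB_\rho(x)=0$ (as $\Gamma=\mathbb R^{m-1}\times\{0\}^2\subset\overline{M_1}$), in either case $\partial T_N\res\bB_\rho(x)=0$.

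The main obstacle is the remaining set: points of $\spt T\setminus({\rm Reg}_iT\cup\Gamma)$ lying off $\overline{M_1}$, where no graphical description is available. From the previous case, $\spt\partial T_N\cap(\bB_{10}\setminus\overline{M_1})$ is contained in the interior singular set ${\rm Sing}_iT$. By \cite{figalli2017regularity} this set satisfies $\mathcal H^{m-2}({\rm Sing}_iT)=0$, so in particular $\mathcal H^{m-1}({\rm Sing}_iT)=0$. On the open set $W:=\bB_{10}\setminus\overline{M_1}$, the current $\partial T_N\res W$ is an $(m-1)$-dimensional flat chain, being the boundary of an integer rectifiable current of finite mass. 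Its support lies in a set of vanishing $\mathcal H^{m-1}$ measure, so by Federer's support theorem for flat chains \cite[4.1.20, 4.2.14]{Fed} it vanishes.

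Should one prefer to avoid flat-chain machinery, an alternative is a slicing argument. Since $\mathcal H^{m-2}({\rm Sing}_iT)=0$, almost every $1$-dimensional slice of $T$ by a generic projection misses the singular set. This shows $\partial T_N(\phi)=0$ for forms $\phi$ supported in $W$, by approximating $\phi$ and applying the regular case on each slice. In either formulation, $\spt\partial T_{N}\cap\bB_{10}\subset\overline{M_1}$, and the same argument applies to $T_{-N}$.
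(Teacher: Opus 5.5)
Your proof is correct and follows essentially the same route as the paper: off $\overline{M_1}$ the function $\boldsymbol{\theta}$ avoids $2\pi\mathbb{Z}$, so $T_{\pm N}$ locally coincides with $T$ restricted to a union of connected components of the regular set, and these components carry no boundary. The paper only asserts this last point, while you justify it explicitly through $\mathcal{H}^{m-1}({\rm Sing}_i T)=0$ and the support theorem for flat chains, which is the same reasoning the paper uses in the proof of Lemma \ref{lemma:local upper bf bounds}; you are also right that (ii) must be read inside $\bB_{10}$, which is how it is used later.
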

\begin{proof}
Item (i) holds since the \emph{sets} defining $T_{\pm N}$ are $\cH^m$-measurable sets contained in a $C^1$ submanifold of dimension $m$, namely, ${\rm Reg}_i\, T$, and are therefore locally $m$-rectifiable. For item (ii), fix $x\in \spt\, T_{\pm N} \setminus \overline{M_1}$. Then we can choose $r>0$ small enough so that ${\rm dist}(\bB_r(x),M_1)=: d >0$. Since $\{\boldsymbol{\theta}=\pm 2N \pi\}\subset \overline{M_1}$, there is $\delta>0$ depending on $d$ such that $\boldsymbol{\theta}\geq 2N\pi + \delta$ on $\bB_r(x) \cap {\rm Reg}_i T$ if $x\in \spt T_N \setminus \overline{M_1}$ and $\boldsymbol{\theta}(x)\geq 2N\pi $ or $\boldsymbol{\theta}\leq -2N\pi - \delta$ on $\bB_r(x) \cap {\rm Reg}_i T$ if $x\in \spt T_{-N} \setminus \overline{M_1}$ and $\boldsymbol{\theta}(x) \leq -2N\pi $. Thus $T_{\pm N}\res{\bB_r(x)}$ is concentrated on a union of connected components of ${\rm Reg}_i T \cap \bB_r(x)$. Since $T$ is boundaryless on that ball and thus each connected component of ${\rm Reg}_i T$ is, $T_{\pm N}$ is boundaryless there as well and the proof is complete.
\end{proof}

\subsection{Barrier construction}\label{subsec:barrier construction} In this subsection, we build an anisotropic mean convex barrier $U$ to trap the support of $T_{\pm N}\res \bB_5$ inside $\overline{U}$ for large $N$, from which we will conclude that $\boldsymbol{\theta}$ is bounded.

\begin{theorem}[$\boldsymbol{\theta}\in L^\infty(\bB_5)$]\label{lemma:trapping lemma}
If $T$ satisfies Assumption \ref{assumption:tmx}, then there is an anisotropic mean convex smooth open set $U\subset \R^{m+1}$ and $N_0 = N_0(U, {T})>0$ such that for any $N\geq N_0$, 
\begin{equation}\label{eq:barrier containment equation in proposition}
   {\spt(T_{\pm N}\res \bB_{5}) \subset \overline{U \cap \bB_{5}}\subset \overline{H_1}}.
\end{equation}
As a consequence,
\begin{equation*}
    \boldsymbol{\theta}\in L^\infty(\bB_5)\,.
\end{equation*}
\end{theorem}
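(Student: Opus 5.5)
We follow the outline of Section \ref{subsec: intro explanation BU argument} and treat $T_N$; the case of $T_{-N}$ is symmetric, via reflection $x_{m+1}\mapsto -x_{m+1}$ together with the reflected anisotropy $G$.

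\textbf{Construction of $U$.} Fix a small $\delta>0$. Let $U$ be a smooth bounded open set that agrees on $\bB_5$ with the solid cylinder
\[
\{x : |(x_m,x_{m+1})-(\delta,0)|<\delta\},
\]
which is tangent to $L=\R^{m-1}\times\{0\}^2$ along $L$. Outside $\bB_5$ we dilate the cross-sectional radius slightly, following the profile in Figure \ref{fig:barrier set}, and close the set off with spherical caps of radius comparable to $\delta$ near $|x'|\approx 10$. Then $\overline{U\cap\bB_5}\subset\{x_m\ge0\}$, and this set meets $\{x_m=0\}$ only along $L$.

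A cylinder of radius $\delta$ over an $(m-1)$-dimensional axis has one principal curvature $1/\delta$ and the others zero. Uniform ellipticity gives $H^F,H^G\ge a/(\Lambda\delta)$ there, and this lower bound is preserved along the slow dilation and on the caps, provided the profile's second derivative is $\ll 1/\delta$. This yields the constant $\kappa>0$ and, by Proposition \ref{corollary:mean convex barriers}, a $\delta_0(U)$. The set $U$ must also contain $\spt\partial(T_N\res\bB_{10})$, up to the boundary slice on $\partial\bB_{10}$. This requirement, together with the point that $\partial\bB_5$ lies well inside the dilated region, is what fixes the geometry.

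\textbf{Step 1: $\spt T_N$ concentrates near $L$.} We claim that for every $\epsilon>0$ there is $N_0$ with $\spt T_N\cap \bB_{10}\subset B_\epsilon(L)$ for $N\ge N_0$. Suppose instead that $x_N\in\spt T_N$ satisfies $|\mathbf q(x_N)|\ge\epsilon$. Each sheet of $\operatorname{Reg}_iT$ carrying a distinct value $\boldsymbol\theta\in(2\pi k,2\pi(k+1)]$ near $\mathbf q$-radius $\epsilon$ contributes, by interior lower density bounds in balls of radius $\sim\epsilon/2$ away from $L$, mass at least $c\epsilon^m$. The reason is that a path from the base point to $x_N$ inside its component of $\operatorname{Reg}_iT$ has to wind $N$ times around $L$. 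Because the function $|\mathbf q|$ is continuous along this path, it should pass through $N$ disjoint winding layers at radius of order $\epsilon$. This contradicts the local upper mass bound of Lemma \ref{lemma:local upper bf bounds}, applied on $\bB_{10}$ after covering by finitely many balls. Making this step rigorous is the main obstacle: the sheets might wind only close to $L$ while staying at radius $\ge\epsilon$ only on short arcs. I would handle this by slicing with the angular half-planes $M_{e^{it}}$ and using the coarea formula in $\boldsymbol\theta$. Each full turn at $\mathbf q$-distance $\ge\epsilon/2$ contributes a definite amount of $\mathcal H^m$ measure, because a connected piece of $\operatorname{Reg}_iT$ passing from radius $\epsilon$ to radius $\epsilon/2$ has mass $\gtrsim\epsilon^m$ by the monotonicity-free lower density bound.

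\textbf{Step 2: Maximum principle.} By Lemma \ref{lemma: basic prop T N}, $\partial T_N$ is supported in $\overline{M_1}\cup L$ together with the slice on $\partial\bB_{10}$. We arrange that the relevant part of $\spt\partial T_N$ lies in $\overline U$: on $\overline{M_1}$ it lies in $\{\boldsymbol\theta=2\pi N\}\subset B_\epsilon(L)\cap \overline{M_1}$, and $\overline{M_1}\cap B_\epsilon(L)\subset\overline U$ once $\epsilon$ is small relative to the cylinder near $L$. Near $\partial\bB_{10}$ the dilation of $U$ absorbs the $\epsilon$-neighbourhood. To make the tangency region literally work, it is simplest to apply the maximum principle to a localized version, namely $T_N$ restricted so that its boundary sits inside $\overline U$. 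Since $T_N$ is a union of pieces of components of $\operatorname{Reg}_iT$, it is $\bF$-minimizing by the subadditivity argument in Lemma \ref{lemma:local upper bf bounds}. Choosing $\epsilon<\delta_0$ gives $\spt T_N\subset\overline{B_{\delta_0}(U)}$, and Proposition \ref{corollary:mean convex barriers} yields $\spt T_N\subset\overline U$. Intersecting with $\bB_5$ gives \eqref{eq:barrier containment equation in proposition}, and $\overline{U\cap\bB_5}\subset\overline{H_1}$ by construction.

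\textbf{Step 3: Boundedness of $\boldsymbol\theta$.} If $\boldsymbol\theta(x)>2\pi N_0+\pi/2$ at some $x\in\bB_5$, then $x\in\spt T_{N_0}\subset\overline{H_1}$. However, $\boldsymbol\theta$ is continuous along paths and can reach such a value only by crossing the half-plane at angle $2\pi N_0+\pi/2$, which lies in $\partial H_1$ away from $L$. Following the path and noting that it stays inside $T_{N_0}$ after its last crossing of angle $2\pi N_0$ gives a contradiction. Hence $\boldsymbol\theta\le 2\pi N_0+\pi/2$, and symmetrically $\boldsymbol\theta\ge-2\pi N_0-\pi/2$ on $\bB_5\cap\operatorname{Reg}_iT$.
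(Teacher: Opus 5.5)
Your overall plan matches the paper's. You trap $\spt T_N$ near $L$, use a cylinder tangent to $L$ whose cross-section is slightly dilated near the cut-off so that it swallows the boundary slice, and apply Proposition~\ref{corollary:mean convex barriers} to $T_N$ cut off by a sphere; your $U$ is essentially the paper's $S_\epsilon$. The genuine gap is in Step~1. You try to get $\sim N$ sheets at distance $\sim\epsilon$ from $L$ out of a single point $x_N$ with $|\mathbf q(x_N)|\ge\epsilon$ and $\boldsymbol\theta(x_N)>2\pi N$, by following a path from the base point. That path can do all of its winding at distance $\ll\epsilon$ from $L$ and then reach $x_N$ along a single sheet. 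Your coarea repair only assigns mass to full turns that actually occur at distance $\ge\epsilon/2$, and there may be none. So for fixed $N$ nothing forces mass growing with $N$, and there is no contradiction with Lemma~\ref{lemma:local upper bf bounds}. (Also, bands $(2\pi k,2\pi(k+1)]$ do not separate sheets: a connected sheet crossing $M_1$ lies in two of them.)

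The missing idea is to use the whole sequence and compactness, then argue locally at one point, as the paper does. Take $x_N\in\spt T_N\cap\overline{\bB_8}$ with ${\rm dist}(x_N,L)\ge\epsilon$. A subsequence converges to some $x_\infty\in\spt T$ with ${\rm dist}(x_\infty,L)\ge\epsilon$, and $\bB_{\epsilon/2}(x_\infty)$ lies in the open half-space $H_s$, where $s=\mathbf q(x_\infty)/|\mathbf q(x_\infty)|$. On this ball $\boldsymbol\theta$ never equals $\arg s\pm\pi/2$ modulo $2\pi$. Hence the pieces $S_z$ of the regular set where $\boldsymbol\theta\in(\arg s-\pi/2,\arg s+\pi/2)+2\pi z$ are relatively clopen, so they are boundaryless in the ball, pairwise disjoint, and $\bF$-minimizing there by subadditivity. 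Interior lower density bounds give each $S_z$ meeting $\bB_{\epsilon/4}(x_\infty)$ mass $\gtrsim\epsilon^m$ in $\bB_{\epsilon/2}(x_\infty)$. Since $\sum_z\bM(S_z)\le\bM(T\res\bB_{\epsilon/2}(x_\infty))<\infty$, only finitely many $S_z$ meet $\bB_{\epsilon/4}(x_\infty)$, so $\boldsymbol\theta$ is bounded there. This contradicts $\boldsymbol\theta>2\pi N$ at regular points arbitrarily close to $x_N\to x_\infty$; no path-following is needed. The argument only works on compact subsets of $\bB_{10}$, which is why the paper cuts $T_N$ at a generic $r\in(6.5,7)$ (where the dilated barrier contains $L$ in its interior) rather than at $\partial\bB_{10}$ as you suggest.

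Your Step~3 also fails as written. The path from the base point can leave $\bB_5$, where no containment in $\overline{H_1}$ is available, so the winding may happen there. The paper asserts this consequence without detail. One way to close it is to rerun the theorem for $T$ rotated about $L$ by $\pi/2$, $\pi$, $3\pi/2$ (still under Assumption~\ref{assumption:tmx}, for the rotated integrand and with rotated base points). This places every regular point of $\bB_5$ with $\boldsymbol\theta>2\pi N+3\pi/2$ in $\overline{H_1}\cap\overline{H_i}\cap\overline{H_{-1}}\cap\overline{H_{-i}}=L$, which contains no regular points.
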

\begin{proof}
The proof is divided into steps. We prove it for $T_N$ only since the argument for $T_{-N}$ is entirely analogous. 

\medskip

In Step 1, we will show that the supports of $T_N$ and $T_{-N}$ are trapped in a small neighborhood of the boundary $\R^{m-1}\times \{0\}^2$. More precisely, we show that for any {$0<\delta<1/2$}, there is $N_0\in \mathbb{N}$ such that
\begin{equation}\label{eqn: trapping in small nghd}
    \spt(T_N \cup T_{-N}) \cap \overline{\bB_{8}} \subset B_{\delta}(\R^{m-1}\times \{0\}^2) \qquad \forall N \geq N_0.
\end{equation}
In Step 2, we construct an anisotropic mean convex smooth open set $U$ such that $U\cap \bB_{{5}} \subset H_1$ and, for any $N>N_0$, we use Step 1 to show 
\[\spt(\partial (T_N\res\bB_{5})) \subset \overline{U}\text{ and }\spt(T_N\res\bB_{5}) \subset \overline{B_\delta(U)}\,.\]
For $\delta$ small enough depending on $U$ and $N>N_0$, the last displayed equation allows us to apply Proposition \ref{corollary:mean convex barriers} to $S = T_N\res \bB_{5}$, which yields $\spt( T_N\res \bB_{5})\subset \overline{U}$. {Since $\overline{U}\cap\bB_5 \subset \overline{H_1}$, this implies that $\boldsymbol{\theta}\leq 2\pi N +\pi/2$ on $\bB_5$.} This will conclude the proof of the theorem.

\noindent{\it Step 1}: It is enough to prove the claim for $T_N$; the proof for $T_{-N}$ is the same. By way of contradiction, assume \eqref{eqn: trapping in small nghd} fails, that is, there is $0<\delta<1/2$ such that for any $N$ large enough there exists
\begin{equation}\label{eqn: trapping fails}
        x_N \in \spt(T_N) \cap \left(\overline{\bB_{8}}\setminus  B_{\delta}(\R^{m-1}\times \{0\}^2)\right).
\end{equation}
Since the set in \eqref{eqn: trapping fails} is contained in the compact set $ \spt(T) \cap \left(\overline{\bB_{8}}\setminus  B_{\delta}(\R^{m-1}\times \{0\}^2)\right)$, then up to a subsequence, there is a limit $x_\infty \in  \spt(T) \cap \left(\overline{\bB_{8}}\setminus  B_{\delta}(\R^{m-1}\times \{0\}^2)\right)$ of the sequence $\{x_N\}_N$. Up to a rotation, we may assume that $x_\infty \in M_1$. Observe that, since $\bB_{\delta/2}(x_\infty)\subset H_1$, we get
$$\boldsymbol{\theta}(\bB_{\delta/2}(x_\infty)) \subset \cup_{z\in \mathbb{Z}}\left(-\pi/2+2\pi z, \pi/2 + 2\pi z\right).$$ 
Letting $S_z = \a{\{ x\in \bB_{\delta/2}(x_\infty)\cap {\rm Reg}_i\, T : \boldsymbol{\theta}(x) \in (-\pi/2+2\pi z, \pi/2+ 2\pi z)\}}$, note that by definition, ${\rm Reg}_i S_z \cap {\rm Reg}_i S_{z'} = \varnothing$ if $z\neq z'$. In particular, $\sum_z \bF(S_z) = \bF(T \res{\bB_{\delta/2}(x_\infty)})$, and each $S_z$ is $\bF$-minimizing on $\bB_{\delta/2}(x_\infty)$. 

{Up to relabelling, let $\{S_{z_j}\}_{j=1}^J$ be those currents such that $\spt S_{z_j} \cap \bB_{\delta/4}(x_\infty)\neq \varnothing$.} We claim that $J$ is finite. Indeed, by the lower density bounds (see e.g.~ \cite[Lemma 4]{bombieri1982regularity}) applied to each $S_{z_j}$, we may estimate
\begin{equation*}
    +\infty > \Lambda |T|(\bB_{\delta/2}(x_\infty)) \geq \bF(T\res \bB_{\delta/2}(x_\infty)) \geq \sum_{j=1}^{J} \bF(S_{z_j};\bB_{\delta/4}(x_\infty)) \geq c_0{\frac{\delta^m}{4^m}}\sum_{j=1}^{J}1\,.
\end{equation*}
This implies that $J<+\infty$, {and as a consequence that $\boldsymbol{\theta} \leq \pi/2 + 2\pi \max_j z_j$ on $\bB_{\delta/4}(x_\infty)$}. 
But this contradicts $x_N \in \spt T_N$ for large enough $N$. 

\noindent{\it Step two}: Fix small $\epsilon \in (0,1)$ to be chosen later. Consider the set
\begin{equation}\notag
  C_\epsilon = \left(B^{m-1}_{{7}}\times B_\epsilon^{2} \right)\cup D_\epsilon ,
\end{equation}
where $B^{m-1}_{7}$ and $B_\epsilon^2$ denote the balls of radius ${7}$ and $\epsilon$ in $\R^{m-1}$ and $\mathbb{R}^2$, respectively, and we choose the ends $D_\epsilon$ so that $C_\epsilon$ is $C^\infty$, {convex}, and the anisotropic mean curvatures $H_{\partial C_\epsilon}^F$ and $H_{\partial C_\epsilon}^G$ (where $G(\nu)=F(-\nu)$) are of order $1/\epsilon$ for all small $\epsilon$. 

Now we compose $C_\epsilon$ with a dilation map that stretches the cylinder {in the $(x_m,x_{m+1})$ directions} for ${|(x_1,\dots,x_{m-1})|>6}$. Specifically, let $\varphi \in C^\infty(\mathbb{R};[0,1])$ be such that $\varphi\equiv 0$ on $[0,{6}]$ and $\varphi=1$ on $[{6.5},\infty)$. For $x = (x^\prime, y) \in \R^{m-1}\times \R^2$, we define a dilation map and the mean convex barrier set as
\begin{equation*}
   d_\epsilon(x) := (x^\prime, (1+\epsilon^2\varphi(|x^\prime|))y) \qquad \text{ and }\qquad S_\epsilon := d_\epsilon(C_\epsilon) + (0_{m-1},\epsilon,0).
\end{equation*}
For all $\epsilon$ small enough, $S_\epsilon$ is anisotropic mean convex and $\bG$-mean convex since $C_\epsilon$ is convex with anisotropic mean curvature order $1/\epsilon$ and $d_\epsilon$ is ${\epsilon^2}$-close to the identity. Choosing such an $\epsilon$, from the definitions, we see that
\begin{equation*}
(\R^{m-1}\times \{0\}^2){\cap} \bB_{{6}}\subset \partial S_\epsilon\,.
\end{equation*}
Furthermore, we claim that due to the dilation (which fattened $S_\epsilon$ near its ``ends''), $S_\epsilon$ satisfies
\begin{equation}\label{eqn: Seps contains bdr points}
    \partial B_{r}^{{m-1}}\times \{(0,0)\} \subset\!\subset S_\epsilon\qquad {\forall r\in (6.5,7)}\,.    
\end{equation} 
Indeed, given $z'\in\partial B^{m-1}_{r}$, let $x:= (z', -\frac{\epsilon}{1+{\epsilon^2}}, 0){\in C_\epsilon}$, {and note that ${\rm dist}(x,\partial C_\epsilon)>0$}. Since ${\epsilon^2\varphi({|z'|})=\epsilon^2}$, we thus have ${(z',0,0)} = d_\epsilon(x) + (0_{m-1}, \epsilon, 0)$ and ${{\rm dist}({(z',0,0)},\partial S_\epsilon)>0}$. {Lastly, before moving on to the barrier argument, we fix $\delta_0(S_\epsilon)$ as in Proposition \ref{corollary:mean convex barriers}.}

Moving on to the barrier argument, by slicing theory, for almost every $r>0$, we know that $\partial (T_N\res\bB_{r}) = \langle T_N, |\cdot|, r\rangle + (\partial T_N)\res \bB_{r}$ for every $N$. {We fix such an $r\in (6.5,7)$.} By Step 1 applied with any $\delta <{\min\{\epsilon^2,{\rm dist}(\partial B_r^{m-1}\times \{0\}^2,\partial S_\epsilon)/2,\delta_0\}}$ and \eqref{eqn: Seps contains bdr points}, it follows that {there is $N_0$ such that for all $N \geq N_0$,}
\begin{equation*}
    \spt(\langle T_N, |\cdot|, r\rangle) \subset \partial \bB_{r} \cap B_\delta (\R^{m-1}\times \{0\}^2) \subset S_\epsilon\,.
\end{equation*}
Since $\spt (\partial T_N)\subset \overline{M_1}{\cap B_{\delta}(\R^{m-1}\times \{0\}^2)}$, according to Lemma \ref{lemma: basic prop T N}, and by Step 1, we obtain 
\begin{equation}\notag
\spt((\partial T_N)\res\bB_{r})\subset \overline{M_1}{\cap B_{\delta}(\R^{m-1}\times \{0\}^2)}\cap \bB_{r}\subset \overline{S_\epsilon}  
\end{equation}
since {$\delta \leq \epsilon^2 \leq 2\epsilon$, where $2\epsilon$ is the ``width'' of $S_\epsilon \cap \bB_r$ in the $(x_m,x_{m+1})$ directions}. We have then proved 
\begin{equation}\label{eq:boundary contained in barrier}
\spt(\partial (T_N\res\bB_{r})) \subset \overline{S_\epsilon}\qquad \mbox{for all $N\geq N_0$}.
\end{equation}
We also notice that {since $(\mathbb{R}^{m-1}\times \{0\}^2) \cap \bB_r \subset \overline{S_\epsilon}$,} the application of Step 1 directly implies that
\begin{equation}\label{eq:distance from barrier}
 \spt(T_N\res\bB_{r})\subset \overline{B_{\delta_0}(S_\epsilon)}\,.    
\end{equation}
Equations \eqref{eq:boundary contained in barrier}-\eqref{eq:distance from barrier} allow for the application of Proposition \ref{corollary:mean convex barriers} to $T_N \res B_r$, yielding 
\begin{equation}\notag
  \spt (T_N \res \bB_r) \subset \overline{S_\epsilon}=: \overline{U}\,.  
\end{equation}
Since $r>5$ and $\overline{S_\epsilon} \cap \bB_5 \subset \overline{H_1}$, this concludes the proof of \eqref{eq:barrier containment equation in proposition}.
\end{proof}

\section{Existence of a flat blowup}

The goal of this section is to prove Theorem \ref{thm:main existence theorem intro}. 

\begin{remark}[Reduction of Theorem \ref{thm:main existence theorem intro} to the flat case]\label{remark:reduction to flat case}
We observe that a simple argument shows that up to a rotation, any blowup of $T$ is a non-trivial $\bF$-minimizer on $\mathbb{R}^{m+1}$ with flat boundary $\a{\mathbb{R}^{m-1}\times \{0\}^2}$. Indeed, if a blowup $T_0$ of $T$ is of mass zero, then we must have $\partial T_0 =0$ contradicting that the blowup limit of the boundary is flat, which follows from the fact that the boundary is differentiable at $0$. Therefore, we can work under Assumption \ref{assumption:tmx}.
\end{remark}

We will need one more definition, which describes a family of cylinders that are rotations of each other around the boundary $\mathbb{R}^{m-1}$ and all of which contains a portion of the boundary.

\begin{definition}[Cylinders tangent to the boundary]\label{def:cylinders}
    Let $C\subset \mathbb{R}^m$ be a smooth, convex, open set such that $\partial C$ is diffeomorphic to $\mathbb{S}^{m-1}$, and
    \begin{align*}
    &\{ x\in \mathbb{R}^m : |x|<1/2, \, x_m>0\}\subset C \subset \{x \in \mathbb{R}^m :|x|<2,\, x_{m}>0\}\,\qquad \mbox{and}\\ \notag
      &B_{1}^{m-1}(0) \times \{0\}\subset \partial C\,.  
    \end{align*}
For $s\in \mathbb{S}^1$ and $t>0$, we define
\begin{equation}\notag
  C_{s,t} = t\left[ R_s\left( C \times (-1,1)  \right)\right]  \,,
\end{equation}
where {$R_s$ is the rotation in $\mathbb{R}^{m+1}$ which fixes $(x_1,\dots,x_{m-1})$ and sends $(x_m, x_{m+1})\sim re^{i\theta}$ to $sre^{i\theta}$.}
\end{definition}

\begin{theorem}[Flat blowups]\label{thm:ex and uni blowup section}
If $T$ satisfies Assumption \ref{assumption:tmx}, then there exists a half-hyperplane $M_s$ and $Q\in \mathbb{N}$ such that a blowup of $T$ at $0$ is given by $Q\a{M_s} - (Q-1)\a{M_{-s}}$.
\end{theorem}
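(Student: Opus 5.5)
The plan follows the outline in Section \ref{subsec: intro explanation BU argument}: a minimal-angle/Hopf argument that produces half-hyperplanes inside blowups, an iteration of further blowups, and the two-dimensional classification of Lemma \ref{lemma: resolution in 2d}.

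\emph{Setup.} By Theorem \ref{lemma:trapping lemma}, $\boldsymbol{\theta}$ is bounded on $\bB_5\cap{\rm Reg}_i T$. Hence for $r\in(0,1)$ the quantity
\[
\boldsymbol{\theta}_r:=\inf\{\boldsymbol{\theta}(x):x\in\bB_r\cap{\rm Reg}_i T\}
\]
is finite and nondecreasing as $r\searrow 0$, so $\boldsymbol{\theta}_{\min}=\lim_{r\to0}\boldsymbol{\theta}_r$ exists. Let $s_{\min}=e^{i\boldsymbol{\theta}_{\min}}$.

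By Lemma \ref{lemma:local upper bf bounds}, there are $r_j\to0$ with $T_{0,r_j}\to T_0$. Here $T_0$ is an $\bF$-minimizer with boundary $\a{\R^{m-1}\times\{0\}^2}$, and its multiplicity is locally bounded by the same $Q_0$.

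\emph{Step 1: $M_{s_{\min}}\subset\spt T_0$.} After rotating, take $\boldsymbol{\theta}_{\min}=0$. Suppose, for contradiction, that some $x_0\in M_1\setminus\spt T_0$ exists.

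First I would show that, for small $r$, the component of ${\rm Reg}_i T\cap\bB_r\cap H_{e^{i\boldsymbol{\theta}_r}}$ on which $\boldsymbol{\theta}$ comes close to $\boldsymbol{\theta}_r$ lies on one side of $M_{e^{i\boldsymbol{\theta}_r}}$. It cannot cross to the other side, since there $\boldsymbol{\theta}$ would take values below $\boldsymbol{\theta}_r$.

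After rescaling to $T_{0,r}$ and composing with a rotation by $\boldsymbol{\theta}_r-\boldsymbol{\theta}_{\min}\to 0$, this component becomes, near $x_0$, a current sitting in $\{x_{m+1}\ge L(x')\}$ over a cylinder domain, where $L$ is linear with slope $\tan(\boldsymbol{\theta}_r)\to 0$. Concretely, I would take a cylinder $C_{1,t}$ as in Definition \ref{def:cylinders}, with flat boundary face on $L$ and $x_0$ on the curved part, and straighten so that the domain $\Omega$ is star-shaped. For large $j$, the convergence $T_{0,r_j}\to T_0$ gives ${\rm dist}(x_0,\spt)\ge r_0/2$ uniformly.

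Proposition \ref{lemma:hopf currents} then applies uniformly in $r$: the relevant part of $\spt T_{0,r}$ lies above $\graph u$, and $u$ leaves the boundary $L$ with slope at least $\tau_0$ above $M_{e^{i\boldsymbol{\theta}_r}}$. Here $\tau_0$ depends only on $r_0$. Scaling back to $T$, every point of that component in a smaller ball has $\boldsymbol{\theta}\ge\boldsymbol{\theta}_r+c(\tau_0)$. Combined with the definition of $\boldsymbol{\theta}_{r'}$ for $r'<r$ and $\boldsymbol{\theta}_r\to\boldsymbol{\theta}_{\min}$, this yields a contradiction.

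I expect this step to be the main obstacle. The hardest points are:
\begin{itemize}
\item matching the geometry so that the hypotheses \eqref{eq:epigraph containment}--\eqref{eq:xnought rnought away from T} hold, in particular that only the $\boldsymbol{\theta}$-minimizing components (not other sheets) are trapped above $L$;
\item handling the possibility that the infimum is approached only along a sequence of components.
\end{itemize}
To address both, I would apply the Hopf lemma to the current $\a{\{\boldsymbol{\theta}<\boldsymbol{\theta}_r+\pi/2\}}$ restricted to the cylinder. Its boundary inside the cylinder is empty away from $L$ by the reasoning of Lemma \ref{lemma: basic prop T N}, and it is an $\bF$-minimizer.

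\emph{Step 2: iteration.} Blowups of $T_0$ at $0$ are again blowups of $T$; this follows by a diagonal argument. Apply Step 1 to $T_0$, with the polar angle function of $T_0$, to get a half-hyperplane in its blowups. Then restrict to the remaining sheets: subtract $\a{M_{s_{\min}}}$ with its multiplicity, or consider $\boldsymbol{\theta}$ restricted above the first sheet, and repeat to find the next extremal angle.

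Each new half-hyperplane adds density at least a fixed amount, while the upper mass ratio bound from Lemma \ref{lemma:local upper bf bounds} caps the total density. So the process terminates after finitely many iterations. The result is a blowup $T_\infty$ that is a finite integer combination of half-hyperplanes $\a{M_{s_i}}$, invariant along $\R^{m-1}$.

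\emph{Step 3: conclusion.} Slicing $T_\infty$ orthogonally to $\R^{m-1}$ gives a one-dimensional conical $\bF$-type minimizer in $\R^2$ with boundary $\a{0}$ ($d=1$). Minimality of the slice follows from the translation invariance of $T_\infty$ in the $\R^{m-1}$ directions. Lemma \ref{lemma: resolution in 2d} with $d=1$ then gives $Q\a{\ell^+}+(Q-1)\a{\ell^-}$. Writing $\ell^+=R_s$, this is $T_\infty=Q\a{M_s}-(Q-1)\a{M_{-s}}$ (the sign reflects the orientation convention $\partial\a{M_{\pm s}}=\a{L}$).
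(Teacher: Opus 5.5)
Your Steps 1 and 3 follow the paper's proof. Step 1 is its argument that every blowup contains $M_1$: restrict $T$ to the sheets with $\boldsymbol{\theta}$ below a cutoff inside a cylinder tangent to $L$, trap them above a plane of small slope, and use that the constants of Proposition \ref{lemma:hopf currents} are uniform in the slope and in $x_0$. Step 3 is Lemma \ref{lemma: resolution in 2d}.

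Step 1 needs two small repairs. First, with the unrotated cylinder $C_{1,t}\subset H_1$ the cutoff must be the fixed level $\boldsymbol{\theta}_{\min}+\pi/2$, whose level set lies in $\{x_m=0\}$ and misses the open cylinder. When $\boldsymbol{\theta}_r<\boldsymbol{\theta}_{\min}$, the level set of $\boldsymbol{\theta}_r+\pi/2$ is a half-hyperplane cutting through $C_{1,t}$. Your restricted current can then have boundary inside the cylinder, which violates the hypotheses of Proposition \ref{lemma:hopf currents}. Alternatively, rotate the cylinder together with the cutoff and take the linear function to be $0$. Second, $x_0$ need not lie in $\bB_1$, so the one-sided trapping of $T_{0,r_j}$ must come from the infimum over $\bB_{Kr_j}$ for a fixed large $K$ (or from the paper's cylindrical infimum), not from $\boldsymbol{\theta}_{r_j}$.

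The genuine gap is in Step 2. Your termination argument (``each new half-hyperplane adds density at least a fixed amount'') assumes two things: that the next extremal angle of the remaining sheets differs from the angles already found, and that the remaining sheets do not charge $M_1$ in the next blowup. Nothing in your proposal excludes $T_0-z\a{M_1}$ accumulating tangentially onto $M_1$ at $0$. In that case the ``next extremal angle'' is again that of $M_1$, and Step 1 produces no new half-hyperplane. Worse, the blowup of the remainder may add multiplicity on $M_1$, possibly with the opposite orientation. So the density count is not monotone and the iteration need not make progress.

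The paper closes this with the separation claim \eqref{eq:restricted theta range}, which is not a repetition of Step 1. By Lemma \ref{lemma:strong max principle}, a one-sided piece of the remainder cannot touch $M_1$ away from $L$. If it did, it would carry a nonzero multiple of a piece of that hyperplane, which is impossible since it is concentrated off $M_1$. Hence any point of $M_1$ on the curved boundary of a large cylinder $C_{1,R}$ is at positive distance from the remainder. This supplies the point $x_0$ for Proposition \ref{lemma:hopf currents}, now applied with linear function $0$ directly to $T_0$ rather than along a blowup sequence. The conclusion confines the remainder near $0$ to a fixed wedge at angular distance at least $\delta>0$ from $M_1$ on both sides; a reflection handles the lower side.

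Because this wedge is a cone, every further blowup keeps multiplicity exactly $z$ on $M_1$. The new extremal half-hyperplanes have angles in $[\delta,2\pi-\delta]$, so they are genuinely new. The same separation must be re-established in every sector between consecutive half-hyperplanes at each stage. Only then does the mass bound of Lemma \ref{lemma:local upper bf bounds} force the iteration to stop at a current that is exactly a finite sum of half-hyperplanes with constant multiplicities, to which your Step 3 applies.
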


\begin{proof}
   The proof is divided into steps. 

\medskip

   \noindent{\it Set-up.} By Theorem \ref{lemma:trapping lemma}, we know that $\boldsymbol{\theta}$ is bounded on $\bB_5\cap{\rm Reg}_i\, T$. We set
   \begin{equation}\notag
   \boldsymbol{\theta}_r:=\inf \{\boldsymbol{\theta}(x) : x\in \bB_r \cap {\rm Reg}_i T  \}  \qquad \mbox{and}\qquad \boldsymbol{\theta}_{\rm min} = \lim_{r\searrow 0}  \boldsymbol{\theta}_r;
   \end{equation}
   note that the limit exists since $\boldsymbol{\theta}$ is bounded and $\boldsymbol{\theta}_r$ is a decreasing function of $r$. Up to a rotation, we may assume that $\boldsymbol{\theta}_{\rm min}\in 2\pi \mathbb{Z}$. Our candidate tangent currents are thus
\begin{equation}\notag
    Q \a{M_{1}}- (Q-1) \a{M_{-1}}\qquad \mbox{and}\qquad Q \a{M_{-1}}- (Q-1) \a{M_{1}} \qquad \qquad Q \in \mathbb{N}\,,
\end{equation}
that is, those flat cones which contain $ M_{1}=M_{e^{i\boldsymbol{\theta}_{\rm min}}}$ in their supports.

\medskip
   
\noindent{\it Alternative characterization of $\boldsymbol{\theta}_{\rm min}$}. The goal of this step is to obtain $\boldsymbol{\theta}_{\rm min}$ as a limit over nested cylinders tangent to the boundary at the origin rather than nested balls containing the origin. 
   
We begin by observing that for $r$ small enough such that
   \begin{equation}\notag
       \inf \{\boldsymbol{\theta}(x) : x\in \bB_r \cap {\rm Reg}_i T  \}> \boldsymbol{\theta}_{\rm min} - \pi/4 \,,
   \end{equation}
we may assume that a sequence $\{x_j^r\}_j$ with polar angles converging to the infimum on $\bB_r \cap {\rm Reg}_i T$ is chosen such that
\begin{equation}\notag
    \{\boldsymbol{\theta}(x_j^r)\}_j\subset (\boldsymbol{\theta}_{\rm min}-\pi/4,\boldsymbol{\theta}_{\rm min}]\,;
\end{equation}
in other words, for all small $r$, the optimal sequences are contained in the halfspace $H_1$. With this and recalling the definition of the cylinders $C_{s,t}$ (Definition \ref{def:cylinders}), we obtain
\begin{equation}\label{eq:cyl and round min comparison}
\begin{aligned}
       \inf \{\boldsymbol{\theta}(x) : x\in \bB_{2r} \cap {\rm Reg}_i T  \} &\leq  \inf \{\boldsymbol{\theta}(x) : x\in C_{1,r} \cap {\rm Reg}_i T  \}  \\
       &{\leq \inf \{\boldsymbol{\theta}(x) : x\in H_1\cap\bB_{r/2} \cap {\rm Reg}_i T  \}}\\
       &= \inf \{\boldsymbol{\theta}(x) : x\in \bB_{r/2} \cap {\rm Reg}_i T  \} \,.
\end{aligned}
\end{equation}

Since the quantity 
\begin{equation}\notag
   \boldsymbol{\theta}_{\rm cylindrical\,min}:= \lim_{r\searrow 0} \Big[\inf \{\boldsymbol{\theta}(x) : x\in C_{1,r} \cap {\rm Reg}_i T  \}    \Big]
\end{equation}
is also determined by an infimum which is decreasing in $r$, \eqref{eq:cyl and round min comparison} implies that
\begin{equation}\label{eq:cylindrical min equals round min}
    \boldsymbol{\theta}_{\rm cylindrical\,min} = \boldsymbol{\theta}_{\rm min}\in 2\pi \mathbb{Z}\,.
\end{equation}

\medskip

\noindent{\it Every blowup contains $M_{1}$}. Here we prove that for any sequence $r_j\searrow 0$ and blowup $T_0$ such that $T_{0,r_j}\to T_0$, there is $z\in \mathbb{N}$ such that 
\begin{equation}\label{eq:blowup contains theta min plane}
    M_{1} \subset \spt\, T_0 \qquad \mbox{and}\qquad \Theta_{T_0}^m \equiv z \mbox{ on $M_1$}\,.
\end{equation}

First, note that every point  $x\in {\rm Reg}_i T\cap C_{1,1}$ satisfies $\boldsymbol{\theta}(x) \in (-\pi/2,\pi/2) + 2\pi \mathbb{Z}$. Therefore, recalling that $\boldsymbol{\theta}_{\rm min}\in 2\pi \mathbb{Z}$, we can write ${\rm Reg}_i T\cap C_{1,1}=A_1 \cup A_2$, where
\begin{equation}\notag
  A_1 = \{x\in {\rm Reg}_i T\cap C_{1,1} : \boldsymbol{\theta}(x)< \boldsymbol{\theta}_{\rm min}+ \pi/2 \}\,, \qquad   A_2 = \{x\in {\rm Reg}_i T\cap C_{1,1} : \boldsymbol{\theta}(x)> \boldsymbol{\theta}_{\rm min}+ 3\pi/2 \}\,. 
\end{equation}
Define the $\bF$-minimizers $S_k := T_k\res A_k$ for $k=1,2$ whose support is contained in $\overline{C_{1,1}}$. 
Up to an unrelabelled subsequence, let us assume that $(S_1)_{0,r_j}$ converges to some $\bF$-minimizer $S_0$. Since the outward normal vector to $\partial C_{1,1}$ at $0$ is $-e_m$, then the blowup of $C_{1,1}$ at the origin is the halfspace $H_1$. Thus $S_0$ is an $\bF$-minimizer in $H_1$.

Next, recalling that $H_i$ is the open halfspace with boundary $\{x_{m+1}=0\}$, we claim that 
\begin{equation}\label{eq:containment in quadrant}
\spt S_0 \subset \overline{H_1} \cap \overline{H_{i}}\,.    
\end{equation}
The containment in $\overline{H}_1$ is already established above. 
For the containment in $\overline{H}_i$, we observe that, by definition,
\begin{equation}\label{eq:theta def inequality}
   \boldsymbol{\theta}_{\rm min}+\pi/2> \boldsymbol{\theta}(y) \geq \inf \{\boldsymbol{\theta}(x) : x\in C_{1,r} \cap {\rm Reg}_i S_1  \}=: \boldsymbol{\theta}_{r}^{\rm cyl}\geq \boldsymbol{\theta}_{\rm min}- \pi/2, \qquad \forall y\in C_{1,r} \cap {\rm Reg}_i S_1 \,.
\end{equation}
Phrased in terms of halfspaces, \eqref{eq:theta def inequality} says that
\begin{equation}\label{eq:theta def inequality halfspace version}
    C_{1,r} \cap {\rm Reg}_i S_1 \subset C_{1,r} \cap \overline{H_{e^{i(\boldsymbol{\theta}_{r}^{\rm cyl}+\pi/2)}}}\,.
\end{equation}
Recalling from \eqref{eq:cylindrical min equals round min} that $\boldsymbol{\theta}_r^{\rm cyl}\to \boldsymbol{\theta}_{\rm min}\in 2\pi \mathbb{Z}$ as $r\to 0$, taking $r\to 0$ yields
\begin{equation}\notag
   \spt \, S_0 \subset \overline{H_{e^{i(\boldsymbol{\theta}_{\rm min}+\pi/2)}}}=\overline{H_i}\,.
\end{equation}

Now we assume for contradiction that 
\begin{equation}\label{eq:m1 not in the support contra hyp}
    M_1 \not\subset \spt T_0\,.
\end{equation}
Since $\spt S_0\subset \spt T_0$, then there exists $(x_0',0) \in M_1 \setminus \spt S_0$ with
\begin{equation}\label{eq:xnought far from spt snought}
    {\rm dist}((x_0',0), \spt S_0) =: r_0>0.
\end{equation}
Let $r_1$ be such that $(x_0',0)\in \partial C_{1,r_1}$. We claim that for all large $j$, we may apply Proposition \ref{lemma:hopf currents} to $(S_1)_{0,r_j}$ on $C_{1,r_1}$. Indeed, by \eqref{eq:theta def inequality halfspace version}, for $j$ large, we have 
\begin{equation}\label{eq:epigraph containment blowup proof}
    \spt \, \big[(S_1)_{0,r_j}\res{C_{1,r_1}}\big]\subset \{(x',x_{m+1}) \in \overline{C_{1,r_1}} : x_{m+1} \geq L_j(x') \}\,,
    \end{equation}
for some linear functions $L_j$'s which converge uniformly to zero. Moreover, since $\spt (S_1)_{0,r_j}$ converge in the local Hausdorff sense to $\spt S_0$, \eqref{eq:xnought far from spt snought} and the uniform convergence of $L_j$ to $0$ imply that, for large $j$, 
\begin{equation}\label{eq:xnought far from spt S1/rj}
    {\rm dist}((x_0',L_j(x_0')), \spt (S_1)_{0,r_j}) \geq r_0/2\,.
\end{equation}

Together with the convergence of $L_j$ to $0$, \eqref{eq:epigraph containment blowup proof} and \eqref{eq:xnought far from spt S1/rj} allow for the application of Proposition \ref{lemma:hopf currents}. Then the conclusions \eqref{eq:hopf for u}-\eqref{eq:doesnt hit subgraph} of the proposition imply that there is $\theta_0>\pi/2$ and $0<r_2<r_1$ such that 
\begin{equation*}
    \spt (S_1)_{0,r_j} \cap C_{1,r_2} \subset \overline{H_{e^{i\theta_0}}}\,;
\end{equation*}
in words, $\spt (S_1)_{0,r_j} \cap C_{1,r_2}$ is contained in a wedge with boundary coming in to the origin with angle strictly greater than $0$. Using that $\boldsymbol{\theta}_{\rm min}\in 2\pi \mathbb{Z}$, we get existence of $\delta>0$ such that
\begin{equation}\notag
\boldsymbol{\theta}(y)>\boldsymbol{\theta}_{\rm min} + \delta    \qquad \forall y\in \spt S_1 \cap C_{1,r_2r_j}\,.
\end{equation}
By definition of $A_1$ and $S_1$, the last displayed inequality contradicts \eqref{eq:cylindrical min equals round min}. Thus \eqref{eq:m1 not in the support contra hyp} is false, that is $M_1 \subset \spt T_0$. 
By Lemma \ref{lemma:strong max principle} (which applies due to \eqref{eq:containment in quadrant}), we deduce that the multiplicity of $T_0$ is constant on $M_1$, which finishes the proof of \eqref{eq:blowup contains theta min plane}.

\medskip

\noindent{\it Blowups containing $M_1$ in their support have flat blowups.} Here, we prove the following. Let $T_0$ be a blowup of $T$ at $0$ such that $M_1\subset \spt\, T_0$, then any blowup $T_{00}$ of $T_0$ at the origin satisfies 
\begin{equation*}
   \spt T_{00} \subset \partial H_i\,.
\end{equation*}

First, we observe that \eqref{eq:blowup contains theta min plane} implies that $M_1$ is a connected component of ${\rm Reg}_i T_0$, and so no other connected component of ${\rm Reg}_i T_0$ can intersect it by Lemma \ref{lemma:strong max principle} and interior regularity. As a consequence, if we define as in Definition \ref{def:polar angle} a polar angle function $\boldsymbol{\theta}_{T_0}$ for the blowup with domain in ${\rm Reg}_i T_0$, it can never take a value in $2\pi \mathbb{Z}$. Therefore, $\boldsymbol{\theta}_{T_0}$ is globally bounded, and we may as well assume that it takes values off of $M_1$ in the open interval $(0,2\pi)$. 

We claim that there is $\delta\in (0,\pi/2)$ such that 
\begin{equation}\label{eq:restricted theta range}
    \boldsymbol{\theta}_{T_0}(\bB_1 \cap {\rm Reg}_i T_0 \setminus M_1) \subset (\delta, 2\pi - \delta)\,.
\end{equation}
To see this, let $R$ be chosen large enough so that $C_{1,R}$ contains $\bB_1 \cap \{x_m>0\}$. Letting $z$ be as in \eqref{eq:blowup contains theta min plane}, we split $(T_0-z\a{M_1})\res C_{1,R}$ into two pieces: $T_1 := (T_0-z\a{M_1})\res C_{1,R}\cap \{\boldsymbol{\theta}_{T_0} \in (0,\pi/2)\}$, that is, the part of $( T_0- z\a{M_1})\res C_{1,R}$ living above $M_1$, and $T_2$ is the restriction to those points with $\boldsymbol{\theta}_{T_0}\in (3\pi/2,2\pi)$. Since $T_0 - z\a{M_1}$ can never take polar angle value in $2\pi \mathbb{Z}$, the conditions \eqref{eq:epigraph containment} and \eqref{eq:xnought rnought away from T} are satisfied by $T_1$ with $L=0$. Thus \eqref{eq:hopf for u}-\eqref{eq:doesnt hit subgraph} yield $\delta_1>0$ such that $\boldsymbol{\theta}_{T_0}>\delta_1$ on ${\rm Reg}_i T_1$, and a reflection argument and \eqref{eq:hopf for u}-\eqref{eq:doesnt hit subgraph} yield the corresponding upper bound $\boldsymbol{\theta}_{T_0} < 2\pi - \delta_2$ on ${\rm Reg}_i T_2$ for some $\delta_2>0$. Letting $\delta=\min\{\delta_1,\delta_2\}$ yields \eqref{eq:restricted theta range}.

Next, we perform a similar blowup procedure as in the previous step. 
For $r<1$, we define
\begin{align}\notag
    \boldsymbol{\theta}_r^{\rm inf} &= \inf \{\boldsymbol{\theta}_{T_0}(y) : y\in {\rm Reg}_i (T_0- z\a{M_1} ) \cap \bB_r\}\geq \delta \qquad \mbox{ and}\\ \notag
    \boldsymbol{\theta}_r^{\rm sup} &= \sup \{ \boldsymbol{\theta}_{T_0}(y) : y\in {\rm Reg}_i (T_0- z\a{M_1} \cap \bB_r\} \leq 2\pi - \delta\,.
\end{align}
Note that $\boldsymbol{\theta}_r^{\rm inf}$ is a decreasing function of $r$ and $\boldsymbol{\theta}_r^{\rm sup}$ is increasing, and so there are limits
\begin{equation*}
 \delta \leq \lim_{r\to 0}\boldsymbol{\theta}_r^{\rm inf}=:\boldsymbol{\theta}_{T_0}^{\rm min} \leq \boldsymbol{\theta}_{T_0}^{\rm max}:=\lim_{r\to 0}\boldsymbol{\theta}_r^{\rm sup}\leq 2\pi - \delta\,.
\end{equation*}
Arguing as in the previous step using Proposition \ref{lemma:hopf currents}, for any blowup $T_{00}$ of $T_0$ at $0$, it holds
\begin{equation}\notag
   M_1 \cup M_{e^{i\boldsymbol{\theta}_{T_0}^{\rm min}}} \cup M_{e^{i\boldsymbol{\theta}_{T_0}^{\rm max}}} \subset \spt T_{00}\,.
\end{equation}

We continue on iteratively, blowing up and identifying more and more halfplanes contained in further blowups. Since $T$, $T_0$, $T_{00}$, and so on all satisfy uniform upper mass bounds (Lemma \ref{lemma:local upper bf bounds}), this procedure must terminate after a finite number of blowups. The end result is a current which is supported on a finite number of oriented half hyperplanes and has constant multiplicity on each. By Lemma \ref{lemma: resolution in 2d}, this last blowup must be supported on the plane containing $M_1$ and this finishes the proof. \end{proof}

\begin{remark}
We notice that, if $0$ is a one-sided point for $T$, i.e., the density of $T$ at $0$ is close to $1/2$, we could have stopped the iteration in the very first step.
\end{remark}

\bibliographystyle{abbrv}
\bibliography{biblio}

\end{document}